\newtheorem{thm}{Theorem} 
\newtheorem{lem}{Lemma} 
\newtheorem{cor}{Corollary} 
\newtheorem{prop}{Proposition}
\newtheorem{rem}{Remark} 
\newtheorem{defi}{Definition}
\newcommand{\eps}{\varepsilon} 
\newcommand{\ep}{\epsilon} 
\newcommand{\R}{\mathbb{R}} 
\newcommand{\N}{\mathbb{N}} 
\newcommand{\Z}{\mathbb{Z}} 
\newcommand{\mb}{\mathversion{bold}} 
\newcommand{\mn}{\mathversion{normal}} 
\newcommand{\ds}{\displaystyle}
\newcommand{\Cg}{\mathcal{C}_\gamma}
\newcommand{\dP}{d_\mathcal{P}}
\def\rest{\hskip 1pt{\hbox to 10.8pt{\hfill \vrule height 7pt width 0.4pt depth 0pt\hbox{\vrule height 0.4pt width
7.6pt depth 0pt}\hfill}}}
\def\evalu{\hskip 1pt{\hbox to 2pt{\hfill \vrule height -6pt width 0.4pt depth 0pt}}} 
\def\barint{\mathop{\vrule width 6pt height 3 pt depth -2.5pt \kern -8.8pt \intop}}
\title{On Schr\"odinger maps from $T^1$ to $S^2$} \author{{\sc Robert  L. Jerrard}  \& {\sc Didier Smets} }
\date{}
\begin{document}
\maketitle

\begin{abstract} We prove an estimate for the difference of two solutions of the Schr\"odinger map equation for maps from $T^1$ to $S^2.$  This estimate yields some
 continuity properties of the flow map for the topology of $L^2(T^1,S^2)$, provided one takes its quotient by the 
continuous group action of $T^1$ given by translations. We also prove that without taking this quotient, for any
$t>0$ the flow  map at time $t$ is discontinuous as a map from $\mathcal{C}^\infty(T^1,S^2)$, equipped with the 
weak topology of $H^{1/2},$ to the space of
distributions $(\mathcal{C}^\infty(T^1,\R^3))^*.$ 
\end{abstract}

\section{Introduction}                                              %

We are interested in the Schr\"odinger map equation, 
\begin{equation}\label{eq:schrodimap} 
\partial_t u = \partial_s \left( u\times \partial_s u\right), 
\end{equation} 
where $u : I \times T^1 \to S^2$, $0\in I \subset \R$ is some open interval, $T^1 \simeq \R/2\pi\Z$ is the flat one
dimensional torus, $S^2$ is the standard unit sphere in $\R^3$,  
and $\times$ is the vector product in $\R^3.$ The solutions we consider are all at least in
$L^\infty(I,H^{1/2}(T^1,S^2))$, so that equation \eqref{eq:schrodimap} has a 
well-defined distributional meaning in $I \times T^1.$ Such solutions are necessarily continuous
with values into
$H^{1/2}_{\rm weak}(T^1,S^2)$, so that Cauchy problems are
well-defined too.

\medskip

Our main result provides an upper bound for the growth rate of the difference
between two solutions, one of which being required to be sufficiently smooth. 

\begin{thm}\label{thm:main} Let $u \in \mathcal{C}(I,H^3(T^1,S^2))$ be a solution of the Schr\"odinger map
equation \eqref{eq:schrodimap} on $I=(-T,T)$ for some $T>0.$  Given any other solution 
$v \in L^\infty(I,H^{1/2}(T^1,S^2))$ of \eqref{eq:schrodimap}, there exists a continuous  
function $\sigma\, :  I \to T^1$ such that for every $t\in I,$
\begin{equation*}\label{eq:contimod} 
\|v(t,\cdot) - u(t,\cdot + \sigma(t)) \|_{L^2(T^1,\,\R^3)} \leq C \|v(0,\cdot) - u(0,\cdot)\|_{L^2(T^1,\,\R^3)},
\end{equation*}
where the constant $C \equiv C(\|\partial_{sss}u(0,\cdot)\|_{L^2}, T)$, in particular $C$ does not depend on $v.$\footnote{See Section \ref{sect:th1} for a more explicit version of $C$.}
\end{thm}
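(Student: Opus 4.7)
The plan is to compare $v$ with a translated copy $u^\sigma(t,s) := u(t, s+\sigma(t))$ of $u$ by expressing $v$ in a moving orthonormal frame carried along $u^\sigma$. This recasts the problem as an $L^2$ estimate for a complex scalar function $z$ satisfying a Schr\"odinger-type equation; the modulation parameter $\sigma(t)$ is chosen to absorb the single neutral direction (the spatial translation zero mode) that would otherwise prevent closure of the estimate.

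Using $u \in \mathcal{C}(I,H^3)$, build a smooth orthonormal frame $(u, N_1, N_2)$ of $\R^3$ along $u$, gauged by parallel transport in $s$ so that $\partial_s N_j = -(\partial_s u \cdot N_j)\, u$. Define $u^\sigma, N_j^\sigma$ by translation in $s$ and decompose
\begin{equation*}
v = \alpha\, u^\sigma + \beta_1 N_1^\sigma + \beta_2 N_2^\sigma, \qquad z := \beta_1 + i\beta_2, \qquad \alpha = \sqrt{1-|z|^2}.
\end{equation*}
The identity $|v-u^\sigma|^2 = 2(1-\alpha)$ shows that $|v-u^\sigma|$ is pointwise comparable to $|z|$ as long as $|z|$ stays bounded away from $\sqrt{2}$. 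Using that $u^\sigma$ satisfies $\partial_t u^\sigma = u^\sigma \times \partial_{ss} u^\sigma + \dot\sigma\, \partial_s u^\sigma$, substituting the ansatz into \eqref{eq:schrodimap}, and projecting onto the complex line $N_1^\sigma + i N_2^\sigma$, one obtains an equation of the form
\begin{equation*}
i \partial_t z + \partial_{ss} z = F(u^\sigma, z, \bar z) - i \dot\sigma\, q^\sigma,
\end{equation*}
where $q^\sigma := \partial_s u^\sigma \cdot (N_1^\sigma + i N_2^\sigma)$ is the Hasimoto transform of $u^\sigma$, and $F$ groups semilinear contributions in $z$ together with quasilinear corrections of schematic form $|z|^2\partial_{ss}z$, with coefficients controlled by $\|u\|_{H^3}$.

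An $L^2$ estimate is then obtained by pairing with $\bar z$ and taking imaginary parts: the dispersive term $\mathrm{Im}\int \bar z\,\partial_{ss}z\, ds$ vanishes, the semilinear contributions give $C(\|u\|_{H^3})\|z\|_{L^2}^2$ after integration by parts, and the modulation term contributes $-\dot\sigma\,\mathrm{Im}\int \bar q^\sigma z\, ds$. Fix $\sigma$ by the ODE matching this term against the sole obstructive component of $F$, with $\sigma(0)=0$; the denominator $\|q^\sigma\|_{L^2}^2 = \|\partial_s u\|_{L^2}^2$ is constant in time (conservation of Dirichlet energy), so the ODE is solvable and $\sigma$ continuous on $I$. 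Gronwall then yields $\|z(t)\|_{L^2}\leq e^{CT}\|z(0)\|_{L^2}$, and the equivalence $|v-u^\sigma|^2 = 2(1-\alpha) \asymp |z|^2$ translates this into the required bound.

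The two principal obstacles are (i) the algebraic identification that the sole obstructive direction in $F$ lies along $q^\sigma$, so that a one-parameter modulation suffices to close the estimate (this reflects the Hasimoto-like structure specific to Schr\"odinger maps), and (ii) handling the quasilinear correction $|z|^2\partial_{ss}z$, which is not bounded by $\|z\|_{L^2}$ alone and will likely require a modified energy functional that cancels the offending term exactly. Rigorous justification of the formal manipulations for $v$ of limited regularity $L^\infty(H^{1/2})$ is achieved by a standard approximation argument on smoother data followed by passage to the limit.
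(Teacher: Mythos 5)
Your proposal takes a genuinely different route from the paper: you work entirely at the level of the Schr\"odinger map by expressing $v$ in a moving orthonormal frame attached to the translated reference solution $u^\sigma$, reducing to an NLS-type scalar $z$ and closing by modulation plus Gronwall. The paper instead descends to the primitive binormal-curvature-flow level, introduces the discrepancy functional $F_{\gamma,\sigma,r}(\Gamma)=\int 1 - f(d^2)\,\tau_\gamma(P(x))\cdot\tau_\Gamma\,d\mathcal{H}^1$ with an explicit cutoff $f$, and closes via a weak integral formula (Proposition \ref{prop:weakp}) and a pointwise bound (Proposition \ref{prop:core}). The authors explicitly note (Subsection \ref{sub:trium} and \ref{sub:Cauchy}) that they deliberately avoid Hasimoto/gauge-frame transforms; your proposal is essentially an attempt at the approach they chose not to take, and it is worth understanding why.

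There are three genuine gaps, and I do not think they can be patched without essentially abandoning the frame approach. First, the decomposition $v=\alpha u^\sigma+\beta_1N_1^\sigma+\beta_2N_2^\sigma$ with $\alpha=\sqrt{1-|z|^2}>0$ requires $v\cdot u^\sigma>0$ pointwise, and you need $|z|$ uniformly away from $1$ to keep $\alpha$ non-degenerate. But the hypothesis is only that $\|v(0,\cdot)-u(0,\cdot)\|_{L^2}$ is small; this permits $v(0,s)$ and $u(0,s)$ to be antipodal on a set of small measure, where your decomposition is not defined or $\alpha$ changes sign discontinuously. The cutoff $f(d^2)$ in the paper's $F$ is precisely the device that makes the discrepancy measure insensitive to such far-apart points, and your ansatz has nothing playing that role. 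Second, you acknowledge the quasilinear term of the schematic form $|z|^2\partial_{ss}z$ but resolve it only with ``will likely require a modified energy functional that cancels the offending term exactly''; this is the crux of the estimate, not a detail, and no such cancellation is exhibited. In the paper the analogous delicate cancellations occur pointwise in Proposition \ref{prop:core}, where every term in $D(\mathrm{curl}\,X):(V\otimes V)-\partial_tX\cdot V$ is matched against $1-X\cdot V$ by hand. Third, the closing remark that the manipulations are justified ``by a standard approximation argument on smoother data followed by passage to the limit'' assumes a density/stability theory for the rough solution $v\in L^\infty(I,H^{1/2})$ that does not exist: there is no local well-posedness below $H^2$ and hence no way to approximate a given weak $H^{1/2}$ solution by a sequence of smooth solutions converging to it — indeed establishing continuity of the flow in such topologies is the content of the theorem itself. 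The paper sidesteps this by proving and using a weak formulation (Proposition \ref{prop:weakp}) valid directly for $\Gamma\in L^\infty(I,H^{3/2}_{\rm loc})$, i.e.\ for $v\in L^\infty(I,H^{1/2})$, which your argument would need an analogue of but does not supply.
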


For initial data in $H^k(T^1,S^2),$ with $k\geq 2,$ equation \eqref{eq:schrodimap} possesses 
 unique global solutions in $\mathcal{C}(\R,H^k(T^1,S^2)).$ Those solutions preserve the energy 
$$
E(u(t,\cdot)) := \int_{T^1} \big|\partial_s u \big|^2(t,s) \, ds. 
$$
For initial data in the energy space $H^1(T^1,S^2)$, global solutions in $L^\infty(\R,H^1(T^1,S^2))$ are also known to 
exist. The latter are continuous with values in $H^1_{\rm weak}(T^1,S^2)$, but are not known to be unique nor to preserve the energy. 

\smallskip

\begin{rem}\label{rem:flowcont}
 Let $D(\mathcal{F}) \subseteq H^{1/2}(T^1,S^2))$ be the domain of a hypothetical flow map $\mathcal{F}$ for
\eqref{eq:schrodimap} with values into $\mathcal{C}(I,H^{1/2}_{\rm weak}(T^1,S^2)),$ and let $\mathcal{G}$ be one of
the (possibly unique) existing flow maps from $H^1(T^1,S^2)$ with values into $\mathcal{C}(I,H^1_{\rm
weak}(T^1,S^2)).$  For $s\geq 0$, let $H^s/S^1$ and $H^s_{\rm weak}/S^1$ denote the topological spaces 
obtained as the quotients of $H^s(T^1,\R^3)$ and $H^s_{\rm weak}(T^1,\R^3)$ 
respectively by the continuous action group of $S^1$ through translations\footnote{These are of course no longer
vector spaces.}. Let finally $\mathcal{F}_{/S^1}$ and $\mathcal{G}_{/S^1}$ denote the
left composition of $\mathcal{F}$ and $\mathcal{G}$ respectively by the above quotient map\footnote{We mean pointwise in
$I$. In other words
$\mathcal{F}_{/S^1}$ and $\mathcal{G}_{/S^1}$ describe the profiles of the solutions, independently of their locations.}.    
As a consequence of Theorem \ref{thm:main}, we see that:

\smallskip

At any point in $H^3(T^1,S^2)$, $\mathcal{F}_{/S^1}$ is 
continuous as a map
\begin{align*}
\mathcal{F}_{/S^1}\: : \ \Big(\, D(\mathcal{F})\, ,\, {\rm top}\big(L^2\big)\, \Big) &\longrightarrow
\Big(\, \mathcal{C}\big(H^{1/2}_{\rm weak}\big)\, , \, {\rm top}\big( \mathcal{C}(
L^2/T^1\big)\,\big) \Big),\\
\shortintertext{and $\mathcal{G}_{/S^1}$ is continuous as a map}
\mathcal{G}_{/S^1}\: : \ \Big(\, H^1\, ,\, {\rm top}\big(H^1_{\rm weak}\big)\, \Big) &\longrightarrow
\Big(\, \mathcal{C}\big(H^{1}_{\rm weak}\big)\, , \, {\rm top}\big( \mathcal{C}(
H^1_{\rm weak }/T^1\big)\,\big) \Big).
\end{align*}
\end{rem}

\smallskip

We supplement Theorem \ref{thm:main} and Remark \ref{rem:flowcont} by the following ill-posedness type result, which shows in particular 
 that the function $\sigma$ in the statement of Theorem \ref{thm:main} could not be removed.

\begin{thm}\label{thm:illposed}
For any given $t\neq 0,$ the flow map for \eqref{eq:schrodimap} at time $t$ is not continuous 
as a map from $\mathcal{C}^\infty(T^1,S^2)$, equipped with the weak topology of $H^{1/2},$ to the space of
distributions $(\mathcal{C}^\infty(T^1,\R^3))^*.$ 
Indeed, for any $\sigma_0 \in \R$ there exist a 
sequence of smooth initial data $(u_{m,\sigma_0}(0,\cdot))_{m\in \N} \in \mathcal{C}^\infty(T^1,S^2)$ such that
$$
u_{m,\sigma_0}(0,\cdot) \rightharpoonup u^*(0,\cdot) \phantom{ + \sigma_0t} \qquad\text{ in } H^{\frac12}_{\rm
weak}(T^1,\R^3),
$$
where $u^*(0,s) := (\cos(s),\sin(s),0)$
is a stationary solution of \eqref{eq:schrodimap}, and for any $t\in \R$
$$
u_{m,\sigma_0}(t,\cdot) \rightharpoonup u^*(t,\cdot + \sigma_0t) \qquad\text{ in } H^{\frac12}_{\rm weak}(T^1,\R^3).
$$ 
\end{thm}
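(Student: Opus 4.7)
Writing perturbations of $u^*$ in the moving orthonormal frame $\{u^*(s),\partial_s u^*(s),e_3\}$ as $v=a\,\partial_s u^*+b\,e_3$, the Schr\"odinger map equation linearized at $u^*$ reads $\partial_t a=-(b+\partial_{ss}b)$, $\partial_t b=\partial_{ss}a$, with Fourier dispersion $\omega_k^2=k^2(k^2-1)$. The single-mode traveling waves $a(t,s)=\epsilon\sin(ms-\omega_m t)$, $b(t,s)=-\epsilon\cos(ms-\omega_m t)$ satisfy the key identity $a\,\partial_s b-b\,\partial_s a\equiv\epsilon^2 m$: a resonant \emph{constant} Wronskian. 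Since the quadratic part of $v\times\partial_s v$ along the $u^*$-direction is precisely $(a\,\partial_s b-b\,\partial_s a)\,u^*$, this yields an unoscillating source $\epsilon^2 m\,u^*$ in the nonlinearity, whose $s$-derivative $\epsilon^2 m\,\partial_s u^*$ is exactly the infinitesimal generator of translations of $u^*$. Choosing $\epsilon_m$ so that $\epsilon_m^2 m=|\sigma_0|$ therefore produces the desired drift rate $\sigma_0$.

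\textbf{Construction and weak limits.} For $\sigma_0\geq 0$ take the smooth $S^2$-valued initial data
\[
u_{m,\sigma_0}(0,s)=\cos(\epsilon_m)\,u^*(s)+\sin(\epsilon_m)\bigl[\sin(ms)\,\partial_s u^*(s)-\cos(ms)\,e_3\bigr],\qquad \epsilon_m=\sqrt{\sigma_0/m},
\]
with the sign of the tangential piece flipped when $\sigma_0<0$. Since $\epsilon_m\to 0$, one has $u_{m,\sigma_0}(0,\cdot)\to u^*$ strongly in $L^2$ (hence weakly in $H^{1/2}$), while $\|u_{m,\sigma_0}(0,\cdot)-u^*\|_{H^{1/2}}^2\sim|\sigma_0|$ stays bounded. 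Applying Duhamel's formula and decomposing the integrand $\partial_s(u_{m,\sigma_0}\times\partial_s u_{m,\sigma_0})$ into its resonant part (equal to $\sigma_0\,\partial_s u^*$ up to errors of order $\epsilon_m$) and oscillatory remainders with vanishing weak limit, integrating in time yields $u_{m,\sigma_0}(t,\cdot)\rightharpoonup u^*+\sigma_0 t\,\partial_s u^*$ as $m\to\infty$, on a short time interval; the claim for all $t\in\R$ follows by iterating on subintervals, using the translation invariance of $u^*$ to restart the analysis in a co-moving frame and thus identify the weak limit with $u^*(\cdot+\sigma_0 t)$ globally in time.

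\textbf{Main obstacle.} The technical heart of the argument is propagating the linearized prediction to the full nonlinear flow uniformly in $m$. Although $\|u_{m,\sigma_0}(0,\cdot)-u^*\|_{L^\infty}=O(\epsilon_m)\to 0$, the first derivative is of order $\sqrt{|\sigma_0|\,m}\to\infty$ and the energy $E(u_{m,\sigma_0})\sim|\sigma_0|\,m$ blows up with $m$, so Gronwall-type or direct perturbative estimates are not available. One must instead track the weak limits of the bilinear objects $a_m\partial_s b_m$, $a_m b_m$, $a_m^2$ through the nonlinear flow and show that only the resonant Wronskian combination survives in the $u^*$-direction. The cleanest tool is likely a Hasimoto-type reformulation, in which the underlying cubic NLS structure (and its normal forms) makes the resonance mechanism transparent, combined with energy conservation and the pointwise constraint $|u_{m,\sigma_0}|=1$ to control nonresonant error terms uniformly on bounded time intervals.
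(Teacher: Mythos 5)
Your heuristic identification of the mechanism is essentially correct and consistent with the paper: a small-amplitude, high-frequency perturbation of the circle $u^*$ — of amplitude $\epsilon_m$ at frequency $m$ with $\epsilon_m^2 m\sim\sigma_0$ — produces, via the quadratic interaction of the fast modes, a slow drift of $u^*$ at rate $\sigma_0$. (In the binormal-flow picture used by the paper, this is the slide-along-itself speed $\simeq \epsilon^2/p^3$ of a thin helix of radius $\epsilon$ and pitch $p$; with $u=\partial_s\gamma$ the two scalings are equivalent.) However, your proposal has a genuine gap, which you yourself flag: nothing in the argument controls the full nonlinear flow for these initial data. Since $\|\partial_s u_{m,\sigma_0}(0,\cdot)\|_2^2 \sim \sigma_0 m\to\infty$, no Gronwall/energy comparison to $u^*$ is available uniformly in $m$, and the Duhamel step ``decompose into resonant part plus oscillatory remainders with vanishing weak limit'' is precisely the missing content, not a step one can invoke. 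Your suggested remedy — a Hasimoto/normal-form analysis — is not carried out, and in fact the authors explicitly avoid the Hasimoto transform throughout, noting that pulling estimates back through it is delicate.

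The paper closes this gap by a cleaner device: rather than a generic single-Fourier-mode perturbation (which does not evolve by a tractable law), the initial curves are taken from Kida's explicit family of binormal-flow solutions that move by \emph{rigid motion}. For these, the time evolution is known in closed form — a rotation of speed $\Omega$ about $e_3$, a translation of speed $V$ along $e_3$, and a slip of speed $C$ along the curve — so there is nothing nonlinear to propagate: one only has to tune the elliptic-function parameters $(\alpha_m,\beta_m,\delta_m)$ so that the curve is a thin quasi-helix wound around the unit circle, and then compute the asymptotics of $\Omega_m - C_m$, which yields the desired drift $\sigma_0$. Your leading-order formulas are a linearization of Kida's solutions, but the higher harmonics of the elliptic functions are exactly what make the evolution a rigid motion, and dropping them is what creates the uncontrolled remainder in your Duhamel argument. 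To make your route rigorous one would have to either (i) develop the normal-form/weak-limit machinery you sketch, or (ii) adopt the paper's route and replace your ansatz with the exact Kida initial data.
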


\subsection{Three companions : Binormal flow, Schr\"odinger map, Cubic NLS}\label{sub:trium}

The binormal curvature flow equation for $\gamma : I\times \R \to \R^3$, where $0\in I\subset \R$ is some open interval, is given by
\begin{equation}\label{eq:strongbfbis}
\partial_t \gamma = \partial_s \gamma \times \partial_{ss}\gamma,
\end{equation}
where $s$ is moreover required to be an arc-length parameter for the curves $\gamma(t,\cdot),$ for every $t\in I.$ 
At least for smooth solutions, the arc-length parametrization condition $|\partial_s
\gamma(t,s)|^2 = 1$
is compatible with equation \eqref{eq:strongbfbis}, since 
$$
\partial_t \bigl( |\partial_s \gamma|^2\bigr) = 2 \partial_s\gamma \cdot
\partial_{st}\gamma = 2 \partial_s\gamma \cdot \bigl( \partial_s\gamma \times
\partial_{sss} \gamma\bigr) =0
$$  
whenever \eqref{eq:strongbfbis} is satisfied. 
Denoting by $(\tau,n,b)\equiv (\tau(t,s),n(t,s),b(t,s))$ the 
Frenet-Serret frame along the curve $\gamma(t,\cdot)$ at the point $\gamma(t,s)$, so that $\partial_s \gamma =
\tau$ and $\partial_{ss}\gamma = \kappa n$ where
$\kappa$ is the curvature, 
equation \eqref{eq:strongbfbis} may  be rephrased as
$$
\partial_t \gamma = \kappa b,
$$
from which its name arises. In the sequel, we will only consider solutions of \eqref{eq:strongbfbis} which belong to
$L^\infty(I,H^{3/2}_{loc}(\R,\R^3))$, so that \eqref{eq:strongbfbis} has a well-defined distributional
meaning in $I \times \R$. 

\medskip

If $\gamma \in L^\infty(I,H^{3/2}_{loc}(\R,\R^3))$ is a solution to the binormal curvature flow equation
\eqref{eq:strongbfbis}, then the map $u := \partial_s \gamma \in L^\infty(I,H^{1/2}_{loc}(\R,S^2))$,
parametrizing the evolution in time of the unit tangent vectors to the curves $\gamma(t,\cdot)$, satisfies 
\begin{equation*}
\partial_t u = \partial_t \partial_s \gamma = \partial_s \partial_t \gamma = \partial_s \big( u \times \partial_s
u\big)
\end{equation*}
in the sense of distributions on $I\times \R.$ In other words, $u$ is a solution of the Schr\"odinger map equation
 \eqref{eq:schrodimap} for maps from $\R$ to $S^2$, and the binormal curvature flow equation is therefore a
primitive equation of the Schr\"odinger map equation.

\medskip

Conversely, let $u\in L^\infty(I,H^{1/2}_{loc}(\R,S^2))$ be a solution to the Schr\"odinger map equation
\eqref{eq:schrodimap} and define the function $\Gamma_u \in L^\infty(I,H^{3/2}_{loc}(\R,\R^3))$  by
\begin{equation}\label{eq:tildegamma}
\Gamma_u (t,s) := \int_0^{s} u(t,z)\, dz.
\end{equation}
In the sense of distributions on $I\times \R$, we have 
\begin{equation}\label{eq:almostbf}
\partial_s \Big( \partial_t \Gamma_u - \partial_s \Gamma_u \times \partial_{ss}\Gamma_u\Big)
=0.
\end{equation}
By construction, the primitive curves $\Gamma_u(t,\cdot)$ all have their base point $\Gamma_u(t,0)$ fixed at the
origin. If they were smooth, equation \eqref{eq:almostbf} would directly imply the existence of a function $c_u$
depending only time only and such that
$$
\gamma_u(t,s) := \Gamma_u(t,s) + c(t)
$$
is a solution to the binormal curvature flow equation \eqref{eq:strongbfbis}. The function $c_u$ indeed represents 
the evolution in time of the actual base point of the curves. In Section \ref{sect:relation} we will turn this 
argument into a statement for quasiperiodic solutions in $H^{3/2}.$

\medskip

Our proofs of Theorem \ref{thm:main} and Theorem
\ref{thm:illposed} spend most of their time in the binormal curvature flow world rather than in the Schr\"odinger
map one.  

\medskip

Besides the bidirectional relation between the binormal curvature flow equation 
and the Schr\"odinger map equation presented just above, Hasimoto \cite{Has} exhibited in 1972 an
intimate relation between the binormal curvature flow equation
\eqref{eq:strongbfbis} and the cubic focusing nonlinear Schr\"odinger equation.
Let $\gamma : I \times \R \to \R^3$ be a smooth and biregular solution of the binormal
curvature flow equation \eqref{eq:strongbfbis}, and denote by $\kappa$ and $T$ respectively the curvature 
and torsion functions of $\gamma$. Then, 
the function $\Psi$ defined on $I\times \R$ by the Hasimoto transform
\begin{equation*}\label{eq:hasi}
\psi(t,s) := \kappa(t,s) \exp\left( i\int_0^s T(t,z)\, dz \right)
\end{equation*}
is a solution to 
\begin{equation*}\label{eq:schrodicubic}
\partial_t \psi + \partial_{ss}\psi +\frac{1}{2}(|\psi|^2-A(t))\psi = 0
\end{equation*}
where
$$
A(t) := \left( 2 \frac{\partial_{ss}\kappa - \kappa T^2}{\kappa} + \kappa^2\right)(t,0).
$$
If $\gamma$ is $2\pi$-periodic in $s$, that is if $\gamma : I\times T^1 \to
\R^3,$ then $\Psi$ is only quasiperiodic unless $\int_0^{2\pi} T(t,z)\, dz \in
2\pi \mathbb{Z}.$ Nevertheless, it is possible to recover a $2\pi$-periodic
function $\Psi$ by means of a Galilean transform. One can also get rid of the
$A(t)$ factor by means of a phase shift. More precisely, the function
$$
\Psi(t,s) := \psi(t,s-\frac{b}{2}t) \exp\left(i(bs-b^2t-\int_0^t \frac{1}{2}A(z)\, dz)\right), 
$$
where
$$
b := 1 - \frac{1}{2\pi}\int_0^{2\pi} T(0,z)\, dz = 1 - \frac{1}{2\pi}\int_0^{2\pi} T(t,z)\, dz
$$
is  well-defined on $I\times T^1$ and is a solution to the cubic focusing nonlinear Schr\"odinger equation
\begin{equation}\label{eq:nlsint}
\partial_t \Psi + \partial_{ss}\Psi +\frac{1}{2}|\Psi|^2\Psi = 0
\end{equation}
on $I\times T^1.$ 

\smallskip

Equation \eqref{eq:nlsint} is integrable and known to be solvable by the inverse
scattering method since the works of Zakharov and Shabat \cite{ZaSh} in 1971 for the vanishing case
and Ma and Ablowitz \cite{MaAb} in 1981 for the periodic case. Therefore the binormal curvature flow equation
\eqref{eq:strongbfbis} and the Schr\"odinger map equation \eqref{eq:schrodimap} are also integrable, in the weak
sense that they can be mapped to an integrable equation.
Notice however that the inverse of the Hasimoto transform, whenever it is well defined\footnote{Vanishing of $\Psi$
yields underdetermination.}, involves reconstructing a
curve from its curvature and torsion functions, an operation which is both highly 
nonlinear and nonlocal. At least for that reason, translating an estimate available for the cubic focusing NLS to the 
binormal curvature flow equation or to the Schr\"odinger map equation, or even vice versa, doesn't look like a 
straightforward task. 

\medskip 

Our proofs of Theorem \ref{thm:main} and Theorem \ref{thm:illposed} make no use of the Hasimoto transform, nor, at least directly, 
of the induced integrability. It would be of interest to better understand the implications, if any, of Theorem 
\ref{thm:main} for the cubic focusing NLS. 

\subsection{Comments on the Cauchy theory}\label{sub:Cauchy}

Schr\"odinger map equations arise in the literature not only for maps from $T^1$ to $S^2$, but 
more generally for maps from a Riemannian manifold to a K\"ahler manifold of arbitrary dimensions.  
It is not our purpose here to cover the growing literature about the Schr\"odinger map equations in
specific or general cases.  Instead, we will briefly focus on those works that, when they appeared, brought an
improvement at the level of the Cauchy theory when particularized (or adapted) to the case of maps from 
$T^1$  to $S^2$ only.   

\smallskip

As far as the Cauchy theory is concerned, it seems that \eqref{eq:schrodimap} was first studied by Zhou and Guo
\cite{ZhGu} in 1984 for maps $u$ from an interval to $S^2$, and by Sulem, Sulem and Bardos \cite{SuSuBa} in 1986
 for maps 
$u$ from $\R^N$ to $S^2$, for arbitrary $N\geq 1.$ Their proofs, which do not make use of any dispersive estimate, 
and therefore their results, can be translated almost word for word\footnote{Since $\R^N$ is non compact and
functions take values in $S^2$, the spaces involved in \cite{SuSuBa} are homogeneous Sobolev spaces.} in the case 
of maps from $T^1$ to $S^2.$ Both papers establish that \eqref{eq:schrodimap} possesses weak solutions in 
$L^\infty(H^1).$  The method in \cite{ZhGu} is based on a parabolic regularization and the Leray-Schauder fixed
point theorem, whereas the method in \cite{SuSuBa} is based on discrete approximation schemes and a weak compactness 
argument.  In particular none of them provide uniqueness in $H^1.$ The global character of their solutions, as
stated in \cite{SuSuBa}, follows from the global character of the approximating ones, since an energy is a Lyapunov
functional for the latter. 

\smallskip 

In a second step, differentiating  equation \eqref{eq:schrodimap} with 
respect to time and then performing energy estimates, it is proved in \cite{SuSuBa} that the growth rate of any $H^k$ norm of
a solution is controlled a priori by its $W^{2,\infty}$ norm. By Sobolev embedding and compactness, this yields local existence
in $L^\infty(H^k)$ for any $k>N/2+2,$ that is $k=3$ in dimension 1. Working out some of the details in
\cite{SuSuBa}, one realizes (see e.g. Appendix \ref{sect:cauchy}) that in dimension 1 the $H^2$ norm of the solution suffices to control
all the other higher norms, which yield local existence in $L^\infty(H^2)$ as well. Global existence in
$L^\infty(H^k)$ for any $k\geq 2$ then follows, as remarked in \cite{SuSuBa} or in Zhou, Guo and Tan
\cite{ZhGuTa} in 1991, by the a priori control on the $H^2$ norm given by the further conservation law for
$$
I(u(t,\cdot)) := \int_{T^1} \big|\partial_{ss} u + |\partial_s u|^2u\big|^2(t,s) + \big|\partial_s u\big|^2(t,s) -
\frac{3}{4}\big|\partial_s u\big|^4(t,s)\, ds,
$$  
which holds at least for solutions in $L^\infty(H^3)$. Pushing arguments of \cite{SuSuBa}\footnote{Both \cite{SuSuBa} 
and \cite{ZhGuTa} have let aside the study of the difference between two solutions, asserting that the
uniqueness followed from regularity. Whereas this is generally true, it is sometimes desirable, and we actually
needed that for Theorem \ref{thm:main}, to know which second norm controls what first norm of the difference.} a bit further, one also obtains 
 global well-posedness in $H^k$, for $k\geq 3$, as well as Lipschitz continuity of the flow map
in $H^3$, for the $H^1$ or the $H^2$ norm, with a constant depending only on the $H^3$ norm (see also 
Appendix \ref{sect:cauchy}).

\smallskip

In 2001, Ding and Wang \cite{DiWa} used the parabolic approximation approach and gave complete proofs of 
existence of global solutions  in $L^\infty(H^k),$ for any $k\geq 2$, and of uniqueness in $L^\infty(H^k)$, for any $k\geq 3.$   

\smallskip

Uniqueness in $L^\infty(H^2)$ was proved by Chang, Shatah and Uhlenbeck \cite{ChShUh} in 2000, 
 some part being clarified by Nahmod, Shatah, Vega and Zeng \cite{NaShVeZe} in 2006. The strategy used in those works 
consists in writing the components of the derivatives of a solution $u$ in a special orthonormal frame in the 
tangent plane $u^\perp$, a frame adapted to the solution itself, in which the equation turns out to be of NLS type.
 In essence, this is very much alike the Hasimoto transform, mentioned in the previous subsection, but it has the
advantage of additional flexibility. 
 Borrowing known uniqueness results at the NLS level, uniqueness for the Schr\"odinger map equation follows, provided 
one can pull it back (which requires some care since the frame depends on the solution).       

\smallskip

A different approach to uniqueness in $H^2$, based on more classical energy methods, was successfully used
 by McGahagan \cite{McG,McG2} in 2004. The key ingredient in her analysis, arguments earlier used by 
Shatah in related contexts (see e.g. \cite{Sh}), is to measure the discrepancy between the derivatives of two 
different solutions not by embedding the target manifold in a big euclidean space and using euclidean distances,
but by parallel transporting one derivative into the tangent plane of the other one and measuring there. 
Particularized to the case of $T^1$ and $S^2$, \cite{McG2} implies global well-posedness in $L^\infty(H^2)$, and Lipschitz continuity of
the flow map in $H^2$, for the $H^1$ norm, with a constant depending only on the $H^2$ norm.        

\smallskip
At the time of writing, there doesn't seem to be any local well-posedness result available below the $H^2$ regularity
 threshold. On the other hand, as expressed in particular in the introduction of \cite{NaShVeZe}, in terms of scalings
 equation \eqref{eq:schrodimap} is sub-critical in $H^s(T^1,S^2)$ for any $s>1/2.$  Therefore, even though one would be tempted to
believe that dimension $N=1$ is the easiest case as far as the Cauchy theory is concerned, there remain 
substantial gaps between the criticality at the level of $H^{1/2}$, the existence at the level of $H^1$, and the 
well-posedness at the level of $H^2.$

\medskip
Theorem \ref{thm:main} corresponds to a weak-strong type estimate, since one of the solutions is required to 
be smooth. Being such, there is no hope that it could be used alone to improve the Cauchy theory for 
\eqref{eq:schrodimap}. On the other hand, the estimate which it provides holds in a norm with very few 
regularity ($L^2$ is $1/2$ derivative below the critical space).  

\medskip
Theorem \ref{thm:illposed} focuses on ill-posedness\footnote{In higher dimensions Sch\"rodinger maps are
not always globally defined, and blow-up analysis is a current area of active investigation, but this is a
different issue.} for low regularity norms. Such considerations have attracted attention in related contexts of
dispersive equations, in particular since Bourgain \cite{Bo2} remarked that in cases it was possible to 
rule out regularity of the flow map even in spaces with more regularity than the critical space. A special
 attention was devoted to the cubic nonlinear Schr\"odinger equation in one spatial
dimension,  in particular in the works of Kenig-Ponce-Vega 2001 \cite{KePoVe}, Burq-G\'erard-Tzvetkov 2002 \cite{BuGeTz} and
Christ-Coliander-Tao 2003 \cite{ChCoTa}. In that case, the critical space in terms of scaling is $H^{-1/2}$, global 
well-posedness was proved in 1993 by Bourgain \cite{Bo} in the space $L^2$, and failure of continuity of the flow map
arises in any $H^s$ with $s<0.$ Counter examples to continuity were essentially constructed by 
superposition of a slow and a rapid modes, each of them being either explicit or well understood through the Galilean 
invariance of the equation. Our strategy for Theorem \ref{thm:illposed} is somewhat parallel though in a rather
different context, and it does not really involve new ideas. In 2007, Molinet \cite{Mo} gave further insight into the non continuity issue in the
focusing cubic NLS case, and was even able to recover some form of continuity in $L^2_{\rm weak}$, provided one is allowed to quotient by
the action group of $S^1$, through phase multiplication there. In some sense, Theorem \ref{thm:main} bears some 
common features with Molinet's result\footnote{One could actually argue that the focusing cubic NLS on $T^1$ and
equation \eqref{eq:schrodimap} are actually very much related, not to say equivalent, in particular in view of the Hasimoto
 transform (see Subsection \ref{sub:trium} above) or the related gauge transforms of Chang, Shatah and Uhlenbeck \cite{ChShUh}. 
Things are probably a little more complex though, since, as we already wrote, those transforms involve the solution itself 
and translating back and forth results in each framework requires at least some care.}.

\subsection{Strategy of the proofs and Plan of the paper}\label{sub:strategy}

The general strategy of the proof of Theorem \ref{thm:main} is as follows; we first voluntarily somewhat 
simplify the truth in order not to obscure the main ideas, and then indicate which
additional issues have to be overcome in a second step. 

\smallskip 

Given two solutions $u$ and $v$ of the Schr\"odinger map equation \eqref{eq:schrodimap},
we construct the two primitive solutions $\gamma_u$ and $\gamma_v$ of the binormal curvature
flow equation whose derivatives coincide with $u$ and $v$ respectively, as explained in Subsection
\ref{sub:trium} and proved in Section \ref{sect:relation}. Let us write $\gamma_u \equiv \gamma \equiv (\gamma_t)_{t\in
I}$ and $\gamma_v \equiv \Gamma \equiv (\Gamma_t)_{t\in I};$  one should keep in mind that $\gamma$ is the smooth solution
and $\Gamma$ is the rough one.  If $u(0,\cdot)$ and
$v(0,\cdot)$ are close in $L^2$, then the curve $\Gamma_0$ is close to the curve $\gamma_0$ 
in many respects, but in particular for the discrepancy measure
$$
F_{\gamma_0}(\Gamma_0) := \int_{\Gamma_0} 1 - f(d^2(x,\gamma_0)) \tau_{\Gamma_0}(x) \cdot \tau_{\gamma_0}(P(x))\,
d\mathcal{H}^1, 
$$ 
where $d$ is the distance function, $f(d^2) \simeq 1- d^2$ in a neighborhood of $d=0$, $P$ is the orthogonal
projection on the curve $\gamma_0$ and $\tau_{\Gamma_0}$ and $\tau_{\gamma_0}$ denote the oriented unit tangents to
$\gamma_0$ and $\Gamma_0$ respectively. It turns out, and this is the main ingredient in our proof, that since
$\Gamma$ and $\gamma$ both solve the binormal curvature flow equation, we have the Gronwall type inequality
\begin{equation}\label{eq:gronwallrough}
\frac{d}{dt} F_{\gamma_t}(\Gamma_t) \leq C F_{\gamma_t}(\Gamma_t)
\end{equation}
almost everywhere on $I$, where the constant $C$ is explicit in terms of the $L^\infty$ norms of 
$\partial_{ss}\gamma_t$ and $\partial_{sss}\gamma_t.$   In order to prove this inequality, we use in particular the 
fact that $\Gamma$ satisfies some weak formulation of \eqref{eq:strongbfbis}, namely
\begin{equation}\label{eq:weakrough}
\frac{d}{dt} \int_{\Gamma_t} X \cdot \tau_{\Gamma_t} \, d\mathcal{H}^1 = \int_{\Gamma_t} \partial_t X\cdot
\tau_{\Gamma_t} - D({\rm curl}\, X)\,:\,
(\tau_{\Gamma_t} \otimes \tau_{\Gamma_t})\, d\mathcal{H}^1,
\end{equation} 
for every smooth time-dependent vector field $X : I\times \R^3 \to \R^3.$ In view of the definition of $F$, we apply \eqref{eq:weakrough} for the choice $X(x) := d^2(x,\gamma_t)\tau_{\gamma_t}(P(x)).$  
Once \eqref{eq:gronwallrough} is proved, we obtain from Gronwall's inequality a control on  $F_{\gamma_t}(\Gamma_t)$
in terms of $F_{\gamma_0}(\Gamma_0),$ and therefore of $\|u(0,t)-v(0,t)\|_2^2.$ The last task we have to deal with is 
then to relate this discrepancy measure $F_{\gamma_t}(\Gamma_t)$ to the closeness in $L^2$ between
$u(t,\cdot)$ and $v(t,\cdot).$  Notice the important fact that, by its definition, $F$ doesn't care too much about 
the actual parametrizations of $\gamma_t$ and $\Gamma_t$ (only $P$ is involved).  In particular $F$ is insensible to 
constant shifts in the space variables of one or both of the curves, and therefore it does not control the 
squared $L^2$ distance between $u(t,\cdot)$ and $v(t,\cdot).$ It turns out that $F$ actually controls the squared 
$L^2$ distance between $v(t,\cdot)$ and a suitable translate $u(t,\cdot + \sigma(t))$ of $u(t,\cdot)$
, and that one can keep track of $\sigma(t)$ in time;  indeed, $\sigma(t)$ is essentially determined by
 $P(\Gamma_t(0))=\gamma_t(\sigma(t))$. The conclusion of Theorem \ref{thm:main} then follows.

As mentioned, there are some issues coming into the program. First, we need the curves $\gamma_u$ and $\gamma_v$ 
to have the same pitch in order to perform the analysis at the binormal curvature flow level. If $u$ and $v$ do not
have the same mean, we therefore first replace $u$ by another smooth solution $\tilde u$ which is sufficiently close 
to $u$ and has the same mean as $v$. We apply our strategy to $\tilde u$ and $v$ and then estimate the difference
between $\tilde u$ and $u$ by standard energy methods. Second, the orthogonal projection $P$ involved in the
definition of $F$ may not be well-defined, in particular if the smooth curve $\gamma$ has self-intersections or is close
to be so. To overcome this difficulty, we need to keep track of the fact that at the initial time, the curves
$\gamma_0$ and $\Gamma_0$ are not only close in terms of the Hausdorff distance or of $F$, but are also 
``parametrically'' close.  In doing so, we can guess at which place on $\gamma_t$ we should orthogonally project
each point of $\Gamma_t$, even when self-intersections are involved. The bottom line is that the actual definition
of $F$ gets somewhat more complex, as well as the weak formulation in \eqref{eq:weakrough}, but the overall strategy remains the
same.            

\medskip

The ideas behind the proof of Theorem \ref{thm:illposed} are part of the folklore on  
ill-posedness issues for dispersive equations in low regularity spaces. The functions $u_{m,\sigma_0}$ mentioned in
the statement of Theorem \ref{thm:illposed} are derived from explicit particular solutions $\gamma \equiv 
\gamma_{m,\tilde\sigma_0}$ of the binormal curvature flow equation. Those special solutions satisfy the property that at any positive time $t$ 
the solution curve $\gamma_t$ is obtained from the initial curve $\gamma_0$ through a rigid motion. Such solutions 
have been studied and classified in a beautiful paper by Kida \cite{Ki} in 1981. Among the many-parameters family of exact
solutions that Kida constructed, we carefully single out a sequence of initial data that have the form 
of a helix of small pitch and small radius wrapped around a fixed circle of unit radius. The pitch and radius of those helices tend
to zero along the sequence, with the radius being proportional to the power $3/2$ of the pitch. For each of these
initial data, the corresponding solution $\gamma_t$ at time $t$ is obtained by the superposition of two rigid
motions applied to $\gamma_0$: a translational motion of the base circle (this corresponds to the ``limit solution'', a circle being a
travelling wave solution of equation \eqref{eq:strongbfbis}) and a nontrivial
sliding motion along the base circle induced by the curvature of the small helix. Recast into the framework of the
Schr\"odinger map equation, this sliding motion gives us the continuity breaking parameter.            

\medskip

\noindent
We now shortly describe the plan of the paper. In Section \ref{sect:relation} we make precise the construction of a solution 
of \eqref{eq:strongbfbis} from a solution
of \eqref{eq:schrodimap}, requiring as little regularity as possible. In Section \ref{sect:meas}, we introduce the
adaptation of the functional $F$ above and relate it to different types of discrepancy measures between two curves.
 In Section \ref{sect:weakp}, we introduce and prove a weak formulation similar to \eqref{eq:weakrough} for 
parametrized solutions of the binormal curvature flow equation. In Section \ref{sect:key} we prove the basic
estimate which is at the heart of \eqref{eq:gronwallrough}. We gather all these informations in Section
\ref{sect:gron} to derive a discrepancy estimate for parametrized solutions of the binormal curvature flow
equation. Finally, we prove Theorem \ref{thm:main} in Section \ref{sect:th1} and Theorem \ref{thm:illposed} in
Section \ref{sect:th2}. In appendix we have gathered some presumably well-known continuity estimates for the 
flow map of \eqref{eq:schrodimap} at the level of smooth solutions. We couldn't find them with sufficient precision
about the constants dependence in the literature.

\section{Schr\"odinger maps and Binormal curvature flow}\label{sect:relation}

In this section, we make precise the relation between the binormal curvature flow equation and the Schr\"odinger
map equation which was outlined in Subsection \ref{sub:trium} in the Introduction. 
For later purposes, we wish to consider solutions of \eqref{eq:schrodimap} defined on tori of length possibly
different from the length $2\pi$ which we have arbitrarily chosen so far. Of course one can always use the scalings
of the equation to turn back to such a situation, but in the section following this one we would like to 
explicitly keep track of some constants depending on the length. For $\ell>0,$ we therefore denote by 
$T^1_\ell$, or actually simply $T^1$ if no confusion occurs, the flat torus
$$
T^1_\ell := \R \big/ \ell \Z.
$$

\begin{lem}\label{lem:basepoint}
Let $w\in L^\infty( I, H^{1/2}(T^1,S^2))$ be a solution of \eqref{eq:schrodimap}, and let $\Gamma_w$
be defined by \eqref{eq:tildegamma}. There exists a unique continuous function $c_w\,:\,  I \to \R^3$
satisfying $c_w(0)=0$ and such that the function $\gamma_w \in L^\infty(
I,H^{3/2}_{loc}(\R,\R^3))$ defined by
\begin{equation}\label{eq:plusbase}
\gamma_w (t,s) := \Gamma_w(t,s) + c_w(t),
\end{equation}
is a solution of the binormal curvature flow equation \eqref{eq:strongbfbis} on $I\times \R.$   
\end{lem}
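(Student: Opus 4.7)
Uniqueness is immediate: the difference of two admissible $c_w$'s is an $s$-independent function, and substitution into \eqref{eq:strongbfbis} together with $c_w(0)=\tilde c_w(0)=0$ forces it to vanish.

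For existence, the starting point is the distributional identity \eqref{eq:almostbf}: using $\partial_s\Gamma_w = w$ and the Schr\"odinger map equation \eqref{eq:schrodimap},
\begin{equation*}
\partial_s\bigl(\partial_t\Gamma_w - w\times\partial_s w\bigr) = \partial_t w - \partial_s(w\times\partial_s w) = 0 \quad\text{in}\ \mathcal{D}'(I\times\R).
\end{equation*}
Since $I\times\R$ is connected, the quantity $X := \partial_t \Gamma_w - w\times\partial_s w$ coincides, as a distribution, with the pullback under $(t,s)\mapsto t$ of some $\alpha \in \mathcal{D}'(I)$, which we formally denote $-\dot c_w$. The remaining task is to realize $\alpha$ as the time-derivative of an honest \emph{continuous} function vanishing at $0$.

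To that end, fix $\phi\in C_c^\infty(\R)$ with $\int\phi = 1$ and set
\begin{equation*}
c_w(t) \,:=\, \int_0^t \bigl\langle (w\times\partial_s w)(\tau,\cdot),\,\phi\bigr\rangle\,d\tau \,-\, \bigl\langle \Gamma_w(t,\cdot) - \Gamma_w(0,\cdot),\,\phi\bigr\rangle.
\end{equation*}
The second summand is continuous in $t$ since solutions in $L^\infty(I,H^{1/2})$ are automatically continuous into $H^{1/2}_{\rm weak}$ (as recalled in the Introduction), and pairing a weakly continuous $H^{1/2}$-valued curve with a smooth test function yields a continuous scalar. For the first summand, one interprets $w\times\partial_s w$ as a well-defined distribution in $L^\infty(I, H^{-1/2})$ by exploiting the algebra structure of $H^{1/2}\cap L^\infty$ and the pointwise constraint $|w|=1$; its pairing with $\phi$ then lies in $L^\infty(I)$ and its time-antiderivative is continuous. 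That $c_w(t)$ is independent of the specific choice of $\phi$ (under $\int\phi =1$) follows from the $s$-independence of $X$ already established.

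A direct distributional computation then yields $\dot c_w = -X$ in $\mathcal{D}'(I)$, so that $\gamma_w := \Gamma_w + c_w$ satisfies
\begin{equation*}
\partial_t\gamma_w \,=\, \partial_t\Gamma_w + \dot c_w \,=\, w\times\partial_s w \,=\, \partial_s\gamma_w\times\partial_{ss}\gamma_w
\end{equation*}
in $\mathcal{D}'(I\times\R)$, which is \eqref{eq:strongbfbis}. The main technical hurdle is the rigorous definition and minimal regularity of the distribution $w\times\partial_s w$ at the $H^{1/2}$ threshold, which is precisely what is needed to promote $\dot c_w$ from a mere distribution to a locally integrable function, and thereby $c_w$ to a continuous one; once that is secured, the rest of the argument is routine distribution theory combined with the $H^{1/2}_{\rm weak}$ continuity of $w$ in time.
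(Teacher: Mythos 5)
Your proof is correct, and it takes a genuinely different route from the paper's.  The paper works on $I\times T^1$, where $\Gamma_w$ fails to be a distribution because it has a jump of height $-\int_{T^1}w$ at the origin; this forces the introduction of the restriction $\tilde\Gamma_w$, a careful bookkeeping of the Dirac mass it produces, a decomposition of test functions $\varphi=\partial_s\chi+m$ by subtracting the mean, and ultimately an explicit formula for $c_w$, followed by a partition-of-unity argument to transfer the conclusion back from $T^1$ to $\R$.  You instead work directly on $I\times\R$, where $\Gamma_w$ is an honest distribution and $\partial_s\bigl(\partial_t\Gamma_w-w\times\partial_s w\bigr)=0$ is immediate, and you invoke the structure theorem for distributions on a product annihilated by $\partial_s$.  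Defining $c_w$ by pairing with an arbitrary $\phi$ of unit integral, checking independence of $\phi$ via the $s$-independence of $X$, and establishing continuity from the weak-$H^{1/2}$ continuity of $w$ and the uniform-in-time boundedness of $\langle w\times\partial_s w,\phi\rangle$, is cleaner and sidesteps the torus/line juggling entirely.  What the paper's route buys is an explicit closed-form expression for $c_w$; what yours buys is economy.  One small imprecision: $w\times\partial_s w$ does not obviously land in $L^\infty(I,H^{-1/2})$ at the $H^{1/2}$ level; what one really gets (and what the paper itself uses in Step 2 of the proof of Proposition \ref{prop:weakp}) is $w\times\partial_s w\in L^\infty\bigl(I,(H^{1/2}\cap L^\infty)^*\bigr)$, via the Banach-algebra property of $H^{1/2}\cap L^\infty$.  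This does not affect your argument, since $\phi\in\mathcal{C}_c^\infty\subset H^{1/2}\cap L^\infty$, so the pairing $\langle w\times\partial_s w,\phi\rangle$ remains a bounded measurable function of $t$ and your continuity conclusion stands.
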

\begin{proof}
Since the function $\Gamma_w$ is not a distribution on $I\times T^1$, we first consider instead its restriction $\tilde
\Gamma_w$ to $ I \times [0,\ell),$ which we view as a distribution on $I\times T^1.$  We have 
$\tilde \Gamma_w \in \mathcal{C}( I,L^2(T^1,\R^3))$, but in general $\tilde \Gamma_w \notin L^\infty(
I,H^{1/2}(T^1,\R^3))$, since  for each $t\in  I$  the function $\tilde \Gamma_w(t,\cdot)$ has a jump of 
height $-\int_{T^1} w(t,z)\, dz$ at the origin. We also define $S \in \mathcal{D}'( I \times T^1,\R^3)$ by 
$$
S := \partial_t \tilde\Gamma_w - w\times \partial_s w.
$$  

First, we have
\begin{equation}\label{eq:deun}\begin{split}
\partial_s S &= \partial_t \partial_s \tilde\Gamma_w - \partial_s\big( w \times \partial_s w\big)\\
& = \partial_t  w - \partial_s\big( w \times \partial_s w\big)  - \partial_t \left( (\int_{T^1}
w(t,z)\,dz)\, dt \otimes \delta_{0} \right)\\
& = 0, 	
\end{split}	
\end{equation}
since \eqref{eq:schrodimap} holds and because \eqref{eq:schrodimap} also implies that  $\int_{T^1} w(t,z)\, dz$
does not depend on time. 

Next, let $\varphi \in  \mathcal{D}(I\times T^1,\R^3).$ We may write 
$$\varphi(t,s) = \partial_s\chi (t,s) + m(t,s),$$
where 
$$
m(t,s)\equiv m(t) := \frac{1}{\ell} \int_{T^1} \varphi(t,s)\, ds \in \mathcal{D}(I\times T^1,\R^3)
$$
and
$$
\chi(t,s) =  :\int_0^s \varphi(t,z)-m(t)\, dz \in
 \mathcal{D}(I\times T^1,\R^3).
$$
In view of \eqref{eq:deun}, we obtain
\begin{equation}\label{eq:dedeux}
\langle S , \varphi \rangle_{I\times T^1}  = \langle S , \partial_s \chi + m\rangle_{I\times T^1} = -\langle \partial_s S , \chi \rangle_{I\times T^1} +
\langle S , m \rangle_{I\times T^1}
 = \langle S , m \rangle_{I\times T^1},
\end{equation}
where we denote by $\langle\cdot,\cdot\rangle_{I\times T^1}$ the duality between $\mathcal{D}'(I\times T^1,\R^3)$
and $\mathcal{D}(I\times T^1,\R^3).$ We expand
\begin{equation}\label{eq:detrois}\begin{split}
\langle S , m \rangle_{I\times T^1} &= - \langle \tilde\Gamma_w , \partial_t m\rangle_{I\times T^1} - \langle w
\times \partial_s w , m\rangle_{I\times T^1}\\
&= - \int_I \int_0^{\ell} \left(\int_0^s w(t,z)\, dz\right) \cdot m'(t)\, dsdt - \int_I \int_0^{\ell}
\big(w(t,s)\times\partial_s w(t,s)\big) \cdot m(t)\, dsdt\\
&= \int_I \int_0^{\ell}\left( (s-\ell)w(t,s)+\int_0^t \big(w(\tau,s)\times\partial_s
w(\tau,s)\big)\, d\tau \right)\, ds \cdot m'(t)\, dt\\
&= \int_I  c_w(t)\cdot m'(t)\, dt,
\end{split}	
\end{equation}
where
$$
c_w(t) := \int_0^{\ell}\left( (s-\ell)\big(w(t,s)-w(0,s)\big)+\int_0^t \big(w(\tau,s)\times\partial_s
w(\tau,s)\big)\, d\tau \right)\, ds.
$$
Notice that by construction $c_w(0)=0$, and, since $w\in \mathcal{C}(I,H^{1/2}_{\rm weak}(T^1,S^2))$, we have $c_w \in
\mathcal{C}( I,\R^3).$
 We will now prove that the function $\gamma_w$ defined on $ I \times \R$ by
$$
\gamma_w(t,s) := \Gamma_w(t,s) + c_w(t)
$$
is a solution to the binormal curvature flow equation \eqref{eq:strongbfbis} on $I\times \R.$ 
To that purpose, let $\psi \in \mathcal{D}(I\times\R,\R^3)$ be arbitrary. We first write 
\begin{equation}\label{eq:dequatre}
\langle \partial_t  \gamma_w - \partial_s \gamma_w \times \partial_{ss} \gamma_w , \psi \rangle_{I\times \R} = 
\langle \partial_t  \Gamma_w - w \times \partial_{s} w , \psi \rangle_{I\times \R} - \int_I c_w(t) \cdot n'(t)\, dt,
\end{equation}
where
$$
n(t) := \int_\R \psi(t,s) \, ds.
$$
In order to compute the first term on the right-hand side of \eqref{eq:dequatre}, we consider a  
partition of unity $1 = \sum_{j\in \Z} \lambda(\cdot+j\ell/2)$ on $\R$,  where $\lambda\in \mathcal{D}(\R,\R)$ has
compact support in
$(0,\ell)$, and we decompose $\psi$ as $\psi = \sum_{j\in \Z} \psi_j$ where $\psi_j \in \mathcal{D}( I \times
\R,\R^3)$ is defined by
$$
\psi_j(t,s) := \psi(t,s)\lambda(s-j\ell/2).
$$ 
For $j\in \Z,$ we also define $\tilde \psi_j \in \mathcal{D}( I \times T^1,\R^3)$ by
$$
\tilde \psi_j(t,s) := \sum_{k\in \Z} \psi_j(t,s+j\ell/2+k\ell).
$$
Since $\psi_j$ has its support in $ I \times [j\ell/2,(j/2+1)\ell]$, we have  
\begin{equation}\label{eq:taylor1}
\langle \partial_t \Gamma_w , \psi_j \rangle_{I\times \R} 
= -\int_I\int_{j\ell/2}^{(j/2+1)\ell} \tilde \Gamma_w(t,s) \cdot \partial_t \psi_j(t,s)\,dsdt.
\end{equation}
Changing variables and using the equality $\Gamma_w(t,s+\ell) = \Gamma_w(t,s) + \int_{T^1}w$, valid for any $t\in
 I$ and $s\in \R$, we obtain, for each $t\in  I,$
\begin{equation}\label{eq:chine}
\int_{j\ell/2}^{(j/2+1)\ell} \tilde \Gamma_w(t,s) \cdot \partial_t \psi_j(t,s)\,ds
= \int_{0}^{\ell} \tilde
\Gamma_w(t,s) + H_j(s) \cdot \partial_t \tilde\psi_j(t,s)\, ds
\end{equation}
where $H_j(s) \equiv (\int_{T^1}w)p$  for all $s\in [0,\ell)$ if $j=2p\in 2\Z$, whereas $H_j(s)=(\int_{T^1}w)(p+1)$
for $s\in [0,\ell/2)$ and $H_j(s)
=  (\int_{T^1}w)p$ for $s\in [\ell/2,\ell)$ if $j=2p+1\in 2\Z+1.$ Since in any case $H_j$ does not depend on time, integrating \eqref{eq:chine} in time
and using \eqref{eq:taylor1} yields 
\begin{equation}\label{eq:ping}
\langle \partial_t \Gamma_w , \psi_j \rangle_{I\times \R}= 
\langle \partial_t \tilde \Gamma_w , \tilde \psi_j \rangle_{I\times T^1}.
\end{equation}
By periodicity of $w$, we also have
\begin{equation}\label{eq:pong}
\langle -w\times\partial_s w , \psi_j \rangle_{I\times \R}= 
\langle -w\times \partial_s w , \tilde \psi_j \rangle_{I\times T^1}.
\end{equation}
Therefore, adding \eqref{eq:pong} to \eqref{eq:ping} and using \eqref{eq:dedeux} and \eqref{eq:detrois} with
the choice $\varphi = \tilde \psi_j$ we are lead to
\begin{equation}\label{eq:fuku}
\langle \partial_t \Gamma_w-w\times\partial_s w , \psi_j \rangle_{I\times \R} = \langle S ,
\tilde \psi_j \rangle_{I\times T^1} = \int_I c_w(t) \cdot n_j'(t)\, dt,
\end{equation}
where
$$
n_j(t) := \int_{T^1} \tilde\psi_j(t,s)\, ds = \int_{\R} \psi_j(t,s)\, ds.
$$
Summation in $j$ inside \eqref{eq:fuku} and substitution in \eqref{eq:dequatre} finally yields
$$
\langle \partial_t  \gamma_w - \partial_s \gamma_w \times \partial_{ss} \gamma_w , \psi
\rangle_{I\times \R} = \int_I \Big(
(\sum_j n_j'(t)) - n(t)\Big) \cdot c_w(t)\, dt = 0,
$$ 
so that the claim is proved and hence the existence of $c_w$. Since $c_w$ is required to be continuous 
with $c_w(0)=0$, and since its distributional derivative is completely determined by \eqref{eq:almostbf} and
\eqref{eq:plusbase}, its uniqueness follows.   
\end{proof}

Unless $a:=\int_{T^1} w = 0$, the ``curves'' $\gamma_w$ constructed in the previous lemma are not closed, but merely 
invariant under translation by $a.$ In the next section, we introduce various ways to measure the distance between such
curves, and briefly study their relations.

\section{Measuring the closeness between quasiperiodic curves}\label{sect:meas}

\begin{defi}\label{defi:curves}
A curve in $\R^3$ parametrized by arc-length is a Lipschitz map $\gamma\, :\,
\R \to \R^3$ such that 
$$
|\gamma'(s)| = 1 \qquad \text{for a.e. }\, s \in \R.
$$
A quasiperiodic curve in $\R^3$ of period $\ell > 0$ and pitch $a\in \R^3$ is
a curve in $\R^3$ parametrized by arc-length and such that
$$
\gamma(s+\ell) = \gamma(s) + a,\qquad \forall\, s\in \R.
$$
A closed curve in $\R^3$ of length $\ell > 0$ is a quasiperiodic curve in $\R^3$ of period $\ell$ and
zero pitch. 
\end{defi}

\begin{rem}
i) When $\gamma$ is a curve in $\R^3$ parametrized by arc-length, and when this does not lead to a confusion, 
we also denote by $\gamma$ the subset $\{\gamma(s),\, s\in \R\}$ of $\R^3.$\\
ii) If $\gamma$ is a quasiperiodic curve of period $\ell$ and pitch $a,$ then $\gamma$ is also a quasiperiodic curve of
period $n\,\ell$ and pitch $n\, a$, for any integer $n\geq 1.$ In the sequel, when we refer to ``\,the'' period
and ``\,the'' pitch 
of a quasiperiodic curve $\gamma$, we implicitly assume that $\gamma$ comes with a pre-assigned period $\ell$, which 
might not be its minimal one.
\end{rem}

\begin{defi}
Given two quasiperiodic curves $\gamma$ and $\Gamma$ in $\R^3,$ we define\footnote{The P in $\dP$ stands for
parametric.} 
$$
d_{\mathcal{P}}(\Gamma,\gamma) := \inf_{p\in \mathcal{P}(L,\ell)} \sup_{s \in \R} \Big| \Gamma\big(s\big) -
 \gamma\big(p(s)
\big)\Big| =  \inf_{p \in \mathcal{P}(\ell,L)} \sup_{s \in \R} \Big| \gamma\big(s\big)-
\Gamma\big(p(s)\big) 
\Big|,
$$ 
where $\ell$ and $L$ are the periods of $\gamma$ and $\Gamma$ respectively, and for $0<s_0,s_1$,  
$$
\mathcal{P}(s_0,s_1) = \{ p\in \mathcal{C}(\R,\R) \text{ s.t. } p(s+s_0)=p(s)+s_1,\ \forall s\in
\R\}.
$$
\end{defi}
\noindent One may verify that $\dP(\Gamma,\gamma)<+\infty$ if and only if $\Gamma$ and $\gamma$ have the same pitch, and that
the restriction of $\dP$ to quasiperiodic curves in $\R^3$ of fixed pitch is a distance on that space. Also, 
$$
d_\mathcal{H}(\Gamma,\gamma) \leq \dP(\Gamma,\gamma),
$$  
where $d_\mathcal{H}$ is the Hausdorff distance.

\smallskip

In the sequel, $\gamma \in \mathcal{C}^2(\R,\R^3)$ is a quasiperiodic curve of period $\ell>0$ and pitch $a\in \R^3,$ and
$\Gamma$ is a quasiperiodic curve of period $L>0$ and with the same pitch as $\gamma.$ 
We denote by $r_\gamma$ the minimal radius of curvature of $\gamma$\, :
$$
r_\gamma := \left(\max_{s\in [0,\ell]}|\gamma''(s)|\right)^{-1}\:  \in (0,+\infty],
$$
and by $\mathcal{C}_\gamma$ the tubular neighborhood\, :
$$
\mathcal{C}_\gamma := \left\{ x \in \R^3,\: \text{ s.t. }\: d(x,\gamma) < r_\gamma/8\right\}.
$$

\begin{lem}\label{lem:uniqprojloc}
Let $s_0\in \R$ and $x_0\in \R^3$ be such that $|x_0-\gamma(s_0)|<r_\gamma/8.$ There
exists a unique $\xi_0=:\xi(s_0,x_0)\in \R$ such that
\begin{enumerate}
\item
$\xi_0 \in (s_0-r_\gamma/4,s + r_\gamma/4)$,
\item
$\Big( x_0-\gamma\big(\xi_0\big) \Big) \cdot \gamma'\big(\xi_0\big) = 0$.
\end{enumerate}
Moreover, the function $\xi\,:\, \Xi_\gamma \to \R,\ (s,x) \mapsto \xi(s,x)$ defined on the open set
$$
\Xi_\gamma := \left\{ (s,x)\in \R^4 \text{ s.t. } |x-\gamma(s)|<r_\gamma/8\right\} 
$$
is of class $\mathcal{C}^1$ on $\Xi_\gamma$ and for every $(s,x)\in \Xi_\gamma$ we
have\footnote{One may believe at first sight that {\it 3} and {\it 4} are in contradiction if $a=0,$ but notice that
 the domain $\Xi_\gamma$ does not contain the whole segment joining $(s,x)$ and $(s+\ell,x).$}
\begin{enumerate}
\setcounter{enumi}{2}
\item
$(s+\ell,x+a)\in \Xi_\gamma$ and $\xi(s+\ell,x+a) = \xi(s,x) +
\ell,$ 
\item
$\partial_s\xi(s,x) = 0,$ 
\item
$D_x \xi(s,x) = \left(1 + \frac{(x-\gamma(\xi(s,x)))\cdot
\gamma''(\xi(s,x))}{1-(x-\gamma(\xi(s,x)))\cdot \gamma''(\xi(s,x))}\right) 
\gamma'(\xi(s,x)).
$
\end{enumerate}
\end{lem}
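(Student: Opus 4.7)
The plan is to recognize the orthogonality condition as the zero set of
$\Phi(s,x,\xi) := (x - \gamma(\xi)) \cdot \gamma'(\xi)$
(note that $\Phi$ does not actually depend on $s$, which only serves to restrict the branch) and to proceed in three stages: first a direct monotonicity argument establishes the existence and uniqueness of $\xi_0$ in the interval $(s_0 - r_\gamma/4, s_0 + r_\gamma/4)$; next the implicit function theorem upgrades this to $\mathcal{C}^1$ regularity of $\xi$ on $\Xi_\gamma$; finally, items 3--5 follow from uniqueness combined with the quasiperiodicity of $\gamma$, and from a routine implicit differentiation.

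For the first stage, set $\phi(\xi) := \Phi(s_0, x_0, \xi)$ and compute, using $|\gamma'| \equiv 1$, that $\phi'(\xi) = -1 + (x_0 - \gamma(\xi)) \cdot \gamma''(\xi)$. On the closed interval $J := [s_0 - r_\gamma/4, s_0 + r_\gamma/4]$, arc-length parametrization gives $|\gamma(\xi) - \gamma(s_0)| \le r_\gamma/4$, hence $|x_0 - \gamma(\xi)| < r_\gamma/8 + r_\gamma/4 = 3r_\gamma/8$; since $|\gamma''(\xi)| \le 1/r_\gamma$ by definition of $r_\gamma$, we obtain $\phi'(\xi) \le -5/8$ on $J$. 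Strict monotonicity gives uniqueness. For existence, $|\phi(s_0)| = |(x_0 - \gamma(s_0)) \cdot \gamma'(s_0)| < r_\gamma/8$; integrating the bound on $\phi'$ yields $\phi(s_0 + r_\gamma/4) \le r_\gamma/8 - 5r_\gamma/32 = -r_\gamma/32 < 0$ and $\phi(s_0 - r_\gamma/4) \ge r_\gamma/32 > 0$, so the intermediate value theorem supplies the unique zero $\xi_0 \in (s_0 - r_\gamma/4, s_0 + r_\gamma/4)$.

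The second stage is immediate: since $\gamma \in \mathcal{C}^2$, the function $\Phi$ is $\mathcal{C}^1$ on $\R \times \R^3 \times \R$, and the computation above shows $\partial_\xi \Phi(s_0, x_0, \xi_0) \le -5/8 \neq 0$. The implicit function theorem produces a local $\mathcal{C}^1$ solution, and the uniqueness already established inside $J$ ensures these local solutions agree on overlaps and thus fit together into a single $\mathcal{C}^1$ function $\xi$ on $\Xi_\gamma$. For item 3, the quasiperiodicity $\gamma(\cdot + \ell) = \gamma(\cdot) + a$ implies $\gamma'(\cdot + \ell) \equiv \gamma'(\cdot)$, so $\Phi(s + \ell, x + a, \xi(s, x) + \ell) = \Phi(s, x, \xi(s, x)) = 0$; since $\xi(s, x) + \ell$ lies in the admissible interval for $(s + \ell, x + a)$, uniqueness forces $\xi(s + \ell, x + a) = \xi(s, x) + \ell$. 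For items 4 and 5, differentiate $\Phi(s, x, \xi(s, x)) = 0$ implicitly: the absence of $s$ dependence in $\Phi$ immediately gives $\partial_s \xi \equiv 0$, while $D_x \Phi = \gamma'(\xi)^{\top}$ together with $\partial_\xi \Phi = -(1 - (x - \gamma(\xi)) \cdot \gamma''(\xi))$ yields
$D_x \xi(s,x) = \gamma'(\xi(s,x)) / (1 - (x - \gamma(\xi(s,x))) \cdot \gamma''(\xi(s,x)))$,
which is algebraically identical to the formula in the statement.

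I do not foresee a substantive obstacle. The single point requiring care is the quantitative choice of the constants $r_\gamma/8$ and $r_\gamma/4$ in the definitions of $\Xi_\gamma$ and of the admissible $\xi$-interval respectively: they are tuned precisely so that the decisive bound $\phi' \le -5/8$ holds with a uniform positive margin, which simultaneously delivers uniqueness and, through the endpoint sign check, existence.
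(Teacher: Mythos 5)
Your proof is correct, and it takes a genuinely cleaner route through the existence and uniqueness stage than the paper does. You work directly with the scalar function $\phi(\xi) := (x_0-\gamma(\xi))\cdot\gamma'(\xi)$ and establish the uniform bound $\phi' < -5/8$ on $[s_0-r_\gamma/4,\, s_0+r_\gamma/4]$; this single monotonicity estimate yields uniqueness immediately, and combined with $|\phi(s_0)|<r_\gamma/8$ it forces the sign change at the endpoints, giving existence by the intermediate value theorem. The paper instead analyzes the distance function $h(z) := |\gamma(z)-x_0|$: existence is proved by a several-case argument (first treating $(x_0-\gamma(s_0))\cdot\gamma'(s_0)=0$ separately, then the case $h$ vanishes, then locating the first parameter where $h'\geq -1/2$ and deriving a contradiction via a lower bound on $h''$), and uniqueness is shown through a separate argument that $h$ strictly increases in a long window around any critical point. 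Your approach avoids the special casing around $h=0$ (that is, $x_0$ on the curve) and condenses both existence and uniqueness into one estimate, precisely because the orthogonality condition itself — rather than the distance $h$, whose derivative can vanish at zero crossings — is strictly monotone under the stated hypotheses. The regularity and periodicity stage (implicit function theorem on $(x-\gamma(z))\cdot\gamma'(z)$, plus quasiperiodicity of $\gamma$ and implicit differentiation to get items 3--5) is essentially identical in both proofs, and you correctly observe that your form of $D_x\xi$, namely $\gamma'(\xi)\big/\bigl(1-(x-\gamma(\xi))\cdot\gamma''(\xi)\bigr)$, agrees algebraically with the form in the statement.
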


\begin{cor}\label{cor:repar}
Assume that $\dP(\Gamma,\gamma)<r_\gamma/8$ and let $p \in \mathcal{P}(L,\ell)$ be such that
$$
\sup_{s\in \R}|\Gamma(s)-\gamma(p(s))| <
r_\gamma/8.
$$
The function $\sigma\,:\, \R \to \R, \ s\mapsto \sigma(s):=\xi(p(s),\Gamma(s))$ is the unique continuous
function which satisfies, for any $s$ in $\R,$
\begin{enumerate}
\item
$\sigma(s+L) = \sigma(s)+\ell$,
\item
$\Big| \Gamma\big(s\big)-\gamma\big(\sigma(s)\big)\Big|
< r_\gamma/8$, 
\item
$\Big( \Gamma\big(s\big)-\gamma\big(\sigma(s)\big) \Big) \cdot \gamma'\big(\sigma(s)\big) = 0$,
\item
$\big| \sigma(s) - p(s) \big| < r_\gamma/4.$
\end{enumerate}
\end{cor}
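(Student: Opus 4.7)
The plan is to verify that $\sigma(s) := \xi(p(s), \Gamma(s))$ is well-defined and continuous, to check properties 1, 3, 4 directly from Lemma~\ref{lem:uniqprojloc}, to establish property 2 by a critical-point analysis of $\tau \mapsto |\Gamma(s) - \gamma(\tau)|^2$, and to deduce uniqueness from the uniqueness clause of Lemma~\ref{lem:uniqprojloc} applied pointwise.

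For the existence part, the hypothesis $\sup_{s}|\Gamma(s)-\gamma(p(s))| < r_\gamma/8$ gives $(p(s), \Gamma(s)) \in \Xi_\gamma$ for every $s \in \R$, so $\sigma$ is well-defined, and its continuity follows from the $\mathcal{C}^1$ regularity of $\xi$ on $\Xi_\gamma$ together with the continuity of $p$ and $\Gamma$. Property 3 (orthogonality) and property 4 ($|\sigma(s)-p(s)|<r_\gamma/4$) are precisely items \textit{2} and \textit{1} of Lemma~\ref{lem:uniqprojloc} at $(s_0, x_0) = (p(s), \Gamma(s))$. For property 1, I would combine $p(s+L) = p(s) + \ell$ (from $p \in \mathcal{P}(L,\ell)$) with the quasiperiodicity $\Gamma(s+L) = \Gamma(s) + a$ and invoke item \textit{3} of the Lemma to conclude that $\sigma(s+L) = \xi(p(s)+\ell, \Gamma(s)+a) = \xi(p(s), \Gamma(s)) + \ell = \sigma(s) + \ell$.

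The main obstacle is property 2, which is not immediate from Lemma~\ref{lem:uniqprojloc} since that lemma provides orthogonality but not distance minimization. I would introduce $f_s(\tau) := |\Gamma(s) - \gamma(\tau)|^2$, which satisfies $f_s(p(s)) < (r_\gamma/8)^2$ by hypothesis and whose only critical point on $(p(s) - r_\gamma/4, p(s) + r_\gamma/4)$ is $\sigma(s)$ by the uniqueness clause of the Lemma. Property 4 and the triangle inequality yield the preliminary bound $|\Gamma(s) - \gamma(\sigma(s))| < r_\gamma/8 + r_\gamma/4 < r_\gamma$; combined with $|\gamma''| \leq 1/r_\gamma$ and the identity $f_s''(\tau) = 2 - 2(\Gamma(s) - \gamma(\tau)) \cdot \gamma''(\tau)$, this forces $f_s''(\sigma(s)) > 0$, so $\sigma(s)$ is a strict local minimum of $f_s$. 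A unique critical point that is a strict local minimum on an open interval must be the global minimum there: otherwise, if $f_s(p(s)) < f_s(\sigma(s))$, then since $f_s$ drops strictly below $f_s(\sigma(s))$ on both sides of $\sigma(s)$ close by, the intermediate value theorem produces a point $\tau_0$ strictly between $\sigma(s)$ and $p(s)$ with $f_s(\tau_0)=f_s(\sigma(s))$, and Rolle's theorem on $[\sigma(s),\tau_0]$ then yields a second critical point, contradicting uniqueness. Therefore $f_s(\sigma(s)) \leq f_s(p(s)) < (r_\gamma/8)^2$, which is property 2.

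For uniqueness, any continuous $\tilde\sigma$ fulfilling the four conditions satisfies items \textit{1} and \textit{2} of Lemma~\ref{lem:uniqprojloc} at $(p(s), \Gamma(s)) \in \Xi_\gamma$ for each $s$, so the uniqueness clause of that lemma forces $\tilde\sigma(s) = \xi(p(s), \Gamma(s)) = \sigma(s)$ pointwise.
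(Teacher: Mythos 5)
Your proof is correct and amounts to carefully spelling out what Lemma~\ref{lem:uniqprojloc} already encodes; the paper dismisses the corollary as ``straightforward'' without writing anything out, so there is no contrasting argument to compare against. The one real piece of content not an immediate citation of the lemma is property~2, and your critical-point argument handles it. There is a slip in the wording there: since $\sigma(s)$ is a strict local \emph{minimum} of $f_s$, the function rises strictly above $f_s(\sigma(s))$ on both sides nearby, rather than ``drops strictly below'' as you wrote. The IVT/Rolle contradiction you draw is nevertheless sound once this is fixed (near $\sigma(s)$ one has $f_s>f_s(\sigma(s))$, at $p(s)$ one has $f_s<f_s(\sigma(s))$ by the assumption for contradiction, IVT gives $\tau_0$ with $f_s(\tau_0)=f_s(\sigma(s))$, and Rolle on the segment between $\sigma(s)$ and $\tau_0$ produces a second critical point inside $(p(s)-r_\gamma/4,p(s)+r_\gamma/4)$). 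A slightly cleaner route to the same conclusion: $f_s'(\sigma(s))=0$, $f_s''(\sigma(s))>0$, and $f_s'$ has no other zero on the interval, so $f_s'$ keeps the positive sign it acquires immediately past $\sigma(s)$ in the direction of $p(s)$; hence $f_s$ is strictly increasing on that side, giving $f_s(\sigma(s))<f_s(p(s))<(r_\gamma/8)^2$ directly. All remaining steps (the verification of properties 1, 3, 4 from items 3, 2, 1 of the lemma, and the pointwise uniqueness from the lemma's uniqueness clause) are correct as written.
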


Any continuous function $\sigma$ that satisfies $1$, $2$ and $3$ in Corollary \ref{cor:repar} is called a 
reparametrization of $\gamma$ for $\Gamma.$ It also follows from Lemma \ref{lem:uniqprojloc} that

\begin{cor}\label{cor:reparestim} Assume that $\dP(\Gamma,\gamma)<r_\gamma/8.$
Any reparametrization $\sigma$ of $\gamma$ for $\Gamma$ is a Lipschitz function on $\R$, and for almost every $s\in
\R$ we have the formula
\begin{equation}\label{eq:ravine0}
\sigma'(s) = \left(1 + \frac{(\Gamma(s)-\gamma(\sigma(s)))\cdot
\gamma''(\sigma(s))}{1-(\Gamma(s)-\gamma(\sigma(s)))\cdot \gamma''(\sigma(s))}\right)  \Gamma'(s) \cdot
\gamma'(\sigma(s)).
\end{equation}
In particular,
\begin{equation}\label{eq:ravine1}
\sigma'(s) \leq 1+ \frac{2}{r_\gamma} \left|\Gamma(s)-\gamma(\sigma(s))\right| \leq
1+ \frac{2}{r_\gamma}\|\Gamma-\gamma\circ\sigma\|_{\infty}.
\end{equation}
\end{cor}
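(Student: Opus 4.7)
The plan is to exploit the $\mathcal{C}^1$ regularity of the projection function $\xi$ provided by Lemma \ref{lem:uniqprojloc}, together with the Lipschitz regularity of $\Gamma$ (since it is parametrized by arc-length, it is $1$-Lipschitz with $|\Gamma'(s)|=1$ almost everywhere). First, by Corollary \ref{cor:repar} one may pick $p\in \mathcal{P}(L,\ell)$ with $\sup_s |\Gamma(s)-\gamma(p(s))| < r_\gamma/8$ so that $\sigma(s) = \xi(p(s),\Gamma(s))$. The crucial observation is item \textit{4} in Lemma \ref{lem:uniqprojloc}, namely $\partial_s \xi \equiv 0$ on $\Xi_\gamma$: this implies that on any sufficiently small neighborhood of a point $(p(s_0),\Gamma(s_0))\in \Xi_\gamma$, the function $\xi$ depends only on its $x$-variable, so that locally $\sigma(s) = \tilde\xi(\Gamma(s))$ for a $\mathcal{C}^1$ function $\tilde\xi$ of the spatial variable alone. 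Hence the merely continuous function $p$ disappears from the local expression for $\sigma$, and $\sigma$ inherits the Lipschitz regularity of $\Gamma$.

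Once Lipschitz regularity is established, at any $s$ where $\Gamma$ is differentiable (that is, almost everywhere) I would apply the chain rule to $\sigma(s) = \tilde\xi(\Gamma(s))$ to get $\sigma'(s) = D_x\xi(p(s),\Gamma(s))\cdot \Gamma'(s)$, and then substitute the explicit formula for $D_x\xi$ supplied by item \textit{5} of Lemma \ref{lem:uniqprojloc}. This substitution produces precisely identity \eqref{eq:ravine0}.

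For the bound \eqref{eq:ravine1}, set $X := (\Gamma(s)-\gamma(\sigma(s)))\cdot \gamma''(\sigma(s))$. Since $|\gamma''|\leq 1/r_\gamma$ and $|\Gamma(s)-\gamma(\sigma(s))|<r_\gamma/8$, one has $|X|\leq 1/8$, so $1-X\geq 7/8$. Combined with $\gamma'(\sigma(s))\cdot \Gamma'(s)\leq 1$ (Cauchy--Schwarz together with $|\gamma'|=|\Gamma'|=1$ a.e.), the identity $1+\tfrac{X}{1-X}=\tfrac{1}{1-X}$ yields $\sigma'(s)\leq \tfrac{1}{1-X}\leq 1+\tfrac{8}{7}|X|\leq 1+\tfrac{2}{r_\gamma}|\Gamma(s)-\gamma(\sigma(s))|$, which is the first inequality in \eqref{eq:ravine1}; the second is the trivial uniform bound.

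The main conceptual obstacle is the Lipschitz regularity step, because \emph{a priori} $\sigma$ is built from the only continuous function $p$ and could a priori oscillate wildly; the argument hinges on noticing that $\partial_s \xi =0$ eliminates $p$ altogether from a local expression, and that the residual dependence $\Gamma\mapsto \tilde\xi(\Gamma)$ is $\mathcal{C}^1\circ\mathrm{Lip}$. Everything else is then a direct chain-rule computation together with the quantitative smallness of $|\Gamma-\gamma\circ\sigma|$ afforded by the hypothesis $\dP(\Gamma,\gamma)<r_\gamma/8$.
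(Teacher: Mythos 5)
Your proof is correct and follows essentially the same route as the paper. The one small imprecision worth noting is in how you obtain the representation $\sigma(s)=\xi(p(s),\Gamma(s))$: for an \emph{arbitrary} reparametrization $\sigma$ (satisfying only conditions \textit{1}--\textit{3} of Corollary~\ref{cor:repar}), the clean way to justify this is to take $p=\sigma$ itself. Indeed $\sigma\in\mathcal{P}(L,\ell)$, condition \textit{2} gives $\sup_s|\Gamma(s)-\gamma(\sigma(s))|<r_\gamma/8$, and then the uniqueness in Lemma~\ref{lem:uniqprojloc} together with condition~\textit{3} forces $\xi(\sigma(s),\Gamma(s))=\sigma(s)$; this is precisely the self-referential identity the paper invokes. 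From there your argument --- $\partial_s\xi\equiv 0$ so $\xi$ is locally a $\mathcal{C}^1$ function of $x$ alone, hence $\sigma$ inherits the Lipschitz regularity of $\Gamma$, followed by the chain rule and the elementary bound on $X:=(\Gamma(s)-\gamma(\sigma(s)))\cdot\gamma''(\sigma(s))$ --- matches the paper's proof.
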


When each point $x$ in $\Cg$ has a unique orthogonal projection $P(x)$ on $\gamma$ (this can happen only if 
$\gamma$ has no self-intersection), then there exist a unique reparametrization $\sigma$ of $\gamma$ for
$\Gamma$ (modulo a constant multiple of $\ell$ if $a=0$), and it is determined
 by
$$
\gamma(\sigma(s)) = P\big(\Gamma(s)\big).
$$   

We also have

\begin{lem}\label{lem:estimhaus}
If $\sigma$ is a reparametrization of $\gamma$ for $\Gamma$,
then 
$$
\sup_{s\in \R} \big| \sigma(s) - (s+\sigma(0))\big| \leq 2\tfrac{L}{r_\gamma}\, \| \Gamma -
\gamma\circ\sigma\|_\infty 
+ (L - \ell)_+, 
$$
where 
$(L - \ell)_+ := \max(L - \ell,0).$
\end{lem}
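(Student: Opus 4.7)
The plan is to reduce the lemma to a one-variable integral estimate on $\sigma'$, relying on the pointwise upper bound $\sigma'(s) \leq 1 + (2/r_\gamma)\|\Gamma - \gamma\circ\sigma\|_\infty$ provided by Corollary \ref{cor:reparestim}. Setting $\delta := \|\Gamma - \gamma\circ\sigma\|_\infty$ and $f(s) := \sigma(s) - s - \sigma(0)$, I observe that $f(0) = 0$ and, by condition 1 of Corollary \ref{cor:repar}, $f(s+L) = f(s) + (\ell - L)$. Hence the main task is to bound $|f|$ uniformly on a single period $[0, L]$; periodicity then carries the estimate to all of $\R$ in the regime $L = \ell$ relevant for the application to Theorem \ref{thm:main}, where both $\gamma_u$ and $\gamma_v$ inherit the common period from $T^1$.

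For the \emph{upper} bound, I would just integrate \eqref{eq:ravine1}: for $s \in [0, L]$,
\[
f(s) \;=\; \int_0^s \bigl(\sigma'(z) - 1\bigr)\, dz \;\leq\; \frac{2s\,\delta}{r_\gamma} \;\leq\; \frac{2L\,\delta}{r_\gamma}.
\]
For the \emph{lower} bound, the key trick is to convert the one-sided pointwise bound on $\sigma'$ into a two-sided estimate on the accumulated drift $f$ by exploiting the global normalization $\sigma(L) - \sigma(0) = \ell$, i.e.\ $\int_0^L (\sigma'(z) - 1)\, dz = \ell - L$. Rewriting the integral from $0$ to $s$ as the full period minus the tail,
\[
f(s) \;=\; (\ell - L) \;-\; \int_s^L \bigl(\sigma'(z) - 1\bigr)\, dz \;\geq\; (\ell - L) \;-\; \frac{2(L-s)\,\delta}{r_\gamma} \;\geq\; (\ell - L) - \frac{2L\,\delta}{r_\gamma},
\]
again by \eqref{eq:ravine1}. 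Putting the two estimates together gives $|f(s)| \leq (L - \ell)_+ + 2L\delta/r_\gamma$ on $[0, L]$: the upper bound already lies below $2L\delta/r_\gamma$, while the lower bound is at least $-(L-\ell)_+ - 2L\delta/r_\gamma$ (when $L < \ell$ the raw lower bound is nonnegative modulo the $-2L\delta/r_\gamma$ term, so it is a fortiori bounded below by $-(L-\ell)_+ - 2L\delta/r_\gamma$).

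It then remains to extend the bound from $[0, L]$ to all of $\R$. Using $f(s+L) = f(s) + (\ell - L)$, when $L = \ell$ the function $f$ is genuinely $L$-periodic and $\sup_{\R} |f| = \sup_{[0,L]} |f|$, yielding the stated inequality.

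The only conceptually delicate point is the \emph{asymmetry} in the estimate supplied by Corollary \ref{cor:reparestim}: formula \eqref{eq:ravine0} yields directly only an upper bound $\sigma'(s) \leq 1 + 2\delta/r_\gamma$, and no comparable pointwise \emph{lower} bound of the form $\sigma'(s) \geq 1 - O(\delta/r_\gamma)$ is available without further information on the angle $\Gamma'(s) \cdot \gamma'(\sigma(s))$. The integration-and-complement trick above is precisely what turns this one-sided pointwise estimate into the desired two-sided control on the drift $f$, at the negligible cost of the correction $(L-\ell)_+$ measuring the mismatch in the total arc-lengths of the two curves per period.
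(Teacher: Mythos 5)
Your proof is correct and follows essentially the same route as the paper's: you integrate the one-sided estimate $\sigma'\leq 1 + (2/r_\gamma)\|\Gamma-\gamma\circ\sigma\|_\infty$ forward from $0$ for the upper bound on $[0,L]$, and backward from $L$ (using $\sigma(L)=\sigma(0)+\ell$) for the lower bound, exactly mirroring inequalities \eqref{eq:wagner1}--\eqref{eq:wagner3}. Your remark that the drift $f(s)=\sigma(s)-s-\sigma(0)$ is unbounded on $\R$ when $L\neq\ell$ is a fair reading; the paper's proof likewise only establishes the bound on a single period, and indeed the lemma is invoked later only in the form $\sup_{s\in[0,L]}$.
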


We turn now to a different measure of the discrepancy between $\Gamma$ and $\gamma$, which is better 
suited to our needs than $\dP.$ 

\begin{defi}\label{defi:Fglobal}
For $r>0$, we set
\begin{equation}\label{eq:defF}
F_{\gamma,r}(\Gamma):=\inf_{\sigma}F_{\gamma,\sigma,r}(\Gamma) \equiv \inf_{\sigma}\int_0^L
\mathcal{F}(\Gamma,\gamma,\sigma,r,s)\, ds,    
\end{equation}
where
$$
\mathcal{F}(\Gamma,\gamma,\sigma,r,s) :=  1 -
f \big( | \Gamma(s) - \gamma(\sigma(s)) |^2\big) \gamma'(\sigma(s))\cdot\Gamma'(s), 
$$
the function $f\equiv f_r \,:\, [0,+\infty) \to [0,+\infty)$ is given by 
$$
f(d^2) := \left\{ 
\begin{array}{ll}
\displaystyle 1- \big(\tfrac{d}{r}\big)^2, & \ \text{for } \,0\:\leq d^2 \leq r^2,\\
\displaystyle \ 0, & \ \text{for } d^2\geq r^2,
\end{array}\right.
$$
and the infimum in \eqref{eq:defF} is taken over all possible reparametrizations of $\gamma$ for $\Gamma.$  
\end{defi}
\noindent It follows from Corollary \ref{cor:repar} that $F_{\gamma,r}(\Gamma) < +\infty$ whenever 
$\dP(\Gamma,\gamma)<r_\gamma/8.$ On the other hand, 
 since there does not exist any reparametrization of $\gamma$ for $\Gamma$ in that case, 
one has $F_{\gamma,r}(\Gamma) = +\infty$ whenever $d_\mathcal{H}(\Gamma,\gamma)\geq r_\gamma/8.$  If
$F_{\gamma,r}(\Gamma)  <+\infty$ and each point $x$ in $\Cg$ has a unique orthogonal projection $P(x)$ on $\gamma$,  
one can rewrite $F_{\gamma,r}(\Gamma)$ in the slightly more geometric form
$$
F_{\gamma,r}(\Gamma) = \int_{\Gamma} 1 - f(d^2(x,\gamma)) \tau_\gamma(P(x))\cdot\tau_{\Gamma}(x) \, d\mathcal{H}^1(x),
$$
where $\tau_\Gamma$ and $\tau_\gamma$ denote the oriented tangent vectors to $\Gamma$ and $\gamma$ respectively. 
Notice finally that $0\leq f\leq 1$ and that $f(d^2)=1$ if and only if $d^2=0.$ In particular, the integrand in
\eqref{eq:defF} is non negative and moreover
$$
\mathcal{F}(\Gamma,\gamma,\sigma,r,s) = 0 \quad \Longleftrightarrow \quad \left( \Gamma(s)=\gamma(\sigma(s)) \text{ and } 
\Gamma'(s)=\gamma'(\sigma(s))\right).
$$

\medskip

The functional $F_{\gamma,r}$, which plays a central role in our analysis, can be related to
more standard discrepancy measures, like the distances $d_\mathcal{H}$ or $\dP$ between the curves, 
or the $L^2$ distance between the parametrizations of their tangents. We devote the remaining part of this section to such relations, which we
state one after the other before actually proving them.

\begin{lem}\label{lem:estimhausbis}
If $\sigma$ is a reparametrization of $\gamma$ for $\Gamma$,
then for $r\geq \|\Gamma-\gamma\circ\sigma\|_\infty$
$$
\| \Gamma -
\gamma\circ\sigma\|_\infty^2 \leq \left( \sqrt{2}r + \tfrac{r^2}{L}\right)\,
F_{\gamma,\sigma,r}(\Gamma).
$$
In particular,
$$
\dP^2(\Gamma,\gamma) \leq \left( \sqrt{2}r + \tfrac{r^2}{L}\right)\,
F_{\gamma,r}(\Gamma).
$$ 
\end{lem}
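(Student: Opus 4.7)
The plan is to analyze pointwise the Lipschitz, $L$-periodic function $h(s) := |\Gamma(s)-\gamma(\sigma(s))|^2$ and the quantity $\alpha(s) := \gamma'(\sigma(s))\cdot\Gamma'(s)$. Periodicity of $h$ uses that $\Gamma$ and $\gamma$ share the same pitch together with $\sigma(s+L)=\sigma(s)+\ell$; the hypothesis $r\geq\|\Gamma-\gamma\circ\sigma\|_\infty$ gives $h\leq r^2$ everywhere, so that $f(h)=1-h/r^2$ and the integrand of $F_{\gamma,\sigma,r}(\Gamma)$ factors as
\begin{equation*}
\mathcal{F}(s) = 1-\alpha+\alpha h/r^2 = (1-\alpha)(1-h/r^2)+h/r^2.
\end{equation*}
The whole argument rests on two pointwise inequalities: first, $\mathcal{F}\geq h/r^2$, which is immediate from the factored form since $\alpha\leq 1$ and $h\leq r^2$; second, $|h'|^2\leq 8\,h\,\mathcal{F}$ almost everywhere.

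To prove the second, I would differentiate $h$ and use the defining orthogonality $(\Gamma-\gamma\circ\sigma)\cdot(\gamma'\circ\sigma)=0$ of a reparametrization to kill the $\sigma'$ contribution, obtaining $h'=2(\Gamma-\gamma\circ\sigma)\cdot\Gamma'$. Decomposing $\Gamma'=\alpha\,(\gamma'\circ\sigma)+w$ with $w\perp\gamma'\circ\sigma$ and $|w|^2=1-\alpha^2$ (using that both curves are arclength parametrized), this yields $|h'|\leq 2\sqrt{h(1-\alpha^2)}$. It then suffices to show $1-\alpha^2\leq 2\mathcal{F}$, i.e.\ $(1-\alpha)^2+2\alpha h/r^2\geq 0$; this is clear for $\alpha\geq 0$ and follows for $\alpha<0$ from $(1-\alpha)^2\geq 1\geq 2|\alpha|h/r^2$, which itself is a consequence of $|\alpha|\leq 1$ and $h\leq r^2$.

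Granted these two pointwise estimates, integration of the first yields $\int_0^L h\,ds\leq r^2\,F_{\gamma,\sigma,r}(\Gamma)$, while the second combined with Cauchy--Schwarz gives
\begin{equation*}
\int_0^L|h'|\,ds\leq 2\sqrt{2}\int_0^L\!\sqrt{h\mathcal{F}}\,ds\leq 2\sqrt{2}\,\Bigl(\int_0^L h\,ds\Bigr)^{\!1/2}\Bigl(\int_0^L\mathcal{F}\,ds\Bigr)^{\!1/2}\leq 2\sqrt{2}\,r\,F_{\gamma,\sigma,r}(\Gamma).
\end{equation*}
A standard total-variation argument for the continuous $L$-periodic function $h$ yields $\max h-\min h\leq\frac{1}{2}\int_0^L|h'|\,ds$, and clearly $\min h\leq\frac{1}{L}\int_0^L h\,ds$, so $\|\Gamma-\gamma\circ\sigma\|_\infty^2 = \max h\leq(\sqrt{2}\,r+r^2/L)\,F_{\gamma,\sigma,r}(\Gamma)$, which is the first claim. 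For the $\dP^2$ bound, every reparametrization $\sigma$ belongs to $\mathcal{P}(L,\ell)$, hence $\dP(\Gamma,\gamma)\leq\|\Gamma-\gamma\circ\sigma\|_\infty$, and passing to the infimum over $\sigma$ on the right-hand side replaces $F_{\gamma,\sigma,r}$ by $F_{\gamma,r}$.

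The delicate step is the sharpness of the constant in $|h'|^2\leq 8\,h\,\mathcal{F}$: the cruder estimates $\mathcal{F}\geq(1-\alpha)/2$ and $1-\alpha^2\leq 2(1-\alpha)$ would give only $|h'|^2\leq 16\,h\,\mathcal{F}$ and the weaker constant $2r$ in place of $\sqrt{2}\,r$. The gain exploits the asymmetry between the two sign regimes of $\alpha$: when $\alpha<0$ one has $\mathcal{F}\geq 1$ directly, which compensates the use of $|\alpha|\leq 1$ when estimating $1-\alpha^2$.
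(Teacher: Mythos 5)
Your argument follows essentially the same path as the paper: introduce $h(s)=|\Gamma(s)-\gamma(\sigma(s))|^2$, use the orthogonality in the definition of a reparametrization to get $h'=2(\Gamma-\gamma\circ\sigma)\cdot\Gamma'$, decompose $\Gamma'$ along and orthogonal to $\gamma'\circ\sigma$, combine the two pointwise lower bounds $\mathcal{F}\geq h/r^2$ and $\mathcal{F}\geq\frac12|Q_s\Gamma'|^2$, and finish via the periodic oscillation estimate together with the mean bound. The only cosmetic difference is that you obtain $\int_0^L|h'|\leq 2\sqrt{2}\,rF$ by applying Cauchy--Schwarz at the integral level, whereas the paper multiplies the two pointwise inequalities directly (a pointwise geometric mean) to reach $|h'|\leq 2\sqrt{2}\,r\,\mathcal{F}$; both routes land on exactly the same constant.

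There is, however, one incorrect intermediate step. To justify $(1-\alpha)^2+2\alpha h/r^2\geq 0$ when $\alpha<0$ you invoke the chain $(1-\alpha)^2\geq 1\geq 2|\alpha|h/r^2$ and attribute the second inequality to $|\alpha|\leq 1$, $h\leq r^2$; but those only give $2|\alpha|h/r^2\leq 2$, and indeed the claim $1\geq 2|\alpha|h/r^2$ fails, e.g.\ for $\alpha=-1$, $h=r^2$. The inequality you actually need, $(1-\alpha)^2\geq 2|\alpha|h/r^2$ for $\alpha\leq 0$, is nevertheless true: expand $(1-\alpha)^2=1+2|\alpha|+\alpha^2\geq 2|\alpha|\geq 2|\alpha|h/r^2$. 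With this one-line repair your derivation of $1-\alpha^2\leq 2\mathcal{F}$, and hence the whole proof, is correct.
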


Combining Lemma \ref{lem:estimhaus} and Lemma \ref{lem:estimhausbis} we will obtain

\begin{lem}\label{lem:aller}
If $\sigma$ is a reparametrization of $\gamma$ for $\Gamma$,
then for $r\geq \|\Gamma-\gamma\circ\sigma\|_\infty$ 
$$
\int_0^L \big| \Gamma'(s) - \gamma'(\tfrac{\ell}{L}s+\sigma(0)) \big|^2\, ds \leq \left( 4 +
16\tfrac{L^3}{r_\gamma^4}\left(\sqrt{2}r+\tfrac{r^2}{L}\right) \right) F_{\gamma,\sigma,r}(\Gamma) +
16\tfrac{L}{r_\gamma^2}|L-\ell|^2.
$$
In particular, if $L=\ell$,
$$
\int_0^\ell \big| \Gamma'(s) - \gamma'(s+\sigma(0)) \big|^2\, ds \leq \left( 4 +
16\tfrac{\ell^3}{r_\gamma^4}\left( \sqrt{2}r+\tfrac{r^2}{\ell}\right) \right) F_{\gamma,\sigma,r}(\Gamma).
$$
\end{lem}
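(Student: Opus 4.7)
The plan is to decompose $\Gamma'(s) - \gamma'(\tfrac{\ell}{L}s + \sigma(0))$ through the intermediate $\gamma'(\sigma(s))$, controlling the two resulting pieces by $F_{\gamma,\sigma,r}(\Gamma)$ and by Lemma \ref{lem:estimhaus} respectively, and then using Lemma \ref{lem:estimhausbis} at the end to convert the remaining $\|\Gamma-\gamma\circ\sigma\|_\infty^2$ factors into multiples of $F$. A single triangle inequality $|a+b|^2 \leq 2|a|^2 + 2|b|^2$ gives
$$|\Gamma'(s) - \gamma'(\tfrac{\ell}{L}s+\sigma(0))|^2 \leq 2|\Gamma'(s) - \gamma'(\sigma(s))|^2 + 2|\gamma'(\sigma(s)) - \gamma'(\tfrac{\ell}{L}s+\sigma(0))|^2,$$
reducing the task to estimating these two pieces separately.

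For the first piece I would use the defining formula $\mathcal{F} = 1 - f(d^2)\,\Gamma'\cdot\gamma'(\sigma)$ combined with $|\Gamma'-\gamma'(\sigma)|^2 = 2-2\Gamma'\cdot\gamma'(\sigma)$ (both being unit vectors a.e.) to obtain the pointwise identity
$$|\Gamma'(s) - \gamma'(\sigma(s))|^2 = 2\mathcal{F}(\Gamma,\gamma,\sigma,r,s) - 2\bigl(1-f(d(s)^2)\bigr)\Gamma'(s)\cdot\gamma'(\sigma(s)),$$
where $d(s) := |\Gamma(s) - \gamma(\sigma(s))|$. The hypothesis $r \geq \|\Gamma-\gamma\circ\sigma\|_\infty$ ensures $d(s) \leq r$, so $1-f(d^2) = d^2/r^2$, and bounding $|\Gamma'\cdot\gamma'(\sigma)| \leq 1$ yields $|\Gamma'-\gamma'(\sigma)|^2 \leq 2\mathcal{F} + 2d^2/r^2$. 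Integrating over $[0,L]$ contributes $2F + (2L/r^2)\|\Gamma-\gamma\circ\sigma\|_\infty^2$.

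For the second piece the bound $\|\gamma''\|_\infty \leq 1/r_\gamma$ gives pointwise $|\gamma'(\sigma(s))-\gamma'(\tfrac{\ell}{L}s+\sigma(0))| \leq (1/r_\gamma)|\sigma(s)-\tfrac{\ell}{L}s-\sigma(0)|$. Since $\sigma \in \mathcal{P}(L,\ell)$, the map $s \mapsto \sigma(s) - (\ell/L)s$ is $L$-periodic, so it suffices to control the displacement on $s\in[0,L]$. I would split
$$\sigma(s) - \tfrac{\ell}{L}s - \sigma(0) = \bigl(\sigma(s)-s-\sigma(0)\bigr) + s\bigl(1-\tfrac{\ell}{L}\bigr),$$
treating the first summand by Lemma \ref{lem:estimhaus} and observing $|s(1-\ell/L)| \leq |L-\ell|$ on $[0,L]$. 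Squaring, integrating, and using $(a+b)^2 \leq 2a^2+2b^2$ then bounds the second piece by a multiple of $(L^3/r_\gamma^4)\|\Gamma-\gamma\circ\sigma\|_\infty^2 + (L/r_\gamma^2)|L-\ell|^2$.

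Assembling the two pieces and replacing each $\|\Gamma-\gamma\circ\sigma\|_\infty^2$ by $(\sqrt{2}r + r^2/L)F$ via Lemma \ref{lem:estimhausbis} produces the stated estimate; the case $L=\ell$ drops out by noting $|L-\ell|=0$. The main subtle point is that Lemma \ref{lem:estimhaus} does not give a bound uniform over $s \in \R$ when $L \neq \ell$, so exploiting the $L$-periodicity of $s \mapsto \sigma(s) - (\ell/L)s$ before applying it is essential. The secondary difficulty is bookkeeping of constants: one has to absorb the $O(L/r)$ contribution coming from the first piece after the bootstrap into the $O(L^3/r_\gamma^4)$ term coming from the second, which hinges on $r$ being at most of order $r_\gamma$, the regime in which $F$ is informative in the first place.
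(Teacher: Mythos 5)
Your global structure is the same as the paper's: split $\Gamma' - \gamma'(\tfrac{\ell}{L}\cdot + \sigma(0))$ through $\gamma'(\sigma)$, control the second piece via Lemma~\ref{lem:estimhaus} together with $\|\gamma''\|_\infty \le 1/r_\gamma$, and bootstrap the remaining $\|\Gamma - \gamma\circ\sigma\|_\infty^2$ factors into multiples of $F$ via Lemma~\ref{lem:estimhausbis}. Your second piece, once squared and integrated, yields exactly the paper's $16\tfrac{L^3}{r_\gamma^4}(\sqrt{2}r+\tfrac{r^2}{L})F + 16\tfrac{L}{r_\gamma^2}|L-\ell|^2$, and your observation about the $L$-periodicity of $s\mapsto \sigma(s)-(\ell/L)s$ is the right way to get a bound uniform over one period.

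The gap is in your treatment of the first piece. Your identity
$|\Gamma'-\gamma'(\sigma)|^2 = 2\mathcal{F} - 2\bigl(1-f(d^2)\bigr)\Gamma'\cdot\gamma'(\sigma)$
is correct, but discarding the sign of $\Gamma'\cdot\gamma'(\sigma)$ and estimating $|\Gamma'\cdot\gamma'(\sigma)|\le 1$ is too crude: it leaves an extra $2d^2/r^2$, so that after integrating and applying the triangle inequality you pick up $(4L/r^2)\|\Gamma-\gamma\circ\sigma\|_\infty^2$, and after the Lemma~\ref{lem:estimhausbis} bootstrap this contributes an additional $4(\sqrt{2}\tfrac{L}{r}+1)F$. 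The $O(L/r)$ term cannot be absorbed into the $O(L^3 r/r_\gamma^4)$ term coming from the second piece: absorption would require $r\gtrsim r_\gamma^2/L$, which is \emph{not} implied by $r\lesssim r_\gamma$ (take $r$ small or $r_\gamma$ large at fixed $L$ and the inequality reverses). So your stated ``absorption'' step fails, and your constant does not reduce to the one in the lemma.

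The fix is to not introduce the $d^2/r^2$ correction at all. Observe that $(1-f(d^2))\Gamma'\cdot\gamma'(\sigma)\ge 0$ whenever $\Gamma'\cdot\gamma'(\sigma)\ge 0$, so your identity already gives $|\Gamma'-\gamma'(\sigma)|^2 \le 2\mathcal{F}$ there; and in the complementary region $\Gamma'\cdot\gamma'(\sigma)<0$ one has $\mathcal{F}\ge 1$, which also controls $|\Gamma'-\gamma'(\sigma)|^2$ up to a universal factor. This is exactly inequality \eqref{eq:mozart} from the proof of Lemma~\ref{lem:estimhausbis}, which the paper reuses here to get the clean bound $\int_0^L|\Gamma'-\gamma'(\sigma)|^2\,ds \le 2F_{\gamma,\sigma,r}(\Gamma)$, and hence the $4$ in the stated constant.
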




Combining Corollary \ref{cor:repar}, the Cauchy-Schwarz inequality and Lemma \ref{lem:aller} we will obtain

\begin{lem}\label{lem:retour}
Assume that 
\begin{equation}\label{eq:miles}
\Gamma(0)=\gamma(0) \quad\text{and}\quad \sqrt{L}\big(\int_0^L |\Gamma'(s)-
\gamma'(\tfrac{\ell}{L}s)|^2\, ds\Big)^{1/2}  + |L-\ell| < r_\gamma/8.
\end{equation}
Then $\dP(\Gamma,\gamma)<r_\gamma/8$, there exists a unique reparametrization $\sigma$ of $\gamma$ for $\Gamma$ 
 that satisfies $\sigma(0)=0$, and we have, for any $r>0,$  
$$
F_{\gamma,\sigma,r}(\Gamma) \leq \left(1+ 2\tfrac{L^2}{r^2} + 16\tfrac{L^4}{r_\gamma^4} \right)\int_0^L |\Gamma'(s)-
\gamma'(\tfrac{\ell}{L}s)|^2\, ds +
\left( \tfrac{2}{r^2} + \tfrac{8}{r_\gamma^2} + 16\tfrac{L^2}{r_\gamma^4}\right)L|L-\ell|^2.
$$
\end{lem}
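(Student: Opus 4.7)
The plan is to take the obvious candidate $p(s):=(\ell/L)s\in\mathcal{P}(L,\ell)$, check that it already witnesses $\dP(\Gamma,\gamma)<r_\gamma/8$, construct $\sigma$ from $p$, and then estimate $F_{\gamma,\sigma,r}(\Gamma)$ directly. First I would write
$$
\Gamma(s)-\gamma(p(s)) = \int_0^s\!\big(\Gamma'(t)-\gamma'(p(t))\big)\,dt + \big(1-\tfrac{\ell}{L}\big)\int_0^s\!\gamma'(p(t))\,dt,
$$
and use Cauchy--Schwarz and the triangle inequality to obtain
$$
\sup_{s\in[0,L]}|\Gamma(s)-\gamma(p(s))| \leq \sqrt{L}\Big(\int_0^L\!|\Gamma'-\gamma'\circ p|^2\Big)^{1/2}+|L-\ell|,
$$
which lies strictly below $r_\gamma/8$ by \eqref{eq:miles}. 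Corollary \ref{cor:repar} then yields a unique continuous reparametrization $\sigma(s):=\xi(p(s),\Gamma(s))$ of $\gamma$ for $\Gamma$, and since $\Gamma(0)=\gamma(0)=\gamma(p(0))$ the choice $\tau=0$ already satisfies both conditions characterizing $\xi(p(0),\Gamma(0))$, so $\sigma(0)=0$ by uniqueness.

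For the main inequality I would split the integrand $\mathcal{F}(\Gamma,\gamma,\sigma,r,s)$ as
$$
\big(1-f(d(s)^2)\big) + f(d(s)^2)\big(1-\gamma'(\sigma(s))\cdot\Gamma'(s)\big),
$$
with $d(s):=|\Gamma(s)-\gamma(\sigma(s))|$. Using $1-f(d^2)\leq d^2/r^2$ for every $r>0$, together with $1-\tau_1\cdot\tau_2=\tfrac12|\tau_1-\tau_2|^2$ for unit vectors and $0\leq f\leq 1$, this is bounded by $d(s)^2/r^2 + \tfrac12|\gamma'(\sigma(s))-\Gamma'(s)|^2$. The task is now to control $d(s)$ and the angular error in terms of the pointwise errors at $p(s)$. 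For this I would analyze the scalar function $g_s(\tau):=(\Gamma(s)-\gamma(\tau))\cdot\gamma'(\tau)$, whose derivative $g_s'(\tau)=-1+(\Gamma(s)-\gamma(\tau))\cdot\gamma''(\tau)$ is uniformly bounded away from $0$ on the segment joining $p(s)$ to $\sigma(s)$, because Step~1 keeps $|\Gamma(s)-\gamma(\tau)|$ small on that segment while $|\gamma''|\leq 1/r_\gamma$. Since $g_s(\sigma(s))=0$, the mean value theorem produces $|\sigma(s)-p(s)|\leq C\,|\Gamma(s)-\gamma(p(s))|$ with an absolute constant, whence $d(s)\leq (1+C)|\Gamma(s)-\gamma(p(s))|$ and $|\gamma'(\sigma(s))-\gamma'(p(s))|\leq (C/r_\gamma)|\Gamma(s)-\gamma(p(s))|$ follow from the curvature bound on $\gamma$.

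It remains to integrate. Squaring the Cauchy--Schwarz argument of Step~1 yields
$$
\int_0^L\!|\Gamma(s)-\gamma(p(s))|^2\,ds \leq L^2\!\int_0^L\!|\Gamma'-\gamma'\circ p|^2\,ds + \tfrac{2}{3}L(L-\ell)^2,
$$
and the pointwise estimate $|\gamma'(\sigma)-\Gamma'|^2\leq 2|\gamma'(\sigma)-\gamma'(p)|^2+2|\gamma'(p)-\Gamma'|^2$ converts the angular contribution into the same two quantities. Since $p(s)=(\ell/L)s$, the integral $\int_0^L|\Gamma'-\gamma'\circ p|^2\,ds$ is precisely the one appearing in \eqref{eq:miles}, so collecting the two contributions produces an inequality of the announced shape. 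The only genuinely analytic input is the perturbation argument for $g_s$ coming from Lemma \ref{lem:uniqprojloc}; everything else is bookkeeping, and the main difficulty is simply making sure the resulting constants come out no worse than $1+2L^2/r^2+16L^4/r_\gamma^4$ and $(2/r^2+8/r_\gamma^2+16L^2/r_\gamma^4)L$.
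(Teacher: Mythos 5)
Your proof follows the paper's own route for the setup and for the pointwise decomposition of the integrand, but diverges at the step controlling $|\sigma(s)-p(s)|$, and that divergence is where you should be careful. The paper's proof uses Lemma \ref{lem:estimhaus} (which is obtained by integrating the Lipschitz bound \eqref{eq:ravine1} for $\sigma'$ over a full period, and therefore carries a factor $L/r_\gamma$) to get
\[
\sup_{s\in[0,L]}\big|\sigma(s)-p(s)\big|\ \lesssim\ \tfrac{L}{r_\gamma}\big(\sqrt{LG}+|L-\ell|\big)+|L-\ell|,
\]
whereas your mean-value argument on $g_s(\tau)=(\Gamma(s)-\gamma(\tau))\cdot\gamma'(\tau)$ gives the genuinely pointwise and $L$-free bound $|\sigma(s)-p(s)|\leq C\,|\Gamma(s)-\gamma(p(s))|$ with $C$ absolute (one should quote Corollary \ref{cor:repar}, conclusion~4, to get $|\sigma(s)-p(s)|<r_\gamma/4$, hence $|\Gamma(s)-\gamma(\tau)|<3r_\gamma/8$ and $g_s'\leq -5/8$ on the segment, so $C=8/5$ works). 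That argument is correct and is, if anything, sharper than the paper's.

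The problem is in the last sentence, where you treat the constants as ``just bookkeeping.'' Because your bound drops the $L/r_\gamma$ factor, the coefficient of $\int_0^L|\Gamma'-\gamma'\circ p|^2$ coming from the angular error ends up of the form $C'L^2/r_\gamma^2$, not $16L^4/r_\gamma^4$. These two quantities are incomparable: $C'L^2/r_\gamma^2\leq 16L^4/r_\gamma^4$ only when $L\gtrsim r_\gamma$, and the lemma carries no such hypothesis (indeed $r_\gamma$ can be arbitrarily large relative to $L$ when the smooth curve is nearly straight). So your route proves a valid estimate with a different constant, but it does not prove the lemma's stated inequality, and no amount of careful bookkeeping will make the constants match. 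To reproduce the stated bound you would need to fall back on Lemma \ref{lem:estimhaus} at this step, or restate the lemma with your (better, in the relevant regime) constant. Two minor remarks: (i) since $\gamma(\sigma(s))$ is the local orthogonal foot, the convexity of $z\mapsto|\Gamma(s)-\gamma(z)|$ established in Lemma \ref{lem:uniqprojloc} gives $d(s)\leq|\Gamma(s)-\gamma(p(s))|$ directly, so the $(1+C)$ factor is not needed; (ii) the displayed inequality $\int_0^L|\Gamma-\gamma\circ p|^2\leq L^2\int_0^L|\Gamma'-\gamma'\circ p|^2+\tfrac23L(L-\ell)^2$ appears to drop the factor $2$ from $(a+b)^2\leq 2a^2+2b^2$.
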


To complete this section, we now provide the proofs of Lemmas \ref{lem:uniqprojloc} to \ref{lem:retour} and
Corollaries \ref{cor:repar} and \ref{cor:reparestim}.   

\medskip 

\noindent{\bf Proof of Lemma \ref{lem:uniqprojloc}.} If $r_\gamma = +\infty$, then $\gamma$ is a straight line and the conclusion is straightforward. We assume next
 that $r_\gamma \in (0,+\infty).$  

\noindent{\bf Step 1: Existence.}
 If $(x_0-\gamma(s_0)) \cdot \gamma'(s_0) = 0,$ 
it suffices to choose $\xi_0=s_0.$ If not, assume that $(x_0-\gamma(s_0)) \cdot \gamma'(s_0) > 0,$
the other case being treated in a very similar manner. Define $h\,:\, \R \to [0,+\infty)$ 
by $h(z) :=
|\gamma(z)-x_0|.$ 
For $z\in \R$ verifying $h(z)\neq 0$, 
we compute
$$
h'(z) = \gamma'(z) \cdot \frac{\gamma(z)-x_0}{|\gamma(z)-x_0|}
$$
and
\begin{equation}\label{eq:hseconde}
h''(z) = \gamma''(z) \cdot \frac{\gamma(z)-x_0}{|\gamma(z)-x_0|} + \frac{1}{|\gamma(z)-x_0|} -
\frac{\big((\gamma(z)-x_0)\cdot\gamma'(z)\big)^2}{|\gamma(z)-x_0|^3}.
\end{equation}
By assumption, we have $h(s_0)>0$ and $h'(s_0)<0.$ If $h(s_1)=0$ for some $s_1\in
[s_0,s_0+r_\gamma/4),$ it suffices to choose $\xi_0:=s_1.$ Assume next that $h(z)\neq 0$ for all $z\in
[s_0,s_0+r_\gamma/4).$ If $h'(z)<-1/2$ for all $z\in [s_0,s_0+r_\gamma/4],$
then we would have $h(s_0+r_\gamma/4) < h(s_0)-r_\gamma/8 <0,$ which is impossible. Let therefore $s_2 := \min\{
z\geq s_0,\, h'(z)\geq -1/2\}<s_0+r_\gamma/4.$ We have $h(s_2) \leq h(s_0)-\frac{1}{2}(s_2-s_0)$,
and $h'(s_2)=-\frac{1}{2}$ if $s_2\neq s_0$ or $-\frac{1}{2}\leq h'(s_2) <0$ if $s_2=s_0.$ We
claim that there exits $s_3\in[s_2,s_0+r_\gamma/4)$ such that $h'(s_3)=0,$ in which case it suffices to
choose  $\xi_0:=s_3.$ Assume by contradiction that 
$h'(z)\neq 0$ for any $z\in [s_2,s_0+r_\gamma/4),$ so that $h'(z)< 0$ for all $z\in
[s_2,s_0+r_\gamma/4),$ and in particular $h(z) \leq h(s_2) <r_\gamma/8,$
for all $z\in [s_2,s_0+r_\gamma/4).$ For $z \in [s_2,s_0+r_\gamma/4)$ such that $h'(z)\geq
-\frac{1}{2}$, since we also have by assumption $h'(z)<0$, 
we infer from \eqref{eq:hseconde} and the definition of $r_\gamma$ that
\begin{equation}\label{eq:chocolat1}
h''(z)\geq  - \frac{1}{r_\gamma} + \frac{1}{h(z)} - \frac{1}{4h(z)} > \frac{1}{2h(z)} \geq
\frac{1}{2h(s_2)}>0.
\end{equation}
Since $h'(s_2)\geq -1/2,$ \eqref{eq:chocolat1} first implies that in fact $h'(z)\geq -\frac{1}{2}$ for 
all $z\in [s_2,s_0+r_\gamma/4),$ 
and then that
$$
h'(s_0+r_\gamma/4) > h'(s_2) + \frac{1}{2h(s_2)}(s_0+r_\gamma/4-s_2) \geq -\frac{1}{2} + 
\frac{s_0+r_\gamma/4-s_2}{2\big(h(s_0)-(s_2-s_0)/2\big)} > 0, 
$$
since $h(s_0)<r_\gamma/8.$ This is not compatible with the fact that $h'(z)<0$ for all 
$z\in [s_2,s_0+r_\gamma/4),$ and we therefore have 
reached our contradiction. 

\smallskip
\noindent{\bf Step 2: Uniqueness.} Let $s \in (s_0-r_\gamma/4,s_0+r_\gamma/4)$ be such
that 
\begin{equation}\label{eq:gh1}
\big( x_0-\gamma(s) \big) \cdot \gamma'(s) = 0.
\end{equation}
On the one hand, if $h(s)\neq 0,$ then $h'(s)=0$, and therefore from \eqref{eq:hseconde} 
$$
h''(s) = \gamma''(s) \cdot \frac{\gamma(z)-x}{|\gamma(s)-x|} + \frac{1}{|\gamma(s)-x|} \geq -\frac{1}{r_\gamma} +
\frac{2}{r_\gamma } > 0.
$$
On the other hand, if $h(s)=0$ then $\lim_{z\to s,z>s}h'(z)=1.$ 
In both cases, we infer that $h'(z)>0$ for all $z>s$ in a sufficiently small neighborhood of $s.$ 
 Let $s^+ := \sup\{u >s,\, h'(z)>0 \text{ for } z\in(s,u)\}>s.$ If $s^+\neq +\infty$, then we must have
$h'(s^+)=0$ 
and $h''(s^+)\leq 0.$ Therefore
$$
0 \geq h''(s^+) =  \gamma''(s^+) \cdot \frac{\gamma(s^+)-x_0}{|\gamma(s^+)-x_0|} + \frac{1}{|\gamma(s^+)-x_0|} \geq -\frac{1}{r_\gamma} +
\frac{1}{|\gamma(s^+)-x_0|},
$$  
so that $ |\gamma(s^+)-x_0|\geq r_\gamma$. Since $s \in (s_0-r_\gamma/4,s_0+r_\gamma/4)$, since $\gamma$ is 
parametrized by arc-length, and since $|\gamma(s_0)-x_0|<r_\gamma/8$, we first infer that 
$|\gamma(s)-x_0)|<3r_\gamma/8$, and next that $s^+\geq s + 5r_\gamma/8.$ Hence $h(z)>h(s)$ for any $z\in
(s,s+5r_\gamma/8).$ A parallel argument leads to the conclusion that $h(z)>h(s)$ for any $z\in
(s-5r_\gamma/8,s).$ If $\tilde s \in (s_0-r_\gamma/4,s_0+r_\gamma/4)$ were such that $\tilde s \neq
s$ and $\tilde s$ is a solution to \eqref{eq:gh1} with $s$ replaced by $\tilde s$,
then we would infer, since then $|\tilde s-s|< r_\gamma/2<5r_\gamma/8$, that
$h(\tilde s)>h(s)$ and $h(s)>h(\tilde s),$ which is not possible.

\smallskip
\noindent{\bf Step 3: Regularity and Periodicity.}
In Step 1 and Step 2, we have constructed a function $\xi$ that satisfies and is uniquely defined by $\it 1$ 
and $\it 2$ for any $s\in \R.$ Condition $\it 3$ also follows from the quasiperiodicity assumption on $\gamma$ 
and the uniqueness of Step 2. Concerning regularity, let $(s_0,x_0) \in \Xi_\gamma:$ The function
$$
\psi\,:\, \Xi_\gamma \times \R \to \R,\ (s,x,z) \mapsto \big(x - \gamma(z)\big)\cdot \gamma'(z)
$$
vanishes at $(s,x,z)=(s_0,x_0,\xi(s_0,x_0))$ and we have 
$$
\partial_{z} \psi(s_0,x_0,\xi(s_0,x_0)) = -|\gamma'(\xi(s_0,x_0))|^2 + \big(x_0 - \gamma(\xi(s_0,x_0))\big)\cdot
\gamma''(\xi(s_0,x_0)) \leq -1 + \frac{r_\gamma}{8}\,\frac{1}{r_\gamma} < 0.
$$  
It follows from the implicit function theorem that there exists a continuously differentiable function $\tilde\xi,$ defined in
a neighborhood of $(s_0,x_0)$ in $\Xi_\gamma$ such that $\psi(s,x,\tilde\xi(s,x))\equiv 0.$ From Step 2, we 
infer that $\tilde\xi = \xi.$ The implicit function theorem also yields, differentiating the relation
$$
\big(x - \gamma(\xi(s,x))\big)\cdot \gamma'(\xi(s,x)) = 0
$$
with respect to $s$ and $x$ respectively, the formulas $\it 4$ and $\it 5.$ \qed

\noindent{\bf Proof of  Corollaries \ref{cor:repar} and \ref{cor:reparestim}.}

Corollary \ref{cor:repar} is straightforward. For the regularity stated in Corollary \ref{cor:reparestim},
notice that if $\sigma$ is any reparametrization of $\gamma$ for $\Gamma$ then for all $s\in \R$ we 
have $\sigma(s)= \xi(\sigma(s),\Gamma(s)).$ Since the function $\xi$ is locally constant in $s$ and
continuously differentiable, and since $\Gamma$ is Lipschitzian, it follows that $\sigma$ is Lipschitzian. 
Formula \eqref{eq:ravine0} is then a direct consequence of $\it 5$ in Lemma \ref{lem:uniqprojloc} and the
chain rule, and inequality \eqref{eq:ravine1} follows from \eqref{eq:ravine0}, $\it 2$ in Corollary \ref{cor:repar}
and the definition of $r_\gamma.$ \qed

\medskip

\noindent{\bf Proof of Lemma \ref{lem:estimhaus}.}
From \eqref{eq:ravine1}, we deduce in particular that for $s\in [0,L],$
\begin{equation}\label{eq:wagner1}
\sigma(s) \leq \sigma(0) + s + \frac{2}{r_\gamma} \|\Gamma - \gamma\circ\sigma\|_\infty s,
\end{equation}
and 
\begin{equation}\label{eq:wagner2}
\sigma(L) \leq \sigma(s) + (L-s) + \frac{2}{r_\gamma} \|\Gamma - \gamma\circ\sigma\|_\infty(L- s).
\end{equation}
By assumption on $\sigma$, we have $\sigma(L) = \sigma(0)+\ell,$
and therefore we deduce from \eqref{eq:wagner2} that
\begin{equation}\label{eq:wagner3}
\sigma(s) \geq \sigma(0) + s + (\ell-L) - \frac{2}{r_\gamma} \|\Gamma - \gamma\circ\sigma\|_\infty(L- s).
\end{equation}
Combining \eqref{eq:wagner1} and \eqref{eq:wagner3} the conclusion follows. 
\qed
\medskip

\noindent{\bf Proof of Lemma \ref{lem:estimhausbis}.}
For almost every  $s \in \R$ such that $\Gamma'(s) \cdot \gamma'(\sigma(s)) \geq 0,$ we have
\begin{equation*}
\begin{split}
\mathcal{F}(\Gamma,\gamma,\sigma,r,s) &\geq 1 - \Gamma'(s)\cdot \gamma'(\sigma(s)) \\
&=
\frac{|\Gamma'(s)|^2}{2}+\frac{|\gamma'(\sigma(s))|^2}{2} - \Gamma'(s)\cdot
\gamma'(\sigma(s))\\
&= \frac{1}{2}|\Gamma'(s)-\gamma'(\sigma(s))|^2, 
\end{split}
\end{equation*}
while for almost every $s \in \R$ such that $\Gamma'(s) \cdot \gamma'(\sigma(s)) < 0,$ we have
\begin{equation*}
\mathcal{F}(\Gamma,\gamma,\sigma,r,s) \geq 1 \geq  \frac{1}{2}|\Gamma'(s)-\gamma'(\sigma(s))|^2.
\end{equation*}
It follows in particular from the Pythagorean theorem that for almost every $s\in \R$,
\begin{equation}\label{eq:mozart}
\mathcal{F}(\Gamma,\gamma,\sigma,r,s) \geq \frac{1}{2}|\Gamma'(s)-\gamma'(\sigma(s))|^2\geq  \frac{1}{2} \big|
Q_{s}(\Gamma'(s))\big|^2,
\end{equation}
where $Q_{s}$ denotes the orthogonal projection onto the plane
$[\gamma'(\sigma(s))]^\perp.$
For any $s\in \R$, we also have, since $r\geq \|\Gamma-\gamma\circ\sigma\|_\infty,$ 
\begin{equation}\label{eq:verdi}
\mathcal{F}(\Gamma,\gamma,\sigma,r,s) \geq 1 - f(|\Gamma(s)-\gamma(\sigma(s))|^2) \geq \frac{1}{r^2}
|\Gamma(s)-\gamma(\sigma(s))|^2.
\end{equation}
Combining \eqref{eq:mozart} and \eqref{eq:verdi}, we are led, for almost every $s\in \R,$ to
\begin{equation}\label{eq:nabuco}
\mathcal{F}(\Gamma,\gamma,\sigma,r,s) \geq \frac{1}{\sqrt{2}r}  \big|Q_{s}(\Gamma'(s))\big|\,
\big|\Gamma(s)-\gamma(\sigma(s))\big| \geq
\frac{1}{2\sqrt{2}r} \big| \partial_s |\Gamma(s)-\gamma(\sigma(s))|^2 \big|.
\end{equation}
Taking into account the $L$-periodicity of the map $s\mapsto \Gamma(s)-\gamma(\sigma(s))$, integration of
\eqref{eq:nabuco} yields    
\begin{equation}\label{eq:saintsaens}
{\rm osc} \Big( \big|\Gamma(\cdot)-\gamma\circ\sigma(\cdot)\big|^2 \Big) \leq \sqrt{2}r F_{\gamma,\sigma,r}(\Gamma).
\end{equation}
Integration of \eqref{eq:verdi} and the Tchebichev inequality yield
\begin{equation}\label{eq:ravel}
\inf_{s\in \R} \big|\Gamma(s)-\gamma(\sigma(s))\big|^2 \leq \frac{r^2}{L} F_{\gamma,\sigma,r}(\Gamma),
\end{equation}
and combining \eqref{eq:saintsaens} and \eqref{eq:ravel} we therefore obtain
\begin{equation*}\label{eq:strauss}
\sup_{s\in \R} \big|\Gamma(s)-\gamma(\sigma(s))\big|^2 \leq \left( \sqrt{2}r + \frac{r^2}{L}\right)
F_{\gamma,\sigma,r}(\Gamma),
\end{equation*}
which corresponds to the first claim.

For the second claim, notice that if $F_{\gamma,r}(\Gamma)=+\infty$ there is nothing to prove, while if
 $F_{\gamma,r}(\Gamma)<+\infty$ it suffices to use the first claim and then to minimize over all the possible
reparametrizations of $\gamma$ for $\Gamma.$      
\qed

\medskip 

\noindent{\bf Proof of Lemma \ref{lem:aller}.}
Integration of \eqref{eq:mozart} for $s\in [0,L]$ yields
$$
\int_0^L |\Gamma'(s)-\gamma'(\sigma(s))|^2 \, ds \leq 2 F_{\gamma,\sigma,r}(\Gamma),
$$
so that
\begin{equation}\label{eq:faure2}
\int_0^L |\Gamma'(s)-\gamma'(\sigma(0)+\tfrac{\ell}{L}s)|^2 \, ds \leq 4 F_{\gamma,\sigma,r}(\Gamma) +
2\int_0^L |\gamma'(\sigma(s))-\gamma'(\sigma(0)+\tfrac{\ell}{L}s)|^2 \, ds.
\end{equation}
By Lemma \ref{lem:estimhaus}, we deduce
$$
\sup_{s\in [0,L]} |\sigma(s) - (\sigma(0)+\tfrac{\ell}{L}s)| \leq
\tfrac{2L}{r_\gamma}\|\Gamma-\gamma\circ\sigma\|_\infty + 2 |L-\ell|,
$$ 
so that
\begin{equation}\label{eq:faure3}
\int_0^L |\gamma'(\sigma(s))-\gamma'(\sigma(0)+\tfrac{\ell}{L}s)|^2 \, ds\leq L \left(
\tfrac{2L}{r_\gamma}\|\Gamma-\gamma\circ\sigma\|_\infty + 2 |L-\ell| \right)^2 \|\gamma''\|_\infty^2. 
\end{equation}
By Lemma \ref{lem:estimhausbis} and replacing $\|\gamma''\|_\infty$ by $1/r_\gamma$, we
deduce from \eqref{eq:faure3}
\begin{equation}\label{eq:faure4}
\int_0^L |\gamma'(\sigma(s))-\gamma'(\sigma(0)+\tfrac{\ell}{L}s)|^2 \, ds\leq \tfrac{8L}{r_\gamma^2} \left(
\tfrac{L^2}{r_\gamma^2}\left( \sqrt{2}r + \tfrac{r^2}{L}\right)\, F_{\gamma,\sigma,r}(\Gamma) + |L-\ell|^2 \right). 
\end{equation}
Combining \eqref{eq:faure2} and \eqref{eq:faure4} the conclusion follows.\qed

\medskip


\noindent{\bf Proof of Lemma \ref{lem:retour}.} 
Set 
$$
G:= \int_0^L |\Gamma'(s) - \gamma'(\tfrac{\ell}{L}s)|^2\, ds.
$$
By Cauchy-Schwarz inequality and \eqref{eq:miles}, we have, for any $\tau \in [0,L]$,
\begin{equation*}\label{eq:tra1}\begin{split}
\left| \Gamma(\tau) - \gamma(\tfrac{\ell}{L}\tau)\right| &\leq  \int_0^\tau |\Gamma'(s)-
\gamma'(\tfrac{\ell}{L}s)|\, ds + |1-\tfrac{\ell}{L}|\int_0^\tau |\gamma'(\tfrac{\ell}{L}s)|\, ds\\
&\leq \sqrt{L G} + |L-\ell|<r_\gamma/8.
\end{split}\end{equation*}
Hence $\dP(\Gamma,\gamma)\leq\sqrt{L G} + |L-\ell|<r_\gamma/8.$  Moreover, the unique
reparametrization $\sigma$ of $\gamma$ for $\Gamma$ given by Corollary \ref{cor:repar} for 
the choice $p(s)=Ls/\ell$ satisfies $\sigma(0)=0,$ and we have 
\begin{equation}\label{eq:tra2}
\|\Gamma-\gamma\circ\sigma\|_\infty \leq \sqrt{L G} + |L-\ell|.
\end{equation} 
From Lemma \ref{lem:estimhaus} and \eqref{eq:tra2}, we obtain
\begin{equation}\label{eq:tra3}
\sup_{s\in [0,L]} |\sigma(s) - s| \leq \tfrac{2L}{r_\gamma}\,\left( \sqrt{L G} + |L-\ell|\right)
+ (L - \ell)_+. 
\end{equation}
We write, for almost every $s\in [0,L],$
\begin{equation}\label{eq:tra4}\begin{split}
\mathcal{F}(\Gamma,\gamma,\sigma,r,s) &= 1 - \Gamma'(s)\cdot\gamma'(\sigma(s)) +
(1-f(|\Gamma(s)-\gamma(\sigma(s))|^2))\Gamma'(s)\cdot\gamma'(\sigma(s))\\
& \leq \frac{1}{2}|\Gamma'(s)-\gamma'(\sigma(s))|^2 + \tfrac{1}{r^2}|\Gamma(s)-\gamma(\sigma(s))|^2\\
&\leq |\Gamma'(s)-\gamma'(\tfrac{\ell}{L}s)|^2 +
|\gamma'(\sigma(s))-\gamma'(\tfrac{\ell}{L}s)|^2  +
\tfrac{1}{r^2}\|\Gamma-\gamma\circ\sigma\|^2_\infty\\
&=: |\Gamma'(s)-\gamma'(\tfrac{\ell}{L}s)|^2 + R.
\end{split}\end{equation}
By \eqref{eq:tra2} and \eqref{eq:tra3}, we infer that
\begin{equation}\label{eq:tra5}\begin{split}
R\ &\leq (\sup_{\tau\in [0,L]}
\big|\sigma(\tau)-\tfrac{\ell}{L}\tau\big|)^2\|\gamma''\|_\infty^2  + \tfrac{1}{r^2}(\sqrt{L G} +
|L-\ell|)^2\\
&\leq \tfrac{1}{r_\gamma^2}\left(\tfrac{2L}{r_\gamma}\,\Big( \sqrt{L G} + |L-\ell|\Big)
+ 2|L - \ell|\right)^2  + \tfrac{1}{r^2}\left(\sqrt{L G} + |L-\ell|\right)^2\\
&\leq  \left(\tfrac{2}{r^2} + 16\tfrac{L^2}{r_\gamma^4}\right)LG +
\left( \tfrac{2}{r^2} + \tfrac{8}{r_\gamma^2} + 16\tfrac{L^2}{r_\gamma^4}\right) |L-\ell|^2.
\end{split}\end{equation}
Integrating \eqref{eq:tra4} for $s\in [0,L]$ and using \eqref{eq:tra5}, we are led to
\begin{equation*}\label{eq:think}
F_{\gamma,\sigma,r}(\Gamma) \leq \left(1 + 2\tfrac{L^2}{r^2} + 16\tfrac{L^4}{r_\gamma^4}\right)G 
+\left( \tfrac{2}{r^2} + \tfrac{8}{r_\gamma^2} + 16\tfrac{L^2}{r_\gamma^4}\right)L|L-\ell|^2,
\end{equation*}
and the proof is complete.\qed
\section{Integral Formula for parametrized binormal curvature flows}\label{sect:weakp}

The next proposition is the second main ingredient in our proof of Theorem \ref{thm:main}.
A very similar result\footnote{Without an $s$ dependence for the vector field $X$ but 
valid for binormal curvature flows of co-dimension $2$ in arbitrary dimension $N\geq 3$.} was obtained 
ten years ago by the first author in \cite{Je1}, where it was also used
to define a weak notion of binormal curvature flow for objects that are only rectifiable currents 
of co-dimension 2 (notice than \eqref{eq:weakbfp} only
involves first order derivatives of $\gamma$).  In \cite{JeSm2} we provide an existence theory for
a notion of binormal curvature flow related to the one introduced in \cite{Je1}, and also strongly based
on formula \eqref{eq:weakbfp}. 

\begin{prop}\label{prop:weakp}
Let $I\subset \R$ be some open interval and let $\Gamma \in L^\infty(I, H^\frac32_{\rm loc}(\R,\R^3))$ be a solution
of the binormal curvature flow equation on $I\times \R.$ Assume moreover that $\Gamma$ is 
$L$-quasiperiodic in the sense that  
\begin{equation}\label{eq:perio0}
\Gamma(t,s+L) = \Gamma(t,s) + a,\qquad\forall t\in I,\, \forall\, s\in \R,   
\end{equation}
for some period $L>0$ and pitch $a\in \R^3.$ 

Then for any parametrized vector field $X \in \mathcal{C}^1(I\times \R, \mathcal{C}^{2}(\R^3,\R^3))$ 
such that
\begin{align}
\label{eq:ced1}
&X(t,s+L,x+a) = X(t,s,x)  
&\ &\forall t\in I,\, s\in \R,\text{ and }x \in \R^3, \\
\label{eq:ced2}
&\partial_s X(t,s,\Gamma(t,s))= \partial_{si}X(t,s,\Gamma(t,s))= 0 
&\ & \forall i\in\{1,2,3\},\, t\in I, \text{ and }s\in \R,
\end{align}
we have
\begin{equation}
\label{eq:weakbfp}
\frac{d}{dt} \int_0^L \left(X\circ \Gamma\right) \cdot \partial_s\Gamma \, ds  
= \int_0^L \left(\partial_t  X \circ \Gamma\right) \cdot \partial_s\Gamma - \left(D\, {\rm curl} X \circ
\Gamma\right)  : \left( \partial_s\Gamma \otimes
\partial_s\Gamma\right)
\, ds
\end{equation} 
in the sense of distributions on $I.$
\end{prop}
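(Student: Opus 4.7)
The proof is a formal chain-rule calculation together with a mollification argument to accommodate the low ($H^{3/2}$) regularity of $\Gamma$ in $s$. Note first that both sides of \eqref{eq:weakbfp} are well-defined from $\Gamma$ and $\partial_s\Gamma$ alone: the LHS is the distributional time derivative of a function in $L^\infty(I)$ (by quasiperiodicity and $\partial_s\Gamma\in L^\infty(I, L^2([0,L]))$, which follows from $\Gamma\in L^\infty(I, H^{3/2}_{\rm loc})$); the RHS is in $L^\infty(I)$ for the same reason, using the smoothness of $X$.

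For the formal derivation, assume $\Gamma$ is smooth. Differentiating $\int_0^L X\circ\Gamma\cdot\partial_s\Gamma\, ds$ in $t$ by the chain rule, integrating the resulting $\int X\cdot\partial_s\partial_t\Gamma\, ds$ by parts in $s$ (boundary terms vanish by the $L$-periodicity of $X\circ\Gamma$ and of $\partial_t\Gamma$, which follow from \eqref{eq:ced1} and \eqref{eq:perio0}), and discarding $(\partial_s X)\circ\Gamma$ via the first condition in \eqref{eq:ced2}, produces
$$
\frac{d}{dt}\int_0^L X\circ\Gamma\cdot\partial_s\Gamma\, ds = \int_0^L \partial_t X\cdot\partial_s\Gamma\, ds + \int_0^L \bigl[D_x X - (D_x X)^T\bigr]\partial_t\Gamma\cdot\partial_s\Gamma\, ds,
$$
where $D_x X$ is the Jacobian $(D_x X)_{ij}=\partial_{x_j}X_i$. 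The identification of the antisymmetric part of $D_x X$ with $\mathrm{curl}\, X$ rewrites the second integrand as $-\mathrm{curl}\, X\cdot(\partial_s\Gamma\times\partial_t\Gamma)$; substituting the binormal curvature flow equation and using the arc-length identity $\partial_s\Gamma\times(\partial_s\Gamma\times\partial_{ss}\Gamma) = -\partial_{ss}\Gamma$ (BAC-CAB together with $|\partial_s\Gamma|=1$) turns this into $\mathrm{curl}\, X\cdot\partial_{ss}\Gamma$. A second integration by parts in $s$, now using the mixed-derivative part of \eqref{eq:ced2} to annihilate $\partial_s(\mathrm{curl}\, X)\circ\Gamma$, converts $\int \mathrm{curl}\, X\cdot\partial_{ss}\Gamma\, ds$ into $-\int D\mathrm{curl}\, X:(\partial_s\Gamma\otimes\partial_s\Gamma)\, ds$, which is \eqref{eq:weakbfp}.

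For the rigorous justification, the crucial point is that the intermediate expression $\mathrm{curl}\, X\cdot\partial_{ss}\Gamma$ is ill-defined when $\Gamma$ has merely $H^{3/2}$ regularity, while the endpoint identity \eqref{eq:weakbfp} only involves $\partial_s\Gamma$. Mollify in $s$ by setting $\Gamma_\epsilon := \Gamma *_s \rho_\epsilon$, which is smooth in $s$ and $L$-quasiperiodic with pitch $a$; then $\partial_s\Gamma_\epsilon\to\partial_s\Gamma$ in $L^\infty(I, L^2([0,L]))$, and both $|\partial_s\Gamma_\epsilon|^2 - 1$ and $(\partial_s X)(t, s, \Gamma_\epsilon(t,s))$ tend to zero uniformly on $I\times[0,L]$ as $\epsilon\to 0$. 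Running the formal computation on $\Gamma_\epsilon$, tested against $\phi\in\mathcal{D}(I)$, while carrying out the two $s$-integrations by parts in tandem so that every contribution is expressed via $\Gamma_\epsilon$, $\partial_s\Gamma_\epsilon$, and $(t,s,x)$-derivatives of $X$ only, produces a rigorous identity. Passing $\epsilon\to 0$ yields \eqref{eq:weakbfp} in $\mathcal{D}'(I)$, since the remainder terms coming from the failure of arc-length and of \eqref{eq:ced2} along $\Gamma_\epsilon$ are products of uniformly bounded factors with quantities that vanish either uniformly or in $L^2$.

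The main obstacle is precisely the bookkeeping in the last step: one must ensure that no remainder retains a $\partial_{ss}\Gamma_\epsilon$ factor after the simultaneous integrations by parts, since such a term would not be stable as $\epsilon\to 0$. Equivalently, all second-derivative content must be moved onto the smooth field $X$ before the limit is taken.
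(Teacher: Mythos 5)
Your Step~1 (the formal identity for smooth $\Gamma$) coincides in content with the paper's, just phrased with $D_xX-(D_xX)^T$ and the BAC--CAB identity rather than the Levi--Civita symbol; that part is fine. The gap is in the regularization argument. You mollify only in $s$, and you list two sources of remainder --- the failure of $|\partial_s\Gamma_\epsilon|=1$ and the failure of \eqref{eq:ced2} along $\Gamma_\epsilon$ --- claiming these are products of uniformly bounded factors with quantities that vanish uniformly or in $L^2$. But you omit the dominant source of error: $\Gamma_\epsilon$ no longer solves the binormal curvature flow equation. Your plan to ``carry out the two $s$-integrations by parts in tandem so that every contribution is expressed via $\Gamma_\epsilon$, $\partial_s\Gamma_\epsilon$, and derivatives of $X$ only'' cannot be carried out: after the chain rule and the first integration by parts one is left with $\int\!\mathrm{curl}X\circ\Gamma_\epsilon\cdot(\partial_t\Gamma_\epsilon\times\partial_s\Gamma_\epsilon)\,ds$, and converting $\partial_t\Gamma_\epsilon$ into a pure $\partial_s$-expression so that the second integration by parts eliminates the second derivative requires $\partial_t\Gamma_\epsilon=\partial_s\Gamma_\epsilon\times\partial_{ss}\Gamma_\epsilon$, which is false --- $\partial_t\Gamma_\epsilon=\rho_\epsilon\ast_s(\partial_s\Gamma\times\partial_{ss}\Gamma)$ and the mollification does not commute with the quadratic nonlinearity. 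The resulting commutator error is essentially $\int(\partial_iX^j\circ\Gamma_\epsilon)(\partial_t\Gamma_\epsilon\times\partial_s\Gamma_\epsilon-\partial_{ss}\Gamma_\epsilon)^k\,ds$, and its two halves pair a quantity that is only $L^\infty(I,H^{-1/2}_{\rm loc})$-bounded (and not $L^2$-bounded, since $\partial_{ss}\Gamma_\epsilon\sim\epsilon^{-1/2}$ in $L^2$) against $H^{1/2}$-type objects. Nothing in your listed convergences controls this borderline pairing.

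This is exactly where the paper's Step~2 spends its effort. It works in $Y:=H^{1/2}\cap L^\infty$ and its dual $Y^*$, establishes $\partial_t\Gamma,\ \partial_t\Gamma\times\partial_s\Gamma\in L^\infty(I,Y^*)$ so that $\partial_t\Gamma\times\partial_s\Gamma$ is a bona fide distribution, decomposes $\partial_t\Gamma_\epsilon^p\partial_s\Gamma_\epsilon^q-\partial_t\Gamma^p\partial_s\Gamma^q$ into two pieces as in \eqref{eq:vivakida0}, transfers the mollifier across the $L^2(I,Y^*)$--$L^2(I,Y)$ duality, and invokes Banach--Alaoglu to identify the weak-$*$ limits as zero via distributional convergence (equations \eqref{eq:sautemouton}--\eqref{eq:vivakida2}). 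None of this duality/weak-$*$ machinery, or a substitute for it, appears in your proposal, so the step ``passing $\epsilon\to0$ yields \eqref{eq:weakbfp}'' is unjustified. A secondary inaccuracy: $|\partial_s\Gamma_\epsilon|^2-1$ does not tend to $0$ uniformly on $I\times[0,L]$, since $\partial_s\Gamma\in H^{1/2}$ need not be continuous; the convergence is only in $L^p$ for $p<\infty$ (in contrast, $\Gamma_\epsilon\to\Gamma$ does converge in $C^1_{\rm loc}$ because $H^{3/2}\hookrightarrow C^1$, so the $\partial_sX\circ\Gamma_\epsilon\to 0$ claim is sound). Finally, the paper mollifies in space-time rather than only in $s$, which ensures $\Gamma_\epsilon$ is genuinely smooth so that the Step~1 computation for $\Gamma_\epsilon$ is beyond dispute; mollifying only in $s$ would require an additional justification that the inner integral is absolutely continuous in $t$ before you can differentiate under the integral sign, a point your argument also leaves implicit.
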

\noindent In \eqref{eq:weakbfp}, by abuse of notations we have used the expression $X\circ \Gamma$ to denote the function 
$(t,s)\mapsto X(t,s,\Gamma(t,s))$, and similarly for $\partial_s X \circ \Gamma$ and $D\,{\rm curl}X\circ \Gamma$,
where $D$ and ${\rm curl}$ apply to the $x$ variable only.

\begin{proof}
\noindent{\bf Step 1.} Assume that $\Gamma$ is a smooth solution. We then write in coordinates, and with the same
abuse of notations as the one mentioned here above, 
\begin{equation*}\begin{split}
\frac{d}{dt} &\int_0^L (X\circ \Gamma)\cdot \partial_s\Gamma\, ds 
 - \int_0^L \left(\partial_t  X \circ \Gamma\right) \cdot \partial_s\Gamma\, ds\\
&=  \int_0^L  \sum_{i,j = 1}^3 (\partial_iX^j \circ \Gamma) \partial_t \Gamma^i  \partial_s\Gamma^j\, ds
+\int_0^L \sum_{j= 1}^3 (X^j\circ \Gamma)\partial_{st}\Gamma^j\, ds\\ 
&= \int_0^L \sum_{i,j = 1}^3 (\partial_iX^j \circ \Gamma) \left( \partial_t \Gamma^i  \partial_s\Gamma^j -
\partial_s \Gamma^i \partial_t \Gamma^j\right)\, ds - \int_0^L (\partial_sX\circ \Gamma) \cdot
\partial_t\Gamma\,ds + \Big[(X\circ\Gamma)\cdot \partial_t\Gamma \Big]_0^L\\
&= \int_0^L \sum_{i,j = 1}^3 (\partial_iX^j \circ \Gamma) \left( \partial_t \Gamma^i  \partial_s\Gamma^j -
\partial_s \Gamma^i \partial_t \Gamma^j\right)\, ds, 
\end{split}
\end{equation*}
where we have used \eqref{eq:perio0}, \eqref{eq:ced1} and \eqref{eq:ced2} for the last identity.   
By definition of the vector product, 
$$
\left( \partial_t \Gamma^i  \partial_s\Gamma^j - \partial_t \Gamma^j \partial_s
\Gamma^i\right) = \sum_{k=1}^3 \ep_{ijk} \left( \partial_t \Gamma \times \partial_s \Gamma\right)^k
$$
where $\ep_{ijk}$ is the Levi-Civita symbol, therefore  
\begin{equation}\label{eq:ep1}
\frac{d}{dt} \int_0^L (X\circ \Gamma)\cdot \partial_s\Gamma\, ds 
 = \int_0^L \left(\partial_t  X \circ \Gamma\right) \cdot \partial_s\Gamma \, ds
+ \sum_{i,j,k=1}^3 \ep_{ijk}\int_0^L (\partial_iX^j \circ \Gamma)\left( \partial_t \Gamma \times
\partial_s \Gamma\right)^k\,ds.
\end{equation}
Next, we also write in coordinates
$$
\int_0^L \left( D\, {\rm curl} X \circ \Gamma\right) : \left(
\partial_s\Gamma \otimes \partial_s\Gamma\right) \, ds 
= 
\int_0^L \sum_{k,l=1}^3 (\partial_l({\rm curl} X)^k)\circ \Gamma) \partial_s\Gamma^l \partial_s\Gamma^k\, ds.
$$
By definition of the rotational and the chain rule, 
\begin{equation*}\begin{split}
\int_0^L \sum_{k,l=1}^3 (\partial_l({\rm curl} X)^k)\circ \Gamma) \partial_s\Gamma^l \partial_s\Gamma^k\, ds
&
= \sum_{i,j,k,l=1}^3 \ep_{ijk}\int_0^L  (\partial_{il}X^j\circ \Gamma)
\partial_s\Gamma^l \partial_s\Gamma^k\, ds\\
= \sum_{i,j,k=1}^3 \ep_{ijk}\int_0^L \Big( \partial_s\big(\partial_{i}X^j\circ
\Gamma\big)-\partial_{si}X^j\circ\Gamma\Big) \partial_s\Gamma^k\, ds
&
=\sum_{i,j,k=1}^3 \ep_{ijk}\int_0^L \partial_s(\partial_{i}X^j)\circ
\Gamma)\partial_s\Gamma^k\, ds
\end{split}
\end{equation*}
where we have used \eqref{eq:ced2} for the last identity.
Integration by parts and \eqref{eq:perio0},\eqref{eq:ced1} therefore yields
\begin{equation}\label{eq:ep2}
\int_0^L \left( D\,  {\rm curl} X\circ \Gamma \right): \left(
\partial_s\Gamma \otimes \partial_s\Gamma\right) \, ds = - \sum_{i,j,k=1}^3 \ep_{ijk}\int_0^L  
(\partial_{i}X^j\circ \Gamma) \partial_{ss}\Gamma^k\, ds.
\end{equation}  
Combining \eqref{eq:ep1} and \eqref{eq:ep2}, and noticing 
that $\partial_t \Gamma \times \partial_s \Gamma = \partial_{ss}\Gamma$ whenever $\Gamma$ solves the binormal
curvature flow equation, the conclusion follows.

\medskip
\noindent{\bf Step 2.} The general case. We consider a family of space-time radially-symmetric mollifiers $(\rho_\eps)_{\eps>0}$ and
define the smooth functions $\Gamma_\eps := \rho_\eps * \Gamma.$ Even though the $\Gamma_\eps$ are no longer
solutions of the binormal curvature flow equation, we may reproduce the computations of Step 1 for $\Gamma_\eps$ up
 to before the conclusion, so as to obtain
\begin{equation}\label{eq:preconclu}
\frac{d}{dt} \int_0^L\!\! (X\circ \Gamma_\eps)\cdot \partial_s\Gamma_\eps\, ds 
= \int_0^L \!\!\left(\partial_t  X \circ \Gamma_\eps\right) \cdot \partial_s\Gamma_\eps\, ds 
- \int_0^L \!\!\left( D\,  {\rm curl} X\circ \Gamma_\eps \right): \left(
\partial_s\Gamma_\eps \otimes \partial_s\Gamma_\eps\right) \, ds + R_\eps 
\end{equation}   
where
$$
R_\eps := \sum_{i,j,k=1}^3 \ep_{ijk}\int_0^L (\partial_iX^j \circ \Gamma_\eps)\left( \partial_t \Gamma_\eps \times
\partial_s \Gamma_\eps - \partial_{ss}\Gamma_\eps\right)^k\,ds.
$$
Using the (local in time) uniform convergence of $\Gamma_\eps$ to $\Gamma$, and the 
strong convergence in (e.g.) $\mathcal{C}_{\rm loc}(I,L^2_{\rm loc}(\R,\R^3))$ of $\partial_s\Gamma_\eps$ to $\partial_s\Gamma$, 
 we can pass to the limit in the left-hand side of \eqref{eq:preconclu} in the sense of
distributions on $I$, as well as in the first and second terms 
on the right-hand side of \eqref{eq:preconclu} (pointwise in that case), and get the equivalent terms where
$\Gamma_\eps$ is replaced by $\Gamma.$ To conclude, we need 
therefore to prove that $R_\eps$ converges to zero in the
sense of distributions on $I,$ as $\eps\to 0.$  First notice that since $\partial_{ss}\Gamma_\eps$ converges 
in (e.g.) $\mathcal{C}_{\rm loc}(I,H^{-1}_{\rm loc}(\R,\R^3))$ to $\partial_{ss}\Gamma$ and since
 $\partial_iX^j \circ \Gamma_\eps$ converges in (e.g.) $\mathcal{C}_{\rm loc}(I,H^{1}_{\rm loc}(\R,\R^3))$ to 
$\partial_iX^j \circ \Gamma$ (indeed $\partial_iX^j$ is Lipschitzian), we have
\begin{equation}\label{eq:secconv}
\sum_{i,j,k=1}^3 \ep_{ijk}\int_0^L  (\partial_iX^j \circ \Gamma_\eps)\partial_{ss}\Gamma_\eps^k\,ds
 \xrightarrow{\eps\to 0} \sum_{i,j,k=1}^3 \ep_{ijk}\int_0^L(\partial_iX^j \circ \Gamma_\eps)
\partial_{ss}\Gamma^k\,ds, 
\end{equation}
in the sense of distributions on $I.$ We will next show that in the sense of distributions on $I,$ 
\begin{equation}\label{eq:premconv}
\sum_{i,j,k=1}^3 \ep_{ijk}\int_0^L (\partial_iX^j \circ \Gamma_\eps) \left( \partial_t \Gamma_\eps \times
\partial_s \Gamma_\eps\right)^k\,ds \xrightarrow{\eps \to 0}  \sum_{i,j,k=1}^3 \ep_{ijk}\int_0^L(\partial_iX^j
\circ \Gamma_\eps)  \left(
\partial_t \Gamma \times
\partial_s \Gamma\right)^k\,ds ,
\end{equation}
after we first establish that the right hand side of \eqref{eq:premconv} is indeed a distribution on $I.$ 
Our argument is based on the fact that 
$$
\partial_s\Gamma\in L^\infty(I,H^{1/2}(T^1,\R^3)) \cap
L^\infty(I\times T^1) = L^\infty\big(I, H^{1/2}(T^1,\R^3) \cap L^\infty(T^1,\R^3)\big) =: L^\infty(I,Y),
$$
and on the algebra properties of this last space. Since $\partial_{ss}\Gamma \in
L^\infty(I,H^{-1/2}(T^1,\R^3)) \subseteq L^\infty(Y^*)$, we firstly infer that 
$$
\partial_t \Gamma = \partial_s\Gamma \times \partial_{ss}\Gamma \in L^\infty(I,Y^*),
$$ 
and then secondly that
$$
\partial_t \Gamma \times \partial_s \Gamma \in L^\infty(I,Y^*),
$$
the latter being therefore a well-defined distribution. 

Let now $m\in \mathcal{D}(I,\R)$ be arbitrary, and set
$$ 
\chi_\eps(t,s) := ( \partial_iX^j
\circ \Gamma_\eps)(t,s) m(t) \in \mathcal{C}(I,H^{3/2}(T^1,\R)).
$$
For $p,q \in \{1,2,3\},$ we write 
\begin{equation}\label{eq:vivakida0}
\partial_t \Gamma_\eps^p \partial_s \Gamma_\eps^q = \partial_t \Gamma^p \partial_s \Gamma^q
+ \partial_t(\Gamma_\eps^p -\Gamma^p)\partial_s \Gamma_\eps^q
 + \partial_t\Gamma^p \partial_s(\Gamma_\eps - \Gamma)^q.
\end{equation}
Denoting by $\langle\cdot,\cdot\rangle$ the $L^2(I,Y^*)-L^2(I,Y)$
duality, we have
\begin{equation}\label{eq:sautemouton}
\langle \partial_t(\Gamma_\eps^p -\Gamma^p ) \partial_s \Gamma_\eps^q , \chi_\eps\rangle 
= \langle\chi_\eps \partial_t(\Gamma_\eps^p -\Gamma^p ) , \partial_s \Gamma_\eps^q\rangle
= - \langle \rho_\eps * \big( \chi_\eps\partial_t(\Gamma_\eps^p -\Gamma^p )\big)  , \partial_s \Gamma^q\rangle.
\end{equation}
The family $\big(\rho_\eps * \big( \chi_\eps\partial_t(\Gamma_\eps^p -\Gamma^p )\big)\big)_{\eps}$ being bounded
 in the dual Banach space $L^2(I,Y^*),$ the Banach-Alaoglu theorem yields a subsequence that converges weakly-* 
to some limit in $L^2(I,Y^*).$ Since it also converges to $0$ in the sense of distributions on $I\times T^1$
(recall that $\chi_\eps$ is strongly convergent in $\mathcal{C}(I,H^{s}(T^1,\R))$ for any $s<3/2$), that
limit must be zero. Hence, $m\in \mathcal{D}(I,\R)$ being arbitrary, passing to the limit in the last term of
\eqref{eq:sautemouton} we deduce that
\begin{equation}\label{eq:vivakida1}
\int_0^L(\partial_iX^j \circ \Gamma_\eps) \partial_t(\Gamma_\eps^p -\Gamma^p)\partial_s \Gamma_\eps^q\, ds \to 0
\end{equation}
in the sense of distributions on $I.$ Similarly we write
\begin{equation*}\label{eq:sautemoutonbis}
\langle \partial_t\Gamma^p  \partial_s (\Gamma_\eps^q-\Gamma^q) , \chi_\eps\rangle = 
\langle \partial_t\Gamma^p  \chi_\eps, \partial_s (\Gamma_\eps^q-\Gamma^q)\rangle 
= -\langle ({\rm Id - \rho_\eps *})( \partial_t\Gamma^p  \chi_\eps), \partial_s\Gamma^q\rangle,
\end{equation*}
and we deduce as above that
\begin{equation}\label{eq:vivakida2}
\int_0^L(\partial_iX^j \circ \Gamma_\eps) \partial_t\Gamma^p \partial_s (\Gamma_\eps^q-\Gamma^q)\,ds \to 0
\end{equation}
in the sense of distributions on $I.$ Combining \eqref{eq:vivakida1} and \eqref{eq:vivakida2} for various $i$, $j$,
$p$ and $q$ in $\{1,2,3\}$ according to \eqref{eq:vivakida0}, we obtain \eqref{eq:premconv}. 
Combining \eqref{eq:premconv} and \eqref{eq:secconv} we are led to
$$
R_\eps \to \sum_{i,j,k=1}^3 \ep_{ijk}\int_0^L (\partial_iX^j \circ \Gamma)\left( \partial_t \Gamma \times
\partial_s \Gamma - \partial_{ss}\Gamma\right)^k\,ds = 0,
$$
and the proof is complete.
\end{proof}  
\section{The heart of the matter : A pointwise estimate}\label{sect:key}

In this section we present the key estimate that will allow us to control
the growth of the discrepancy between a smooth solution $\gamma$ and a less
smooth solution $\Gamma$  of the binormal curvature flow equation \eqref{eq:strongbfbis}.
 Nearly the same
estimate plays an important role in a somewhat different setting in \cite{JeSm2},
where we study a class of non-parametrized weak solutions of the binormal
curvature
flow. For the sake of completeness, and because it is reasonably short, we
present the proof both here and in \cite{JeSm2}, in full (and nearly identical)
detail.

\medskip

Let $(t_0,s_0)\in \R^2$ and $\gamma \,:\, \mathcal{O} \to \R^3$, where $\mathcal{O}$ is some open
neighborhood of $(t_0,s_0).$ Assume that $\partial_{ts}\gamma$ and $\partial_{sss}\gamma$ 
are continuous in $\mathcal{O},$ and that for any $(t,s) \in \mathcal{O},$
\begin{equation}\label{eq:paramarc}
|\partial_s\gamma(t,s)| = 1. 
\end{equation}
   
Let $x_0\in \R^3$ be such that
\begin{equation}\label{eq:proche1} 
\big| x_0 - \gamma(t_0,s_0) \big| \big|\partial_{ss}\gamma(t_0,s_0)\big| < 1/2 
\end{equation}
and
\begin{equation}\label{eq:ortho1}
\big( x_0 - \gamma(t_0,s_0)\big) \cdot \partial_s \gamma(t_0,s_0)=0.
\end{equation}

The mapping $\Psi : \R^3 \times \mathcal{O} \to \R$
$$
(x,t,s) \mapsto \big( x - \gamma(t,s)\big) \cdot \partial_s \gamma(t,s) 
$$
satisfies $\Psi(x_0,t_0,s_0)=0$ and
\begin{equation}\label{eq:ibm1}
\partial_s \Psi(x_0,t_0,s_0) = - |\partial_s\gamma(t_0,s_0)|^2 +\big( x_0 - \gamma(t_0,s_0)\big) \cdot
\partial_{ss}\gamma(t_0,s_0) < -1/2 , 
\end{equation}
where we have used \eqref{eq:paramarc} and \eqref{eq:proche1} for the last inequality. From the implicit function
theorem, we infer that there exist an open neighborhood $\mathcal{U}$ of $(x_0,t_0)$ in $\R^4,$ 
and a function $\zeta\,:\, \mathcal{U}\to \R$ such that
\begin{equation*}
\Psi(x,t,\zeta(x,t)) = 0 \qquad \forall (x,t) \in \mathcal{U}.
\end{equation*}   
Moreover $\partial_t\zeta$ and $\partial_{{x_i}{x_j}}\zeta$ exist and are continuous in $\mathcal{U}$, for
every $i,j\in \{1,2,3\}.$

\medskip

We fix $r>0$ and define the vector field $X\equiv X_{\gamma,r}$ on $\mathcal{U}$ by
\begin{equation}\label{eq:defX}
X(x,t) = f\Big(\big|x-\gamma(t,\zeta(x,t))\big|^2\Big) \partial_s\gamma(t,\zeta(x,t)),
\end{equation}
where
the function $f\equiv f_r\,:\, [0,+\infty) \to [0,+\infty)$ is given by 
$$
f(d^2) := \left\{ 
\begin{array}{ll}
\displaystyle 1- \big(\tfrac{d}{r}\big)^2, & \ \text{for } \,0\:\leq d^2 \leq r^2,\\
\displaystyle \ 0, & \ \text{for } d^2\geq r^2.
\end{array}\right.
$$

The following pointwise estimate is the core of our analysis. It allows to control
the terms which appear on the right-hand side of the identity in Proposition \ref{prop:weakp}.

\begin{prop}\label{prop:core}
Assume that
\begin{equation}\label{eq:labonne}
\big( \partial_t\gamma - \partial_s\gamma\times\partial_{ss}{\gamma}\big)(t_0,s_0) \ = \ \partial_s \big(
\partial_t\gamma - \partial_s\gamma\times\partial_{ss}{\gamma}\big)(t_0,s_0) = 0.
\end{equation}
Then for any $r> |x_0-\gamma(t_0,s_0)|$ and $V\in S^2\subset \R^3$ the estimate 
\begin{equation*}\label{eq:waou}
\bigl|\partial_t X \cdot V - D({\rm curl} X) : (V \otimes V)  \bigr|   \leq  K \left( 1 -
X \cdot V\right)
\end{equation*}
holds at the point $(x_0,t_0),$ where
$$
K \equiv K(\gamma,r) := \frac{8}{r^2} + \frac{32}{r_0^2} + \left(2+32\frac{r}{r_0}\right)\Sigma_0,
$$
$$
r_0 := 1/|\partial_{ss}\gamma(t_0,s_0)|\in (0,+\infty],\qquad \text{ and }\qquad \Sigma_{0} =
|\partial_{sss}\gamma(t_0,s_0)|\in [0,+\infty). 
$$
\end{prop}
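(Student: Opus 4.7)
\textbf{Plan for the proof of Proposition \ref{prop:core}.} The strategy is direct calculation at the distinguished point $(x_0,t_0)$, followed by a contraction estimate against $V \otimes V$ and a comparison with $1 - X \cdot V$. I would first fix the shorthand $\tau := \partial_s \gamma(t_0,s_0)$, $n := \partial_{ss}\gamma(t_0,s_0)$, $\nu := \partial_{sss}\gamma(t_0,s_0)$, $y_0 := x_0 - \gamma(t_0,s_0)$, and $\delta := 1 - y_0 \cdot n$. The arc-length assumption \eqref{eq:paramarc} gives $\tau \cdot n = 0$ and, after one further differentiation, $|n|^2 + \tau \cdot \nu = 0$; hypothesis \eqref{eq:ortho1} gives $y_0 \cdot \tau = 0$; and \eqref{eq:proche1} gives $\delta \geq 1/2$. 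From the implicit function theorem,
\[
\zeta(x_0,t_0) = s_0, \qquad D_x \zeta(x_0,t_0) = \tau/\delta,
\]
and, after invoking \eqref{eq:labonne} to substitute $\partial_t\gamma(t_0,s_0) = \tau\times n$ and $\partial_{st}\gamma(t_0,s_0) = \tau\times\nu$ (using $\tau\cdot(\tau\times n)=0$),
\[
\partial_t \zeta(x_0,t_0) = -\frac{y_0 \cdot (\tau \times \nu)}{\delta}.
\]

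Next I would compute $\partial_t X$ and $D(\mathrm{curl\,} X)$ at $(x_0,t_0)$ by applying the chain rule to \eqref{eq:defX} together with the product identity $\mathrm{curl}(\phi V) = \nabla\phi \times V + \phi\,\mathrm{curl\,} V$. Each resulting expression is an explicit polynomial in $y_0,\tau,n,\nu$ with denominators that are powers of $\delta \geq 1/2$. Decomposing $V = \alpha\tau + W$ with $W \perp \tau$ and $\alpha^2 + |W|^2 = 1$, the scalar $\partial_t X \cdot V - D(\mathrm{curl\,} X): V\otimes V$ splits into pieces of differing orders in $W$ and $y_0$. The zeroth- and first-order terms in $(W,y_0)$ must cancel, since otherwise the configuration $y_0 = 0$, $V = \tau$ would violate the estimate (the right-hand side being zero there); this cancellation is a reflection of \eqref{eq:labonne}, namely that $\gamma$ itself solves the binormal flow equation up to second order in $s$ at $(t_0,s_0)$.

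For the final comparison I would use
\[
1 - X(x_0,t_0)\cdot V \;=\; 1 - \alpha f(|y_0|^2).
\]
If $\alpha \leq 0$ then $1 - X\cdot V \geq 1$ and it suffices to bound $|\partial_t X \cdot V - D(\mathrm{curl\,} X): V\otimes V|$ uniformly, which follows from the explicit formulas and $|y_0| < r$. If $\alpha \in (0,1]$, the two lower bounds
\[
1 - X\cdot V \;\geq\; 1-\alpha \;\geq\; \tfrac12 |W|^2 \quad\text{and}\quad 1 - X\cdot V \;\geq\; \alpha\,|y_0|^2/r^2
\]
dominate respectively the quadratic remainders in $W$ (whose coefficients involve $1/r_0^2$ and $\Sigma_0$) and those in $y_0$ (whose coefficients involve $1/r^2$ and $r\Sigma_0/r_0$). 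Summing the contributions yields the stated constant $K = 8/r^2 + 32/r_0^2 + (2 + 32\,r/r_0)\Sigma_0$.

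The principal obstacle is bookkeeping: computing $D(\mathrm{curl\,} X)$ requires a second spatial derivative of $x \mapsto \gamma(t_0, \zeta(x,t_0))$, hence a fairly unwieldy expression for $D^2_x \zeta(x_0,t_0)$. Verifying the cancellation of the lowest-order terms and then packaging the surviving quadratic remainders into the three prefactors of $K$ is where essentially all the labor lies.
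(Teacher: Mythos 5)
Your plan follows essentially the same route as the paper's proof: explicit computation of $\partial_t X$ and $D(\mathrm{curl}\,X):(V\otimes V)$ at $(x_0,t_0)$ via the implicit function theorem for $\zeta$, identification of the low-order cancellations that stem from \eqref{eq:labonne}, and then a comparison of the surviving quadratic remainders against the three lower bounds for $1-X\cdot V$ (in $d^2$, in $1-\partial_s\gamma(\zeta)\cdot V$, and in $|V^\perp|^2$). The paper does not introduce your $V=\alpha\tau+W$ decomposition explicitly, but works directly with $V^\perp$; the content is the same.

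Two small corrections. First, the sign of $\partial_t\zeta$ is wrong: from $\psi(x,t,z)=(x-\gamma(t,z))\cdot\partial_s\gamma(t,z)=0$ one gets $\partial_t\zeta = -\partial_t\psi/\partial_z\psi$ with $\partial_z\psi=-\rho$, and after invoking \eqref{eq:labonne} this yields $\partial_t\zeta(x_0,t_0) = +\,y_0\cdot(\tau\times\nu)/\delta$, not $-\,y_0\cdot(\tau\times\nu)/\delta$. Second, your lower bound $1-X\cdot V\geq \alpha\,|y_0|^2/r^2$ carries an $\alpha$ that degrades to nothing as $\alpha\to0^+$; the paper uses the cleaner bound $1-X\cdot V\geq 1-f(d^2)\geq d^2/r^2$, valid because $X\cdot V=f(d^2)\,\tau\cdot V\leq f(d^2)$ for all $V\in S^2$. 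You would need this sharper form (or a case analysis interpolating with the $|W|^2$ bound) to control the mixed remainders of the form $d\,|V^\perp|$ that arise from terms such as $A_{2,1}$ and $B_1-A_{2,2}$, and to recover the stated constant $K$. Beyond that, as you acknowledge, the plan leaves the bookkeeping undone; carrying it out (the paper's terms $A_1,\dots,A_4$, $B_1,B_2,B_3$ and the pairings $A_{2,2}\leftrightarrow B_1$, $A_3\leftrightarrow B_3$) is where the proof actually lives, and it is there that the sign of $\partial_t\zeta$ would have been caught.
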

\noindent(In case $r_0=+\infty$, one can take $r=+\infty$ and then $K$ should be understood as $K =  2\Sigma_0$)

\begin{proof}
We start by fixing some notations that will help keeping expressions of reasonable size in the sequel. 
 For a function $Y$ with values in $\R^3,$ and $i\in\{1,2,3\},$ we write $Y^i$ to denote the i-th
component of $Y.$ We write  $d^2$ to denote the function 
$(x,t)\mapsto |x-\gamma(t,\zeta(x,t))|^2$, $\gamma(\zeta)$ to denote the function $(x,t)\mapsto \gamma(t,\zeta(x,t))$, and similarly for
$\partial_t\gamma(\zeta),$  $\partial_{ts}\gamma(\zeta),$ $\partial_s\gamma(\zeta),$ $\partial_{ss}\gamma(\zeta)$ and
$\partial_{sss}\gamma(\zeta).$ When this does not lead to a possible confusion, we also denote by $x$ the function
$(x,t)\mapsto x.$  Each of these functions are being defined on $\mathcal{U}.$

\medskip

In view of \eqref{eq:ibm1}, shrinking $\mathcal{U}$ if necessary,
we may assume that\footnote{In particular we won't need to deal with the lack of regularity of $f$ at $d=r$.}  
\begin{equation}\label{eq:tp2}
d^2(x,t) < r,\qquad \forall (x,t) \in \mathcal{U},
\end{equation}
and 
\begin{equation}\label{eq:ibm2}
\rho(x,t):= 1 - \big( x - \gamma(t,\zeta(x,t))\big) \cdot
\partial_{ss}\gamma(t,\zeta(x,t)) > 1/2,\qquad \forall (x,t) \in \mathcal{U}. 
\end{equation} 

{\bf Step 1: First computation of \mb$D({\rm curl} X) : (V \otimes V).$\mn}    
Differentiating \eqref{eq:defX} we obtain, pointwise on $\mathcal{U}$ and for $i,j \in\{1,2,3\},$  
\begin{equation*}\label{eq:djXgamma}
\partial_{j}X^i = \partial_{j}(f(d^2) )\partial_s \gamma(\zeta)^i
+ f(d^2) \, \partial_{ss}\gamma(\zeta)^i \, \partial_{j}\zeta
\end{equation*}
for the space derivatives, and 
\begin{equation}
\label{eq:dtXgamma}
\partial_{t}X^i = \partial_{t}(f(d^2) )\, \partial_s \gamma(\zeta)^i
+ f(d^2) \, \big[ \partial_{ss}\gamma(\zeta)^i \, \partial_{t}\zeta + 
\partial_{ts}\gamma(\zeta)\big] 
\end{equation}
for the time derivative. Also, for $i,j,\ell \in\{1,2,3\},$ 
\begin{equation}\begin{split}
\partial_{\ell j} X^i =  & 
\partial_{\ell j} (f(d^2 ))\, \partial_s \gamma(\zeta)^i
+\partial_{ss}\gamma(\zeta)^i \big[ \partial_{\ell} (f(d^2)) \, \partial_{j}\zeta + \partial_{j} (f(d^2)) \,
\partial_{\ell}\zeta\big]\nonumber
\\
&
+ f(d^2) \, \partial_{sss}\gamma(\zeta)^i \,\partial_{\ell}\zeta \,\partial_{j}\zeta
+ f(d^2) \, \partial_{ss}\gamma(\zeta)^i \,\partial_{\ell j} \zeta.
\label{eq:dldjXgamma}
\end{split}\end{equation}
In particular, we may write
\begin{equation*}\label{eq:Dcurl1}
D({\rm curl} X) : (V \otimes V) 
\ =: \  A 
 \ = \ A_1 + A_2 + A_3 + A_4, 
\end{equation*}
where
\begin{equation*}\label{eq:ugly1}
\begin{aligned}
A_1& :=  \ep_{ijk} \partial_{\ell i} (f(d^2 ))\, \partial_s \gamma(\zeta)^j \,V^k V^\ell \,  ,\\
A_2&:= 
\ep_{ijk} 
\partial_{ss}\gamma(\zeta)^j \big[ \partial_{\ell} (f(d^2)) \, \partial_{i}\zeta  + \partial_{i} (f(d^2)) \
\partial_{\ell}\zeta  \big]V^k V^\ell \, ,
\\
A_3&
:= \ep_{ijk} \, f(d^2)\, \partial_{sss}\gamma(\zeta)^j \,\partial_{\ell} \zeta  \, \partial_{i}\zeta V^k V^\ell \, , \\
A_4&
:=  \ep_{ijk} f(d^2) \ \partial_{ss}\gamma(\zeta)^j \,\partial_{\ell i}\zeta
V^kV^\ell,
\end{aligned}
\end{equation*}
in which $\ep_{ijk}$ is the Levi-Civita symbol and we sum over repeated indices. 

\medskip

{\bf \mb Step 2: Expressing derivatives of $\zeta$ in terms of $\gamma$.\mn} 
Recall that by definition of $\zeta$, we have
\begin{equation}\label{eq:sigma1}
(x - \gamma(t,\zeta(t,x)) )\cdot \partial_s \gamma(t,\zeta(t,x)) = 0
\end{equation}
for every $(x,t)\in \mathcal{U}.$ For $j\in \{1,2,3\},$ differentiating \eqref{eq:sigma1} 
with respect to $x_j$  and using \eqref{eq:paramarc} we find 
\begin{equation}\label{eq:sigma1.5}
\partial_s \gamma(\zeta)^j- \partial_{j}\zeta + (x-\gamma(\zeta))\cdot
\partial_{ss}\gamma(\zeta) \ \partial_{j}\zeta = 0.
\end{equation}
In view of \eqref{eq:ibm2}, we may rewrite \eqref{eq:sigma1.5} as
\begin{equation}\label{eq:sigma2}
\partial_{j}\zeta  = \frac 1 {\rho} \partial_s \gamma(\zeta)^j.
\end{equation}
For $\ell\in \{1,2,3\},$ differentiating \eqref{eq:sigma1.5} with respect to $x_\ell$ and using 
\eqref{eq:paramarc}, we obtain
\begin{equation}\label{eq:sigma3}
\partial_{\ell j}\zeta
= \frac 1 \rho \Big(
\partial_{ss}\gamma(\zeta)^j \frac {\partial_s \gamma(\zeta)^\ell}\rho 
+ 
\partial_{ss} \gamma(\zeta)^\ell \frac {\partial_s \gamma(\zeta)^j}\rho 
+
(x-\gamma(\zeta))\cdot \partial_{sss}\gamma(\zeta) \frac
{\partial_s\gamma(\zeta)^j \partial_s\gamma(\zeta)^\ell}{\rho^2}
\Big).
\end{equation}
Finally, differentiating \eqref{eq:sigma1} with respect to $t$ we obtain
\begin{equation}
\partial_t\zeta \ = \ 
\frac 1 \rho( - \partial_t\gamma(\zeta) \cdot \partial_s \gamma(\zeta) + (x -
\gamma(\zeta))\cdot\partial_{ts} \gamma(\zeta)).
\label{eq:sigma4}\end{equation}
In particular, taking into account \eqref{eq:labonne} it follows from \eqref{eq:sigma4} that, {\it at the point} $(x_0,t_0)$,
\begin{equation*}\label{eq:sigma4point}
\partial_t\zeta \ = \ 
\frac {1}{\rho} (x - \gamma(\zeta))\cdot\ \big(\partial_s \gamma(\zeta)\times
\partial_{sss} \gamma(\zeta)\big).
\end{equation*}

\medskip

{\bf \mb Step 3: Expressing derivatives of $d^2$ in terms of $\gamma.$\mn} 
In view of the definition of $d^2$, we have for $j\in \{1,2,3\},$  
\begin{equation}\label{eq:Ddsquared}
\partial_{j} d^2  = 2(x - \gamma(\zeta))^j - 2(x - \gamma(\zeta))\cdot
\partial_s\gamma(\zeta)\ 
\partial_{j}\zeta \  = \  2(x - \gamma(\zeta))^j, 
\end{equation}
where the last equality follows from \eqref{eq:sigma1}. 
For $\ell\in \{1,2,3\},$ differentiating \eqref{eq:Ddsquared} with respect to $x_\ell$ and using \eqref{eq:sigma1.5}, 
we obtain  
\begin{equation}\label{eq:D2dsquared}
\partial_{\ell j} d^2 = -2\big( \delta_{j\ell } - \partial_s \gamma(\zeta)^j\partial_{\ell}\zeta\big) =
-2\Big(\delta_{j\ell} - \frac {\partial_s \gamma(\zeta)^j\, \partial_s \gamma(\zeta)^\ell}\rho\Big),
\end{equation}
where $\delta_{j\ell}$ is the Kronecker symbol. Also from the definition of $d^2$, we have 
\begin{equation}\label{eq:dtdsquared}
\partial_{t} d^2 =  - 2(x - \gamma(\zeta))\cdot (\partial_t\gamma(\zeta) + \partial_s \gamma(\zeta) \,  \partial_t\zeta).
\end{equation}
In particular, taking into account \eqref{eq:sigma1} and \eqref{eq:labonne} it follows from \eqref{eq:dtdsquared} that, {\it at the point} $(x_0,t_0)$,
\begin{equation}\label{eq:dtdsquaredpoint}
 \partial_{t} d^2 =  - 2(x - \gamma(\zeta))\cdot (\partial_s\gamma(\zeta) \times \partial_{ss}
   \gamma(\zeta)).
\end{equation}

\medskip

{\bf \mb Step 4: A reduced expression for \mb$D({\rm curl} X) : (V \otimes V).$\mn} We substitute,
in the terms $A_1, A_2, A_3$ and $A_4$ defined in Step 1, the expressions for the derivatives of 
$d^2$ and $\zeta$ which we obtained in Step 2 and Step 3. Some cancellations occur.

\smallskip

Examining $A_1$, we first expand:
\begin{equation*}\begin{split}
\partial_{\ell i} (f(d^2 )) &=   f''(d^2) \partial_{\ell}d^2 \,
\partial_{i}d^2 + f'(d^2) \partial_{\ell i} d^2\\
&=  \tfrac{2}{\rho}f'(d^2)\partial_s \gamma(\zeta)^\ell\,
\partial_s \gamma(\zeta)^i - 2f'(d^2)\delta_{\ell i},
\end{split}
\end{equation*}
where we have used $f''(d^2)\equiv 0$ by \eqref{eq:tp2} as well as \eqref{eq:D2dsquared} for the second equality. 
Next, 
\begin{align*}
\ep_{ijk} \partial_s \gamma(\zeta)^\ell\,
\partial_s \gamma(\zeta)^i\, \partial_s \gamma(\zeta)^j\, V^k V^\ell &= \big(\partial_s \gamma(\zeta)\cdot
(\partial_s\gamma(\zeta)\times V) \big)\big( \partial_s\gamma(\zeta) \cdot V\big) = 0,\\
\shortintertext{and}
\ep_{ijk} \delta_{\ell i} \, \partial_s \gamma(\zeta)^j\, V^k V^\ell &=  \ep_{ijk} V^i \, \partial_s
\gamma(\zeta)^j\, V^k = V\cdot \big( \partial_s\gamma(\zeta)\times V\big) = 0.
\end{align*}
Hence,
\begin{equation*}\label{eq:A1}
A_1 =  0.
\end{equation*}
Concerning $A_2$, \eqref{eq:sigma2} and \eqref{eq:Ddsquared} yield 
\begin{equation*}\label{eq:A2}\begin{split}
A_2 = &\ \tfrac 2 \rho f'(d^2) \ep_{ijk} V^k V^\ell 
\partial_{ss}\gamma(\zeta)^j  \big( (x-\gamma(\zeta))^\ell \partial_s
\gamma(\zeta)^i  +(x-\gamma(\zeta))^i   \partial_s \gamma(\zeta)^\ell  \big)\\
= & \   
\tfrac 2 \rho f'(d^2)  \partial_{ss}\gamma(\zeta)\cdot  (\partial_s
\gamma(\zeta) \times V)  (V \cdot (x-\gamma(\zeta))) \ +
 \\
& \ \tfrac 2 \rho f'(d^2) (x-\gamma(\zeta))\cdot(\partial_{ss}\gamma(\zeta) \times V) (V \cdot
\partial_s\gamma(\zeta))
\\
 =&\!: \ A_{2,1} + A_{2,2}.
\end{split}\end{equation*}
For $A_3,$ we invoke \eqref{eq:sigma2} to substitute $\partial_\ell \zeta$ and $\partial_i\zeta$ and obtain 
\begin{equation*}\label{eq:A3}
A_3   =   \frac 1{\rho^2}f(d^2) (\partial_s\gamma(\zeta)\cdot V) \
\partial_s\gamma(\zeta)
\cdot(\partial_{sss}\gamma(\zeta)\times V).
\end{equation*}
For $A_4$ finally, we invoke \eqref{eq:sigma3} to substitute $\partial_{\ell i}\zeta$ and obtain
\begin{equation*}\begin{split}
A_4 =&\ \tfrac 1{\rho^2} f(d^2)\partial_{ss}\gamma(\zeta)\cdot\big(\partial_{ss}\gamma(\zeta)\times V\big)
(\partial_{s}\gamma(\zeta)\cdot V)
\ +\\
&\ \tfrac 1{\rho^2} f(d^2)  \partial_s \gamma(\zeta)\cdot
(\partial_{ss}\gamma(\zeta)\times V )
(\partial_{ss}\gamma(\zeta)\cdot V) 
\ + \\
&\ \tfrac 1{\rho^3} f(d^2) 
((x-\gamma(\zeta))\cdot \partial_{sss}\gamma(\zeta))\, \partial_s
\gamma(\zeta)\cdot( \partial_{ss}\gamma(\zeta)\times V)(\partial_s \gamma(\zeta)\cdot V)
\\
=&\!:\ 0 + A_{4,1} + A_{4,2}.
\end{split}
\end{equation*}

{\bf \mb Step 5: Computation of $\partial_t X \cdot V$.\mn}
We expand \eqref{eq:dtXgamma} as
\begin{equation*}
\label{eq:dtXgammabis}
\partial_{t}X^i = f'(d^2)\partial_{t}d^2 \, \partial_s \gamma(\zeta)^i
+ f(d^2) \, \big[ \partial_{ss}\gamma(\zeta)^i \, \partial_{t}\zeta + 
\partial_{ts}\gamma(\zeta)\big]. 
\end{equation*}
Therefore, {\it at the point} $(x_0,t_0),$ we obtain from \eqref{eq:labonne}, \eqref{eq:sigma4} and
\eqref{eq:dtdsquaredpoint}
\begin{equation*}\label{eq:dtXdotV}
\partial_t X \cdot V =: B = B_1 + B_2 + B_3, 
\end{equation*} 
where
\begin{equation*}\label{eq:notsobad}
\begin{aligned}
B_1 
&:= -2  f'(d^2) ((x - \gamma(\zeta))\cdot (\partial_s \gamma(\zeta) \times
\partial_{ss}\gamma(\zeta))) \, ( \partial_s\gamma(\zeta) \cdot V)
\, , 
\\
B_2
&:= \tfrac 1 {\rho} f(d^2)\,  ((x-\gamma(\zeta))\cdot (\partial_s\gamma(\zeta)\times
\partial_{sss}\gamma(\zeta))) \,( \partial_{ss}\gamma(\zeta) \cdot V)\, ,
\\
B_3 
&:= f(d^2)  \, (\partial_s\gamma(\zeta) \times \partial_{sss}\gamma(\zeta))\cdot V\, . 
\end{aligned}
\end{equation*}

{\bf \mb Step 6: Proof of Proposition \ref{prop:core} completed.\mn} 
We write, {\it at the point} $(x_0,t_0),$ 
\begin{equation}\label{eq:rassemble}\begin{split}
\bigl| B - A \bigr| &= \bigl|\partial_t X \cdot V - D({\rm curl} X) : (V \otimes V)  \bigr| \\
& \leq |A_1| + |A_{2,1}| + |A_{2,2} - B_1| +  |A_3 - B_3| + |A_{4,1}| + |A_{4,2}| + |B_2|,
\end{split}\end{equation}
and we will estimate each of the terms in the last line separately. 
We first observe the following elementary facts that hold at the point $(x_0,t_0)$ (when they involve functions):
$$
\begin{array}{lll}
a) &\ds |V^\perp| \, , \, |V|\, ,\, |\partial_s\gamma(\zeta)| \leq 1,  &(\text{indeed } V\in S^2 \text{ and
}\eqref{eq:paramarc} \text{ holds}), \\
b) &\ds |f(d^2)|\leq 1,\: |f'(d^2)| = 1/r^2,  & (\text{follows from the definition of }f),\\
c) &\ds \rho\geq 1/2, |1-1/\rho|\leq 2d/r_0, |1 - 1/\rho^2|\leq 6d/r_0, &(\text{from }\eqref{eq:proche1} \text{ and
the definition of }\rho). 
\end{array}
$$
Also recall that by definition
$$
r_0 = |\partial_{ss}\gamma(t_0,s_0)|^{-1},\qquad\text{and}\qquad \Sigma_0 = |\partial_{sss}\gamma(t_0,s_0)|.
$$
Taking into account \eqref{eq:ortho1}, direct inspection yields  
\begin{equation}\label{eq:premieres}
|A_1| =  0, \qquad |A_{2,1}| \le 4 d (r^{2}r_0)^{-1} |V^\perp|, \qquad 
|A_{4,1}| \le 4r_0^{-2}|V^\perp|^2,
\end{equation}
as well as
\begin{equation*}\label{eq:secondes}
|A_{4,2}| \le  8 dr_0^{-1} \Sigma_0|V^\perp|,
\quad\quad 
|B_2| \le 2 dr_0^{-1}\Sigma_0 |V^\perp|.
\end{equation*}
Next, we write  
\begin{equation*}\label{eq:tierces}\begin{split}
| B_1 - A_{2,2}|  &= \big| \tfrac 2 \rho f'(d^2) (\partial_s\gamma(\zeta)\cdot V)
((x-\gamma(\zeta))\times \partial_{ss}\gamma(\zeta) )\cdot
 ( \partial_s\gamma(\zeta) - \tfrac V\rho )\big|\\
&\le 4 d(r^2r_0)^{-1} \big(|\partial_s\gamma(\zeta) - V| + 2 dr_0^{-1}\big),
\end{split}\end{equation*}
and
\begin{equation}\label{eq:quartes}\begin{split}
|B_3 - A_3| &=  \big| f(d^2)[(\partial_s\gamma(\zeta)\times \partial_{sss}\gamma(\zeta))\cdot V]( 1
 - \tfrac 1 {\rho^2}V\cdot \partial_s\gamma(\zeta))
\big|\\
&\le \Sigma_0 |V^\perp| \ \big( (1- \partial_s\gamma(\zeta)\cdot V) +
6dr_0^{-1}\big)\\
&\le \Sigma_0  \big( (1- \partial_s\gamma(\zeta)\cdot V) + 6dr_0^{-1}|V^\perp|\big).
\end{split}\end{equation}
It remains to bound $d$, $|V^\perp|$, $|\partial_s\gamma(\zeta) - V|$ and $|1- \partial_s\gamma(\zeta)\cdot V|$
in terms of $1-X\cdot V.$ To that purpose, first recall from the definition of $f$, from the fact that
$|V|=|\partial_s\gamma(\zeta)| = 1$, and from the assumption $d\leq r$, that 
\begin{equation}\label{eq:dominun}
1 - X \cdot V \geq  1- f(d^2) \geq d^2r^{-2}.
\end{equation}
Also, if $X\cdot V\geq 0$ then $1-X\cdot V \geq 1-\partial_s\gamma(\zeta)\cdot V,$ and if $X\cdot V <0$ then
$1-X\cdot V \geq 1 \geq (1-\partial_s\gamma(\zeta)\cdot V)/2.$ In any case, we have
\begin{equation}\label{eq:domindeux}
1 - X \cdot V \geq  \frac{1}{2} \left( 1 - \partial_s\gamma(\zeta)\cdot V\right).
\end{equation}
Finally, by Hilbert's projection theorem 
\begin{equation}\label{eq:domintrois}
|V^\perp|^2 \le |V - \partial_s\gamma(\zeta)|^2 = 2\big(1 - \partial_s\gamma(\zeta)\cdot
V\big) \leq 4 \big(1-X\cdot V\big).
\end{equation}
Inserting \eqref{eq:dominun}, \eqref{eq:domindeux}, or \eqref{eq:domintrois} in
\eqref{eq:premieres}-\eqref{eq:quartes}, and writing $x\preceq y$ for $x\leq y(1-X\cdot V)$, we obtain
$$
|A_1|\preceq 0,\quad |A_{2,1}|\preceq \frac{8}{r_0r},\quad |A_{4,1}|\preceq \frac{16}{r_0^2},
\quad |A_{4,2}|\preceq 16\frac{r}{r_0}\Sigma_0,
$$
$$|B_2|\preceq 4\frac{r}{r_0}\Sigma_0,\quad |B_1-A_{2,2}|\preceq
(1+\frac{r}{r_0})\frac{8}{r_0r},\quad |B_3-A_3|\preceq (2+12\frac{r}{r_0})\Sigma_0,  
$$
and summation according to \eqref{eq:rassemble} yields the claim.
\end{proof}

\section{A Gronwall estimate for \mb$F_{\gamma,\sigma,r}(\Gamma)$\mn }\label{sect:gron}

Let $\gamma \in L^\infty(I, H^4_{\rm loc}(\R,\R^3))$ 
and $\Gamma \in L^\infty(I, H^\frac32_{\rm loc}(\R,\R^3))$ be two solutions
of the binormal curvature flow equation \eqref{eq:strongbfbis} on $I\times \R,$ where $I=(-T,T)$ for some $T>0.$  
Assume moreover that $\gamma$ and 
$\Gamma$ are both quasiperiodic and that both have the same pitch, that is  
\begin{equation}\label{eq:perio0bis}
\begin{split}
&\gamma(t,s+\ell)\ = \,\gamma(t,s) + a,\qquad\forall t\in J,\, \forall\, s\in \R,\\
&\Gamma(t,s+L   ) = \,\Gamma(t,s) + a,\qquad\forall t\in J,\, \forall\, s\in \R,
\end{split} 
\end{equation}
for some $\ell>0,$ $L >0,$ and $a\in \R^3.$ It follows from \eqref{eq:strongbfbis} and our assumptions
on $\gamma$ and $\Gamma$ that\footnote{Actually $\gamma$ could be globally extended from $I$ to $\R$ as a solution, see
Subsection \ref{sub:Cauchy}, but the existence of $\Gamma$ has to be assumed a priori.}    $\gamma \in \mathcal{C}(\bar I,\mathcal{C}^3(\R,\R^3))$ and $\Gamma \in 
\mathcal{C}(I,\mathcal{C}(\R,\R^3)).$ 
For $t\in I,$ we set
$$
\gamma_t:=\gamma(t,\cdot),\qquad \Gamma_t:=\Gamma(t,\cdot),
$$
and define\footnote{We have already defined a quantity called $r_\gamma$ for a curve with no time dependence. One is the
natural extension of the other and there shouldn't be any possible confusion.}  
$$
r_\gamma := \max_{t\in \bar I} \left(\max_{s\in \R}|\partial_{ss}\gamma(t,s)|\right)^{-1}\:  \in (0,+\infty].
$$
Assume that at time zero we have 
\begin{equation*}\label{eq:toutpres0}
d_\mathcal{P}(\Gamma_0,\gamma_0)< r_\gamma/8,
\end{equation*}
and let therefore $\sigma_0$ be a reparametrization of $\gamma_0$ for $\Gamma_0$ (the existence of which being
ensured by Corollary \ref{cor:repar}).   

\begin{thm}\label{thm:binormalestim}
Fix $0<r\leq r_\gamma/8$ and assume that 
\begin{equation}\label{eq:layton0}
F(0):= F_{\gamma_0,\sigma_0,r}(\Gamma_0) < F_r := r \left(\sqrt{2} + \frac{r}{L}\right)^{-1}.
\end{equation}
Define then
$$
T_r := \tfrac{1}{K} \log\left(\tfrac{F_r}{F(0)}\right),
$$
where
$$
K :=\frac{8}{r^2} + \frac{32}{r_\gamma^2} + \left(2+32\frac{r}{r_\gamma}\right)
\|\partial_{sss}\gamma\|_{L^\infty(I \times \R)},
$$
and set $J_r=[-T_r,T_r]\cap (-T,T).$
There exists a unique $\sigma\in \mathcal{C}(J_r\times\R,\R)$ such that
$\sigma(0,\cdot)=\sigma_0$ and $\sigma_t:=\sigma(t,\cdot)$ is a reparametrization 
of $\gamma_t$ for $\Gamma_t,$ for every $t\in J_r.$ The function $F$ defined
 on $J_r$ by $F(t) := F_{\gamma_t,\sigma_t,r}(\Gamma_t)$  is
Lipschitz continuous on $J_r$ and satisfies for almost every $t\in J_r$
the inequality
$$
\big| F'(t) \big| \leq K F(t).
$$
Therefore
$$
F(t) \leq \exp(K|t|)F(0),\qquad\forall \: t\in J_r, 
$$
and in particular
$$
\dP(\Gamma_t,\gamma_t) < r, \qquad\forall \: t\in J_r. 
$$
\end{thm}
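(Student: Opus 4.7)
The plan is to close a continuation argument built on the Gronwall inequality $|F'(t)|\leq K F(t)$. Once this inequality is established on the interval where the reparametrization exists, together with Lemma~\ref{lem:estimhausbis} it forces $\dP(\Gamma_t,\gamma_t)<r\leq r_\gamma/8$ for $|t|<T_r$, which by Corollary~\ref{cor:repar} in turn guarantees that $\sigma_t$ can be continued up to time $T_r$.

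\textbf{Construction.} Let $T^{*}$ be the supremum of those $\tau\in[0,T)$ such that a continuous $\sigma\in\mathcal{C}([-\tau,\tau]\times\R,\R)$ exists with $\sigma(0,\cdot)=\sigma_0$, each $\sigma_t$ a reparametrization of $\gamma_t$ for $\Gamma_t$, and $\sup_{|t|\leq\tau}\dP(\Gamma_t,\gamma_t)<r_\gamma/8$. The hypothesis $F(0)<F_r$, continuity of $\Gamma$ and $\gamma$ and Corollary~\ref{cor:repar} imply $T^{*}>0$, and uniqueness of $\sigma$ on $(-T^{*},T^{*})\times\R$ follows from Corollary~\ref{cor:repar}. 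For $t\in(-T^{*},T^{*})$ I would introduce the parametrized vector field
\begin{equation*}
X(t,s,x) \ := \ \chi(x,\sigma(t,s),t)\,f\!\bigl(|x-\gamma(t,\xi(\sigma(t,s),x,t))|^{2}\bigr)\,\partial_{s}\gamma(t,\xi(\sigma(t,s),x,t)),
\end{equation*}
where $\xi$ is the implicit projection function of Lemma~\ref{lem:uniqprojloc} and $\chi$ is a smooth cut-off restricting $x$ to a tubular neighborhood of the local branch of $\gamma_t$ labelled by $\sigma(t,s)$, which makes $X$ globally well-defined even when $\gamma_t$ self-intersects. Property~4 of Lemma~\ref{lem:uniqprojloc} gives $\partial_s\xi\equiv 0$, hence $\partial_s X$ and $\partial_{si} X$ vanish along $\{x=\Gamma(t,s)\}$, which is \eqref{eq:ced2}; hypothesis \eqref{eq:ced1} follows from property~3 of Lemma~\ref{lem:uniqprojloc} and the common pitch of $\gamma$ and $\Gamma$. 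By construction $X(t,s,\Gamma(t,s))=f(|\Gamma_t(s)-\gamma_t(\sigma_t(s))|^{2})\partial_s\gamma_t(\sigma_t(s))$, so
\begin{equation*}
L-\int_0^{L}(X\circ\Gamma)\cdot\partial_s\Gamma\,ds \ = \ F(t).
\end{equation*}

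\textbf{Gronwall via Propositions \ref{prop:weakp} and \ref{prop:core}.} Proposition~\ref{prop:weakp} then yields, distributionally on $(-T^{*},T^{*})$,
\begin{equation*}
-F'(t) \ = \ \int_0^{L}(\partial_t X\circ\Gamma)\cdot\partial_s\Gamma \ - \ (D\,\mathrm{curl}\,X\circ\Gamma):(\partial_s\Gamma\otimes\partial_s\Gamma)\,ds.
\end{equation*}
For each $(t,s)$ I would apply Proposition~\ref{prop:core} at $x_0:=\Gamma_t(s)$, $s_0:=\sigma_t(s)$, with $V:=\partial_s\Gamma_t(s)\in S^{2}$: \eqref{eq:proche1} holds because $|x_0-\gamma_t(s_0)|<r_\gamma/8$ and $\|\partial_{ss}\gamma\|_{\infty}\leq 1/r_\gamma$, \eqref{eq:ortho1} holds by definition of a reparametrization, and \eqref{eq:labonne} follows from the binormal equation for $\gamma$ after one $s$-differentiation. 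The resulting constant is dominated by the $K$ of the statement since $1/r_0\leq 1/r_\gamma$ and $\Sigma_0\leq\|\partial_{sss}\gamma\|_{L^{\infty}(I\times\R)}$. Integration in $s$ over $[0,L]$ then gives $|F'(t)|\leq K F(t)$ in $\mathcal{D}'((-T^{*},T^{*}))$; since the right-hand side of the displayed formula is in $L^{\infty}_{\rm loc}$ (by $\gamma\in\mathcal{C}(\bar{I},\mathcal{C}^{3})$ and $|\partial_s\Gamma|=1$ a.e.), $F$ is Lipschitz and the inequality holds almost everywhere.

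\textbf{Closing the continuation.} Gronwall gives $F(t)\leq e^{K|t|}F(0)$ on $(-T^{*},T^{*})$, which for $|t|<T_r$ stays strictly below $F_r$. By Lemma~\ref{lem:estimhausbis},
\begin{equation*}
\dP(\Gamma_t,\gamma_t)^{2}\ \leq\ (\sqrt{2}\,r+r^{2}/L)\,F(t)\ <\ (\sqrt{2}\,r+r^{2}/L)\,F_r\ =\ r^{2}.
\end{equation*}
If $T^{*}<T_r\wedge T$, this uniform bound, together with continuity of $\Gamma$ and $\gamma$ and Corollary~\ref{cor:repar}, would allow $\sigma$ to be extended continuously past $T^{*}$ while keeping $\dP<r_\gamma/8$, contradicting maximality. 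Hence $T^{*}\geq T_r\wedge T$, and all the conclusions follow on $J_r$. \textbf{The main obstacle} is the construction of the parametrized $X$: away from self-intersections of $\gamma_t$ the purely geometric tubular-neighborhood construction of Section~\ref{sect:key} would suffice, but in general each point $\Gamma_t(s)$ must be projected onto the correct local branch of $\gamma_t$, which is exactly the role of $\sigma(t,s)$ and of the $s$-dependence built into Proposition~\ref{prop:weakp} via \eqref{eq:ced2}. A secondary technical point is the low regularity of $\sigma$ in $t$, which forces one to work distributionally in time and then to upgrade to an almost-everywhere bound via the $L^{\infty}_{\rm loc}$ estimate of the right-hand side.
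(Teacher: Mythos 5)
Your proposal follows essentially the same route as the paper's proof: define a maximal interval on which a continuous reparametrization $\sigma$ exists, build the parametrized vector field $X$ from the nearest-point projection (your explicit writing of $\xi(\sigma(t,s),x,t)$ clarifies the abbreviated notation the paper uses in its own definition of $X$), invoke Proposition~\ref{prop:weakp} and then Proposition~\ref{prop:core} pointwise in $s$ to get $|F'|\leq KF$, and close the bootstrap with Lemma~\ref{lem:estimhausbis} to show $\dP(\Gamma_t,\gamma_t)<r$ persists, allowing the continuation past the putative maximal time. The only substantive presentational difference is that the paper, in order to have a fixed radius for the cutoff $\chi_R$, freezes an arbitrary $T_1$ strictly below the maximal time $T_c$ and works on $[-T_1,T_1]$ before letting $T_1\uparrow T_c$; your phrasing ``a smooth cut-off restricting $x$ to a tubular neighborhood'' would need a similar choice to be made explicit, but this is a small point and does not affect the structure of the argument.
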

\begin{proof}
It follows from \eqref{eq:layton0} and Lemma \ref{lem:estimhausbis} that $\|\Gamma_0-\gamma_0\circ\sigma_0\|_\infty
<r.$ Since $\Gamma$ and $\gamma$ depend continuously on time, we have $\|\Gamma_t-\gamma_t\circ\sigma_0\|_\infty <
r$ for any $t$ in a sufficiently small neighborhood $J$ of $0$ in $I.$ For $t\in J,$ applying Corollary \ref{cor:repar} 
to the curves $\gamma_t$ and $\Gamma_t$ with the choice $p=\sigma_0$ yields a reparametrization $\sigma_t$ of $\gamma_t$
 for $\Gamma_t$ that satisfies $\|\Gamma_t-\gamma_t\circ\sigma_t\|_\infty \leq
\|\Gamma_t-\gamma_t\circ\sigma_0\|_\infty<r$ and which is the only reparametrization of
$\gamma_t$ for $\Gamma_t$ that satisfy $\|\sigma_t-\sigma_0\|_\infty < r_\gamma/8.$  
Applying the implicit function theorem (exactly as we did in to the proof of Lemma \ref{lem:uniqprojloc} but with an additional
time dependence) to the function
$$
(t,s,x,z) \mapsto \big(x - \gamma(t,z)\big)\cdot \gamma'(t,z),
$$  
we infer that $\sigma_t=\sigma(t,\cdot)$ for some continuous function $\sigma$ on $J\times\R$ which is also 
uniquely determined locally and hence globally on $J\times \R.$ In particular, $T_c>0,$ where
\begin{equation*}\begin{split}
T_c :=\sup \big\{ & 0<\tau< T \text{ s.t. } \big[ \exists \, \sigma\in \mathcal{C}((-\tau,\tau)\times\R,\R) \text{ s.t. }
 \sigma(0,\cdot)=\sigma_0 \text{ and  }
\forall t\in (-\tau,\tau)\\ 
&\: \sigma_t:=\sigma(t,\cdot)
 \text{ is a reparametrization of } \gamma_t \text{ for } \Gamma_t \text{ satisfying }
\|\Gamma_t-\gamma_t\circ\sigma_t\|_\infty <r \big]\big\}.
\end{split}\end{equation*}
We claim that either $T_c>T_r$ or $T_c=T.$ We argue by contradiction and assume that it is not the case.  
Fix an arbitrary $0<T_1<T_c, $ set 
$R:= \max_{t\in [-T_1,T_1]}\|\Gamma_t-\gamma_t\circ \sigma_t\|_\infty < r,$ and fix $\chi_R \in
\mathcal{C}^\infty(\R^+,[0,1])$ such that $\chi_R\equiv 1$ on $[0,R]$ and $\chi_R\equiv 0$ outside
$[r,+\infty).$ Define the parametrized vector field $X\in \mathcal{C}((-T_1,T_1)\times
\R,\mathcal{C}^2(\R^3,\R^3))$ by
\begin{equation}\label{eq:defXbis}
X(t,s,x) = \chi_R\big(x-\gamma(\sigma(t,s))\big) f_{r_\gamma}\big(|x-\gamma(\sigma(t,s))|^2\big)
\partial_s\gamma(t,\sigma(t,s)).
\end{equation}
Note that $X$ satisfies \eqref{eq:ced1} and \eqref{eq:ced2}, by Corollary \ref{cor:repar} conclusion $\mathit 1$ and
Lemma \ref{lem:uniqprojloc} conclusion $\mathit 4.$ 
Applying Proposition \ref{prop:weakp} to $\Gamma$ and $X$ on $(-T_1,T_1)$ and using Proposition \ref{prop:core}
to estimate the right-hand side of \eqref{eq:weakbfp}\footnote{Indeed, the $X$ defined in \eqref{eq:defXbis} 
is locally independent of $s$ (this is again Lemma \ref{lem:uniqprojloc} conclusion $\mathit 4$), and, 
removing locally the variable $s$, coincides there with the vector field $X$ 
defined in \eqref{eq:defX}.}, we obtain that $F$ is Lipschitzian on $[-T_1,T_1]$ and   
\begin{equation}\label{eq:gronici}
\left|\frac{d}{dt} F(t) \right| = \left| \frac{d}{dt} \int_0^L   \left(X\circ \Gamma\right) \cdot \partial_s\Gamma
\, ds \right | \leq K F(t),  
\end{equation}
where $K$ is given in the statement and where we have also used the fact that by definition of $R,$ 
 $\chi_R(\Gamma(t,s)-\gamma(t,\sigma(t,s))) \equiv 1.$ 
By Gronwall inequality and 
\eqref{eq:gronici}, we obtain $F(T_1)\leq \exp(KT_1)F(0)\leq\exp(KT_c)F(0)< F_c,$ independently of $T_1.$ Lemma
\ref{lem:estimhausbis} applied to $\sigma_{T_1}$, $\gamma_{T_1}$ and $\Gamma_{T_1}$ with the $r$ of the 
the statement  then implies that
\begin{equation*}\begin{split}
\big\|\Gamma_{T_1}-\gamma_{T_1}\circ\sigma_{T_1}\big\|_\infty^2 &\leq \big(\sqrt{2}r +
\tfrac{r^2}{L}\big)F(T_1)
\\
&\leq \big(\sqrt{2}r +
\tfrac{r^2}{L}\big)\exp(KT_c)F(0)
\\
&< \big(\sqrt{2}r +
\tfrac{r^2}{L}\big)\exp(KT_r)F(0) \leq r^2,
\end{split}
\end{equation*}
independently of $T_1.$ Applying Corollary \ref{cor:repar} as above for $T_1$ sufficiently close to $T_c$ and 
with the choice $p_0:=\sigma_{T_1}$, and taking into account the fact that $\Gamma$ and $\gamma$ are uniformly
continuous on $[-T_c,T_c]\times \R$ we would then be able to extend $\sigma$ past
 $T_c$ with the required properties, which is contradictory to the definition of $T_c.$     
\end{proof}

\section{Proof of Theorem \ref{thm:main}}\label{sect:th1}

We distinguish three cases. Case 1 is the base scenario where one can directly rely on the binormal flow estimate
of Section \ref{sect:gron}.  In Case 2 and Case 3, we will first modify the solution $u$ into solutions $\tilde u$ or
$\check u$ for which Case 1 may be applied, and then estimate the difference between $u$ and $\tilde u$ or $\check u$
separately.  

\medskip 

\noindent
{\bf Case 1: Assume that \mb$\int_{T^1} u = \int_{T^1} v.$\mn} We first apply Lemma \ref{lem:basepoint} to $u$ and $v$
successively. This yields solutions $\gamma_u\in \mathcal{C}(I,H^4_{\rm loc}(\R,\R))$ and $\gamma_v\in
\mathcal{C}(I,H^\frac32_{\rm loc}(\R,\R))$ of the binormal curvature flow equation, which for convenience we
will call $\gamma$ and $\Gamma$ respectively. Since $u$ and $v$ are $2\pi$-periodic with respect to the space variable, 
and since $\int_{T^1} u = \int_{T^1} v,$ it follows that $\gamma$ and $\Gamma$ satisfy conditions
\eqref{eq:perio0bis} with $\ell=L:=2\pi$ and $a:=\int_{T^1}u=\int_{T^1}v.$ As in Section \ref{sect:gron}, for $t\in I$ 
we set
$$
\gamma_t:=\gamma(t,\cdot),\qquad \Gamma_t:=\Gamma(t,\cdot),
$$
and define  
$$
r_u := \max_{t\in \bar I} \left(\sup_{s\in \R} |\partial_{ss}\gamma(t,s)|\right)^{-1} = \max_{t\in \bar I} \left(\sup_{s\in
T^1} |\partial_s u(t,s)|\right)^{-1}\in (0,+\infty].
$$
By construction, $\gamma_0(0)=\Gamma_0(0)=0.$ Applying Lemma \ref{lem:retour} with $\Gamma:=\Gamma_0$, $\gamma:=\gamma_0$
 and $L=\ell:=2\pi$, we infer that if
\begin{equation}\label{eq:acote0}
\int_{T^1}|v(0,s)-u(0,s)|^2\, ds < \frac{r_u^2}{128\pi}
\end{equation}
then $\dP(\gamma_0,\Gamma_0)<r_u/8,$ and there exists a reparametrization $\sigma_0$ of $\gamma_0$ for
$\Gamma_0,$ such that, setting $r:=r_u/8,$ we have
$$
F(0):= F_{\gamma_0,\sigma_0,r}(\Gamma_0) \leq \big( 1 + \tfrac{8^3\pi^2}{r_u^2} + \tfrac{2^8\pi^4}{r_u^4}\big)
\int_{T^1}|v(0,s)-u(0,s)|^2\, ds.
$$ 

Assume first that  
\begin{equation}\label{eq:acote1}\begin{split}
\int_{T^1}|v(0,s)-u(0,s)|^2\, ds &< \frac{r_u}{8}\big( 1 +
\tfrac{8^3\pi^2}{r_u^2}+ \tfrac{2^8\pi^4}{r_u^4}\big)^{-1}\big(\sqrt{2}+\tfrac{r_u}{16\pi}\big)^{-1}\\
&= \big( \tfrac{1}{2\pi} +
\tfrac{8\sqrt{2}}{r_u} + \tfrac{256\pi}{r_u^2} + \tfrac{512\sqrt{2}\pi^2}{r_u^3}+\tfrac{2^7\pi^3}{r_u^4} +
\tfrac{2^{11}\sqrt{2}\pi^4}{r_u^5}\big)^{-1}.
\end{split}
\end{equation}
Then \eqref{eq:layton0} holds in addition to \eqref{eq:acote0}, and therefore Theorem \ref{thm:binormalestim} provides the
existence of $\sigma \in \mathcal{C}(J_r\times \R,\R)$ such that  $\sigma_t:=\sigma(t,\cdot)$ is
a reparametrization of $\gamma_t$ for $\Gamma_t$ for all $t\in J_r$, and the estimate   
\begin{equation}\label{eq:quasifini0}
F_{\gamma_t,\sigma_t,r} \leq \exp(K_u|t|) F(0)\leq \exp(K_u|t|)\big( 1 + \tfrac{8^3\pi^2}{r_u^2}+ \tfrac{2^8\pi^4}{r_u^4}\big)
\int_{T^1}|v(0,s)-u(0,s)|^2\, ds
\end{equation}
for every $t\in J_r,$ where
$$
K_u := \frac{544}{r_u^2} + 6\|\partial_{ss}u\|_{L^\infty([-t,t]\times T^1)},\quad J_r:=[-T_r,T_r]\cap (-T,T),
\quad \text{and}\quad T_r :=  \tfrac{1}{K_u}
\log\left(\tfrac{F_r}{F(0)}\right).
$$
Using Lemma \ref{lem:aller}, we infer from \eqref{eq:quasifini0} that for every $t\in J_r,$ 
\begin{equation*}\label{eq:quasifini1}
\|v(t,\cdot)-u(t,\cdot+\sigma(t,0))\|^2_2 \leq C_{u,1}\exp(K|t|)\|v(0,\cdot)-u(0,\cdot)\|^2_2,
\end{equation*}
where
$$
C_{u,1}:= \big(4 + \tfrac{(1+2^{11})\pi^2}{r_u^2} +
\tfrac{16\sqrt{2}\pi^3}{r_u^3} + \tfrac{2^93\pi^4}{r_u^4} + \tfrac{2^{13}\sqrt{2}\pi^5}{r_u^5}
+\tfrac{2^8\pi^6}{r_u^6} + \tfrac{2^{12}\sqrt{2}\pi^7}{r_u^7}\big).
$$
On the other hand, if $T_r<T$ then for every $t$ in $(0,T)\setminus J_r$ we simply write  
\begin{equation}\label{eq:souris1}\begin{split}
\|v(t,\cdot)-u(t,\cdot+\sigma(T_r,0))\|^2_2 &\leq 8\pi = 8\pi\exp(-K_u|t|)\exp(K_u|t|)\\
&\leq 8\pi\exp(-KT_r)\exp(K_u|t|) = 8\pi\frac{F(0)}{F_r}\exp(K_u|t|)\\
&\leq C_{u,2} \exp(K_u|t|)\|v(0,\cdot)-u(0,\cdot)\|^2_2,
\end{split}
\end{equation}
where
$$
C_{u,2} := \big( 4 + \tfrac{2^6\sqrt{2}\pi}{r_u} + \tfrac{2^{11}\pi^2}{r_u^2}
+ \tfrac{2^{15}\sqrt{2}\pi^3}{r_u^3}+ \tfrac{2^{10}\pi^4}{r_u^4}+ \tfrac{2^{14}\sqrt{2}\pi^5}{r_u^5} \big).
$$
For $t$ in $(-T,0)\setminus J_r,$ we obtain the equivalent of \eqref{eq:souris1} replacing 
$\sigma(T_r,0)$ by $\sigma(-T_r,0).$ 

It remains to consider the situation where \eqref{eq:acote1} does not hold. There again, we simply write,
for any $t\in (-T,T),$
\begin{equation*}\label{eq:souris2}\begin{split}
\|v(t,\cdot)-u(t,\cdot)\|^2_2 &\leq 8\pi = 8\pi\|v(0,\cdot)-u(0,\cdot)\|^{-2}_2\|v(0,\cdot)-u(0,\cdot)\|^2_2\\
&\leq  C_{u,2} 
\|v(0,\cdot)-u(0,\cdot)\|^2_2\\
&\leq C_{u,2}\exp(K_u|t|) \|v(0,\cdot)-u(0,\cdot)\|^2_2.
\end{split}
\end{equation*}

Inspecting the constants $C_{u,1}$ $C_{u,2}$ and $K_u$, we conclude that for any $t\in (-T,T)$
\begin{equation*}
\|v(t,\cdot)-u(t,\cdot+\sigma(t))\|^2_2 \leq C_u \exp\big(K_u|t|\big)\|v(0,\cdot)-u(0,\cdot)\|^2_2,
\end{equation*}
where 
$$
C_u :=  \big(4 + \tfrac{2^6\sqrt{2}\pi}{r_u}+ \tfrac{(1+2^{11})\pi^2}{r_u^2} +
\tfrac{2^{15}\sqrt{2}\pi^3}{r_u^3} + \tfrac{2^93\pi^4}{r_u^4} + \tfrac{2^{14}\sqrt{2}\pi^5}{r_u^5}
+\tfrac{2^8\pi^6}{r_u^6} + \tfrac{2^{12}\sqrt{2}\pi^7}{r_u^7}\big)
$$
and where $\sigma$ is the continuous function on $(-T,T)$ defined by $\sigma\equiv0$ if \eqref{eq:acote1} does not hold
and $\sigma(t):=\sigma(t,0)$ for $t\in J_r$, $\sigma(t):=\sigma(-T_r,0)$ for $t\in(-T,-T_r),$ and
$\sigma(t):=\sigma(T_r,0)$ for $t\in (T_r,T)$ if \eqref{eq:acote1} holds. 

\smallskip

The constants $C_u$ and $K_u$ above are completely explicit, but at this stage they involve estimates on the 
solution $u$ on the whole  interval $[-T,T].$ In view of the Cauchy theory at the level of smooth solutions 
(see Section \ref{sect:cauchy}), we may then estimate both $C_u$ and $exp(K_u|t|)$ 
by a function $C(\|\partial_{sss}u(0,\cdot)\|_2,T).$ This completes the proof of Theorem \ref{thm:main} in
Case 1. 

\noindent
Notice that there wouldn't be any particular difficulty in turning $C(\|\partial_{sss}u(0,\cdot)\|_2,T)$
 into an explicit expression as well, but these expressions quickly become even more complicated (in
particular they involve double exponentials in time). For Case 2 and Case 3 in the sequel, we will rely on
 additional levels of approximation;  for convenience we will therefore abandon explicit expressions there 
even sooner than for Case 1. Without loss of generality, we will also consider $u$ as being globally defined in time.

\medskip

\noindent{\bf Case 2: Assume \mb$|\int_{T^1}u\, |\geq 1/2.$\mn} In that case we will take advantage of the
symmetries of \eqref{eq:schrodimap} and reduce it to a case similar to Case 1. More precisely, we first infer from
the Cauchy-Schwarz inequality that
\begin{equation}\label{eq:mazette}
\Big | \int_{T^1}u - \int_{T^1} v \Big| \leq \sqrt{2\pi} \|v(0,\cdot)-u(0,\cdot)\|_2.  
\end{equation}
As in Case 1, if the inequality 
\begin{equation}\label{eq:acote1bis}
 \|v(0,\cdot)-u(0,\cdot)\|_2 < \frac{1}{4\sqrt{2\pi}} 
\end{equation}
does not hold, then we simply write for any $t\in (-T,T),$
\begin{equation*}\label{eq:souris6}\begin{split}
\|v(t,\cdot)-u(t,\cdot)\|^2_2 &\leq 8\pi = 8\pi\|v(0,\cdot)-u(0,\cdot)\|^{-2}_2\|v(0,\cdot)-u(0,\cdot)\|^2_2\\
&\leq  2^8\pi^2\|v(0,\cdot)-u(0,\cdot)\|^2_2.
\end{split}
\end{equation*}
Assume next that \eqref{eq:acote1bis} holds. 
Then  $| \int_{T^1}u - \int_{T^1} v|\leq 1/4$ and there exist $R_\theta\in SO(3)$ and $\kappa\in (1/2,3/2)$ 
 such that the function 
$$
\tilde u_0(s) := R_\theta u(0,\tfrac{s}{\kappa})
$$
defined on $T^1_\ell$ with $\ell = 2\pi\kappa$ satisfies $\int_{T^1_\ell}\tilde u = \int_{T^1}v$ and
moreover\footnote{These can be checked from easy planar geometry.} 
\begin{equation}\label{eq:vindeglace0}
\|R_\theta - {\rm Id}\|_{\mathcal{L}(\R^3,\R^3)} \leq 2\sqrt{2\pi} \|v(0,\cdot)-u(0,\cdot)\|_2, \qquad |\kappa - 1|
\leq 2\sqrt{2\pi} \|v(0,\cdot)-u(0,\cdot)\|_2.
\end{equation}

The unique (global) solution $\tilde u$ of \eqref{eq:schrodimap} with initial datum $\tilde u_0$ is given by
\begin{equation}\label{eq:presse0}
\tilde u(t,s) = R_\theta u\big(\tfrac{t}{\kappa^2},\tfrac{s}{\kappa}\big).
\end{equation}  
We apply Lemma \ref{lem:basepoint} to $\tilde u$ and $v$
successively, which yield solutions $\gamma_{\tilde u} \in \mathcal{C}(I,H^4_{\rm loc}(\R,\R))$ and $\gamma_v\in
\mathcal{C}(I,H^\frac32_{\rm loc}(\R,\R))$ of the binormal curvature flow equation. We then follow the exact same 
lines as in Case 1, the only minor difference being that here $2\pi=:L\neq \ell$ and we merely have, in view of
\eqref{eq:vindeglace0}, 
\begin{equation*}\label{eq:longproche}
|L-\ell|^2 \leq 32\pi^3\|v(0,\cdot)-u(0,\cdot)\|_2^2,
\end{equation*}
whose only effect is to affect the value of some numerical constants (when we apply Lemma \ref{lem:aller} and 
Lemma \ref{lem:retour} only). Doing so, we obtain, for every $t\in (-T,T),$  
\begin{equation}\label{eq:vindeglace1}
\int_0^{2\pi} \big|v(t,s)-\tilde u(t,\kappa s + \sigma(t))\big|^2\, ds \leq C_{\tilde u}\exp(K_{\tilde u}|t|)\int_0^{2\pi}
\big|v(0,s)-\tilde u(0,\kappa s )\big|^2\, ds
\end{equation}
for some function $\sigma\in\mathcal{C}(I,T^1_\ell),$ where
$$
C_{\tilde u} \leq C(\|\tilde u_0'''\|_2,T) \leq C(\|\partial_{sss}u(0,\cdot)\|_2,T). 
$$
In view of \eqref{eq:presse0}, \eqref{eq:vindeglace1} translates into
\begin{equation}\label{eq:cosma1}
\int_0^{2\pi} \big|v(t,s)-R_\theta u\big(\tfrac{t}{\kappa^2},s + \tfrac{1}{\kappa}\sigma(t)\big)\big|^2\, ds \leq
 C(\|\partial_{sss}u(0,\cdot)\|_2,T)\int_0^{2\pi}
\big|v(0,s)- R_\theta u(0,s)\big|^2\, ds.
\end{equation}
Now, by \eqref{eq:vindeglace0} we estimate
\begin{equation*}\label{eq:cosma2}
\int_0^{2\pi} \big|(R_\theta-{\rm Id}) u\big(\tfrac{t}{\kappa^2},s+ \tfrac{1}{\kappa}\sigma(t)\big)\big|^2\, ds
\leq 2\pi \|R_\theta - {\rm Id}\|_{\mathcal{L}(\R^3,\R^3)}^2 \leq 16\pi^2 \|v(0,\cdot)- u(0,\cdot)\|_2^2,
\end{equation*}
and
\begin{equation*}\label{eq:cosma3}
\int_0^{2\pi} \big|(R_\theta-{\rm Id}) u\big(0,s\big)\big|^2\, ds
\leq 2\pi \|R_\theta - {\rm Id}\|_{\mathcal{L}(\R^3,\R^3)}^2 \leq 16\pi^2 \|v(0,\cdot)-u(0,\cdot)\|_2^2.
\end{equation*}
Also, 
\begin{equation}\label{eq:cosma4}
\begin{split}
\int_0^{2\pi} \big| u\big(\tfrac{t}{\kappa^2},s+ \tfrac{1}{\kappa}\sigma(t)\big)-u\big(t,s+
\tfrac{1}{\kappa}\sigma(t)\big) \big|^2\, ds 
&\leq t^2 (1-\frac{1}{\kappa^2})^2 \|\partial_t
u\|_{L^\infty((-4T,4T),L^2(T^1))}^2 \\
&\leq C t^2 |1-\kappa|^2 \|\partial_{ss}
u\|_{L^\infty((-4T,4T),L^2(T^1))}^2 \\
&\leq C(\|\partial_{sss}u(0,\cdot)\|_2,T)  \|v(0,\cdot)-u(0,\cdot)\|_2^2.
\end{split}
\end{equation}
Combining \eqref{eq:cosma1}-\eqref{eq:cosma4} we complete the proof of Theorem \ref{thm:main} in Case 2.

\medskip

\noindent{\bf Case 3: Assume \mb$|\int_{T^1}u\, |< 1/2.$\mn}
As in Cases 1 and 2, if the inequality 
\begin{equation}\label{eq:jklm}
 \|v(0,\cdot)-u(0,\cdot)\|_2 < \min\Big(\frac{1}{60}\sqrt{\frac{\pi}{2}}\|\partial_s u(0,\cdot)\|_\infty^{-1},
\frac{\sqrt{2\pi}}{5}\Big) 
\end{equation}
does not hold, then we simply write for any $t\in (-T,T),$
\begin{equation*}\label{eq:souris7}\begin{split}
\|v(t,\cdot)-u(t,\cdot)\|^2_2 &\leq 8\pi = 8\pi\|v(0,\cdot)-u(0,\cdot)\|^{-2}_2\|v(0,\cdot)-u(0,\cdot)\|^2_2\\
&\leq \max\big(2^83^25^2 \|\partial_s u(0,\cdot)\|_\infty^{2}, 2^35^2\big) \|v(0,\cdot)-u(0,\cdot)\|^2_2.
\end{split}
\end{equation*}
Assume next that \eqref{eq:jklm} holds. 

For $e\in S^2$ and $r\geq 0,$ we denote by $M_{e,r}$ the M\"obius transform on $S^2$ given by $M_{e,r} :=
S_e^{-1}\circ D_\alpha \circ S_e,$ where $\alpha:=1+r$, $S_e$ is the stereographic projection from $S^2\setminus\{e\}$ to 
the equatorial plane of $S^2$ (with $e$ as north pole), and
$D_\alpha$ is the dilation of magnitude $\alpha$ in that equatorial plane. More explicitly, in spherical coordinates
$(\varphi,\theta)$ on $S^2$ such that $\varphi=0$ corresponds the point $e$ we have 
$$
M_{e,r}(\varphi,\theta) := \left( 2 \arctan \Big( \frac{1}{\alpha}\tan\big(\frac{\varphi}{2}\big)\Big)\,,\,
\theta\, \right).
$$    
{\bf Claim:} There exist $e_0\in S^2$ and $r_0\geq 0$ such that $\check u_0 := M_{e_0,r_0}\circ u(0,\cdot)$ satisfies 
$\int_{T^1} \check u_0 = \int_{T^1} v$ and moreover we have the estimate
\begin{equation}\label{eq:riquiqui}
r_0 \leq \max\big(60\sqrt{2/\pi}\|\partial_s u(0,\cdot)\|_\infty,5/\sqrt{2\pi}\big) \|v(0,\cdot)-u(0,\cdot)\|_2.
\end{equation}
\begin{proof}[Proof of the claim]
Fix first $e\in S^2$ and $0 \leq  r \leq 1.$ Because of the assumption $| \int_{T^1}u \, |<1/2,$ the image of
$u(0,\cdot)$ cannot be contained entirely in the spherical cap $\{0\leq \varphi \leq \arccos(1/2) = \pi/3\},$
nor in its opposite spherical cap $\{2\pi/3\leq \varphi\leq \pi\}.$ Let therefore $s_e\in T^1$ be such that $\pi/3 \leq \varphi(u(0,s_e))\leq
2\pi/3.$ For every $s\in T^1$ such that $|s-s_e| \leq \pi/(12 \|\partial_su(0,\cdot)\|_\infty),$ we have
$\pi/4 \leq \varphi(u(0,s))\leq 3\pi/4.$ For every point $p$ on $S^2$, we have $M_{e,r}(p)\cdot e \geq p\cdot e$,
and for those points $p$ such that $\pi/4 \leq \varphi(p)\leq 3\pi/4$ we even have 
$ M_{e,r}(p)\cdot e \geq p\cdot e + cr$, where\footnote{One can compute the value of $c$ noticing that the 
worst case scenario corresponds to $r=1$ and $\varphi(\bar p)=\pi/4$, so that $M_{e,1}(\bar p)\cdot e = \cos(\varphi(M_{e,1}(\bar p)))=:\cos(\bar
\varphi)$ is the solution of the equation
$\sin(\bar\varphi)/(1-\cos(\bar\varphi))=2\sin(\pi/4)/(1-\cos(\pi/4))=2(\sqrt{2}-1)$ which one obtains by taking
squares of both sides and easy algebra.} 
$$
c:= \frac{4-\sqrt{17-12\sqrt{2}}}{7-2\sqrt{2}} - \frac{\sqrt{2}}{2} \geq \frac{1}{5}. 
$$   
In particular, we have
\begin{equation*}\label{eq:cabougepasmal}
e \cdot \int_{T^1} M_{e,r}(u(0,s))\, ds \geq e \cdot \int_{T^1} u(0,s)\, ds +
\min\Big(\frac{\pi}{60}\|\partial_su(0,\cdot)\|_\infty^{-1},\frac{2\pi}{5}\Big) r,  
\end{equation*}
and therefore by \eqref{eq:mazette}
 \begin{equation}\label{eq:cabougepasmalbis}
e \cdot \int_{T^1} M_{e,r}(u(0,s))- v(0,s)\, ds \geq -\sqrt{2\pi} \|v(0,\cdot)-u(0,\cdot)\|_2  +
\min\Big(\frac{\pi}{60}\|\partial_su(0,\cdot)\|_\infty^{-1},\frac{2\pi}{5}\Big)r. 
\end{equation}
Set $\bar r := \max\big(60\sqrt{2/\pi}\|\partial_s u(0,\cdot)\|_\infty,5/\sqrt{2\pi}\big) \|v(0,\cdot)-u(0,\cdot)\|_2$;
in particular from \eqref{eq:jklm} we see that $\bar r \leq 1.$ The function
$$
G\,:\, B[0,\bar r] \subset \R^3 \to \R^3,\ x \mapsto \left\{ 
\begin{array}{ll}
\ds \int_{T^1} M_{x/|x|,|x|}(u(0,s))- v(0,s)\, ds & \text{if } x\neq 0,\\[10pt]
\ds \int_{T^1} u(0,s)- v(0,s)\, ds & \text{if } x=0,
\end{array}
\right.
$$
is continuous on $B[0,\bar r]$, and in view of \eqref{eq:cabougepasmalbis}, it satisfies
$$
G(x)\cdot x \geq 0 \qquad \forall x\in \partial B[0,\bar r]. 
$$
If follows from the Poincar\'e-Bohl theorem \cite{Had} that $G$ has at least one zero $x_0$ in $B[0,\bar r].$  Our 
claim is proved choosing $r_0:=|x_0|$ and $e_0:=x_0/|x_0|$ (If $|x_0|=0 $ we are in Case 1).  
\end{proof}
Let $\check u \in \mathcal{C}(\R,H^3(T^1,S^2))$ denote the solution of \eqref{eq:schrodimap} with initial datum
$\check u_0.$ We first apply Theorem \ref{thm:main} Case 1 to $\check u$ and $v$. This yields the estimate
\begin{equation}\label{eq:vindeglace2}
\int_0^{2\pi} \big|v(t,s)-\check u(t, s + \sigma(t))\big|^2\, ds \leq C_{\check u}\exp(K_{\check u}|t|)\int_0^{2\pi}
\big|v(0,s)-\check u(0, s )\big|^2\, ds
\end{equation}
for every $t\in I$ and for some function $\sigma\in\mathcal{C}(I,T^1),$ where, since $M_{e,r} \in \mathcal{C}^\infty(S^2,S^2),$
$$
C_{\check u} \exp(K_{\check u}|t|)\leq C\big(\|\partial_{sss}\check u_0\|_2,T\big) \leq
C\big(\|\partial_{sss}u(0,\cdot)\|_2,T\big) .
$$
Next, using \eqref{eq:riquiqui} we infer that 
$$\|u(0,\cdot)-\check u_0\|_{H^1}^2 \leq
C\big( \|\partial_{s}u(0,\cdot)\|_\infty\big)\|v(0,\cdot)-u(0,\cdot)\|_2^2,
$$
and therefore from Corollary \ref{cor:controldiff} we obtain, for every $t\in I,$
\begin{equation}\label{eq:vindeglace3}\begin{split}
\|u(t,\cdot+\sigma(t))-\check u(t,\cdot+\sigma(t)\|_{2}^2 &=
\|u(t,\cdot)-\check u(t,\cdot)\|_{2}^2\\
& \leq \|u(t,\cdot)-\check u(t,\cdot)\|_{H^1}^2\\ 
&\leq C\big(\|\partial_{sss}u(0,\cdot)\|_2,T\big)\|v(0,\cdot)-u(0,\cdot)\|_2^2.
\end{split}
\end{equation}
Combining \eqref{eq:vindeglace2} and \eqref{eq:vindeglace3}, the proof of Theorem \ref{thm:main} in Case 3
is completed. \qed

\section{Proof of Theorem \ref{thm:illposed}}\label{sect:th2}

We begin with a simpler example than the one stated in Theorem \ref{thm:illposed}. This example in itself doesn't
imply the kind of non-continuity of the flow map which we point at, but hopefully it raises the intuition sufficiently
 so that the actual example will appear natural.  

\medskip

Consider, for $\eps$ and $p$ positive, the helix $\gamma_0\equiv \gamma_{0,\eps,p}$ of radius $\eps$ and pitch $2\pi p$ oriented along 
the $e_1$  direction:
$$
\gamma_0(s) = \left(\, p \mathfrak{s}  \, , \, \eps \cos\big( \mathfrak{s}\big)\,
 , \, \eps \sin\big( \mathfrak{s}\big)\, \right),
$$
where $\mathfrak{s}:= s /\sqrt{\eps^2+p^2}$ so that $\gamma_0$ is parametrized by arc-length. Explicit computations
 yield the identity
$$
\Big(\partial_{s}\gamma_0 \times \partial_{ss} \gamma_0\Big)(s) = -\frac{p}{\eps^2+p^2}\partial_s\gamma_0(s) +
\frac{1}{\sqrt{\eps^2+p^2}}(1,0,0).
$$
If we denote by $\gamma\equiv \gamma_{\eps,p}$ the solution of the binormal curvature flow equation \eqref{eq:strongbfbis} with initial datum
$\gamma_0$, then it follows that
$$
\gamma(t,s) = \gamma_0(\xi) + \frac{t}{\sqrt{\eps^2+p^2}}(1,0,0),
$$ 
where $\xi := s - p\,t/(\eps^2+p^2).$ In particular\footnote{The two fractions inside the large parenthesis
correspond to what will be called $-C$ and $\Omega$ in the actual example later.}
\begin{equation}\label{eq:deplacement}
\gamma(t,0) = \left( - \frac{p^2}{(\eps^2+p^2)^\frac32}  +  \frac{1}{(\eps^2+p^2)^\frac12}\right)(t,0,0) =
\frac{\eps^2}{(\eps^2+p^2)^\frac32}(t,0,0).
\end{equation}
Given $\sigma_0 >0$, if we consider now sequences $(\eps_m)_{m\in \N}$ and $(p_m)_{m\in \N}$ such that
$$
\eps_m\to0,\qquad p_m\to 0,\quad \text{and} \quad \eps_m^2/p_m^3 \to \sigma_0,
$$
as $m\to +\infty,$ then the sequence $(\gamma_{0,\eps_m,p_m})_{m\in \N}$ is bounded in $H^{\frac32}_{\rm loc}(\R,\R^3)$ and weakly
converges in $H^{\frac32}_{\rm loc}(\R,\R^3)$ to the function $\gamma_0^*(s):=(s,0,0)$, which is a stationary solution
of equation \eqref{eq:strongbfbis}. On the other hand, from \eqref{eq:deplacement} we infer that for every $t\in \R$ 
the sequence $(\gamma_{\eps_m,p_m}(t,\cdot))_{m\in \N}$ 
weakly converges in  $H^{\frac32}_{\rm loc}(\R,\R^3)$ to the function $\gamma_0^*(s+\sigma_0t).$ If we require
moreover that $1/\sqrt{\eps_m^2+p_m^2} \in \N$ (e.g. by choosing $\eps_m^2 := \sigma_0 p_m^3$ and $p_m$ as the
unique positive solution of $\sigma_0 p_m^3+p_m^2 = m^{-2}$) then the functions $u_n:=\partial_s
\gamma_{\eps_m,p_m}$ are solutions of the Schrodinger map equation \eqref{eq:schrodimap} such that
$u_m(0,\cdot)$ weakly converge in $H^\frac12(T^1,S^2)$ as $m\to +\infty$ to the stationary solution $u^*(0,s):=(1,0,0)$ and
such that $u_m(t,\cdot)$ weakly  converge in $H^\frac12(T^1,S^2)$ as $m\to +\infty$ to $u^*(t,\cdot + \sigma_0t).$

\smallskip

Of course here, $u^*$ is a constant function of space and time, and in particular  $u^*(t,\cdot + \sigma_0t) =
u^*(t,\cdot).$  In order to exhibit a sequence having the properties stated in Theorem \ref{thm:illposed}, 
 we build up on the previous example modifying it in such a way that the limit solution $u^*$ is no longer
constant but satisfies $u^*(t,s) = (\cos(s),\sin(s),0).$  Roughly speaking, it suffices to wrap the original helix
$\gamma_0$ around a fixed unit circle in the $(e_1,e_2)$-plane instead of letting it extend along the $e_1$-axis.
 One can probably do so just by a simple ansatz and then some possibly more delicate analysis. Instead, we will
rely on known explicit traveling wave solutions of \eqref{eq:strongbfbis} that behave exactly as wrapped helices.
We describe them now.       

\smallskip 
  
In 1981, Kida \cite{Ki} studied the set of initial curves in $\R^3$ for which the solution map of the binormal curvature
flow equation reduces to a family of rigid motions. Using the symmetries of \eqref{eq:strongbfbis} and its
conservations laws, such motions are
necessarily the superposition of a constant speed rotation around a fixed axis (which we may always assume to be
the $x_3$-axis after a fixed rotation) and a constant speed translation parallel to that same axis. Following
\cite{Ki}, we denote
by $\Omega$ and $V$ the speeds of rotation around $e_3$ and of translation along $e_3$ respectively, and by $C$ 
the speed of the slipping motion of the curve along itself\footnote{Even though the speed given by the binormal curvature flow equation
is perpendicular to the tangent vector, in a non-orthogonal frame like $(\partial_s\gamma,\gamma \times e_3 ,e_3)$
the component $C$ of $\partial_t \gamma$ along $\partial_s\gamma$ may not be zero. For the rigid motions we
consider, $C$ is a constant function of space and time.}, so that
$$
\gamma(t,s) = \gamma(0,s-Ct)\cdot \left(\begin{smallmatrix}
\cos(\Omega t) & \sin(\Omega t) & 0\\
-\sin(\Omega t) & \cos(\Omega t) & 0\\
0 & 0 & 1 
\end{smallmatrix}
\right)
 + V(0,0,t)
$$
and therefore $\gamma$ satisfies the additional equation
\begin{equation}\label{eq:vitessedecomp}\partial_t \gamma = -C \partial_s \gamma + \Omega e_3\times \gamma + Ve_3.
\end{equation}
Combining \eqref{eq:strongbfbis} and \eqref{eq:vitessedecomp} we write
\begin{equation}\label{eq:firstnation0}
\partial_s\gamma \times \partial_{ss} \gamma =  -C \partial_s \gamma + \Omega e_3\times \gamma + Ve_3.
\end{equation}
Taking the scalar product of \eqref{eq:firstnation0} with $\partial_s\gamma$  yields
\begin{equation}\label{eq:firstnation1}
- C + \Omega r^2\theta' + Vz' = 0,
\end{equation}
where we wrote $(\gamma^1(0,\cdot),\gamma^2(0,\cdot)) =: (r(\cdot)\cos(\theta(\cdot)),r(\cdot)\sin(\theta(\cdot)))$
and $\gamma^3(0,\cdot) =: z(\cdot).$ Taking the vector product of \eqref{eq:firstnation0} with 
$\partial_s \gamma$ instead yields 
\begin{equation}\label{eq:firstnation2}
z'' = -\Omega r r' = - \frac{\Omega}{2} R',
\end{equation}
where $R = r^2.$  After integration, \eqref{eq:firstnation2} leads to 
\begin{equation}\label{eq:firstnation3}
z' = \frac{\Omega}{2}\left[ A - R\right]
\end{equation}
for some integration constant $A\in \R.$ Assume that $\Omega\neq 0.$ Combining \eqref{eq:firstnation1} and
 \eqref{eq:firstnation3} we obtain
\begin{equation}\label{eq:firstnation4}
\theta ' = \frac{1}{2} V + (C-\frac{1}{2}AV\Omega)/(\Omega R),
\end{equation}
and then combining \eqref{eq:strongbfbis} with \eqref{eq:firstnation3} and \eqref{eq:firstnation4} we finally
obtain
\begin{equation}\label{eq:firstnation5}
(R')^2 + f(R) =0,
\end{equation}
where
\begin{equation}\label{eq:cestf}
f(R) = \Omega^2 R^3 + (V^2-2A\Omega^2)R^2 + (4V(C-\frac{1}{2}AV\Omega)/\Omega+\Omega^2A^2-4)R +
4(C-\frac{1}{2}AV\Omega)^2/\Omega^2.
\end{equation}
We require the parameters $A,C,V,\Omega$ to be such that $f$ has two positive roots and one negative root,
and we then write
\begin{equation}\label{eq:cestf2}
f(R)=: \Omega^2(R-\alpha)(R-\beta)(R+\delta)
\end{equation}
where $0<\beta\leq \alpha$ and $0<\delta.$ The solution of equation \eqref{eq:firstnation5} is given in terms of the
Jacobi elliptic function ${\rm sn}$ by
\begin{equation}\label{eq:R}
R(s) = \alpha + (\beta-\alpha) {\rm sn}^2(\frac{1}{2}(\alpha+\delta)^\frac12 \Omega s | k)
\end{equation}
where $k := ((\alpha-\beta)/(\alpha+\delta))^\frac12\in [0,1).$   Inserting \eqref{eq:R} in \eqref{eq:firstnation3}
and then integrating in $s$ yields 
\begin{equation}\label{eq:Z}
z(s) = z(0) + \frac{\Omega}{2}(A+\delta)s - (\alpha+\delta)^\frac12 E(\frac{1}{2}(\alpha+\delta)^\frac12 \Omega s |
k ),
\end{equation}
where $E(u|k) := \int_0^u {\rm dn}^2(u',k)du' = \int_0^u (1-k^2{\rm sn}^2(u',k))du'$ is the incomplete elliptic integral of second kind. 
Inserting \eqref{eq:R} in \eqref{eq:firstnation4} and then integrating in $s$ yields 
$$
\theta(s) = \theta(0) + \frac{1}{2}Vs + \frac{2C-AV\Omega}{\alpha(\alpha+\delta)^\frac12 \Omega^2}
\Pi(\frac12(\alpha+\delta)^\frac12\Omega s | \frac{\alpha-\beta}{\alpha},k),
$$ 
where $\Pi(u|\ell,k):= \int_0^u (1-\ell{\rm sn}^2(u',k))^{-1}du'$ is the incomplete elliptic integral of third kind. 
Notice that if $\alpha=\beta$ then $k=0$, and if moreover $\alpha=\beta=A=1$ then the corresponding solution $\gamma$ is a circle
of radius one in the $(x_1,x_2)$-plane. For $0<k<<1$, 
 in particular if $\alpha+\delta\simeq 1$ and  $0<\alpha-\beta<<1,$ the Jacobi elliptic function ${\rm sn}(\cdot,k)$ resembles 
the $\sin$ function. We wish therefore to find coefficients $A,C,V,\Omega$ such that $f$ vanishes on the
negative axis, such that $\alpha+\delta\simeq 1,$ $0<\alpha-\beta<<1$, $\Omega>>1,$ and $(\alpha-\beta)^2\simeq
\Omega^{-3}$, so that the corresponding solutions $\gamma$ behave like the wrapped
helices which we discussed at the beginning of this section, with pitch $\simeq 2\pi\Omega^{-1}$ and radius $\simeq
\alpha-\beta.$ We need also that $\partial_s\gamma$ is well-defined
on $T^1.$ We explain the details now.

The function $R$ is periodic with period
\begin{equation}\label{eq:TR}
T_R := \frac{4K(k)}{(\alpha+\delta)^\frac12 \Omega}.
\end{equation}
The function $z$ satisfies
$$
z(s+T_R) = z(s) + \frac{\Omega}{2}(A+\delta)T_R - 2(\alpha+\delta)^\frac12 E(k),
$$
where $E(k):=E(K(k)|k)$ is the complete elliptic integral of the second kind. In particular
\begin{equation}\label{eq:schrimp0}
z'(s+T_R) = z'(s)\qquad \forall s\in \R.
\end{equation}
The function $\theta$ satisfies
\begin{equation}\label{eq:schrimp1}
\theta(s+T_R) = \theta(s) + \frac{1}{2}VT_R +
2 \frac{2C-AV\Omega}{\alpha(\alpha+\delta)^\frac12\Omega^2}\Pi(\frac{\alpha-\beta}{\alpha},k)
\end{equation}
where $\Pi(\ell,k):=\Pi(K(k)|l,k)$ is the complete elliptic integral of the third kind. 

We fix $m\in \N\setminus\{0\}$ and we require that
\begin{equation}\label{eq:coco1}
\theta(s+T_R) = \theta(s) + \frac{2\pi}{m},
\end{equation}
so that both $R$ and $\exp(i\theta)$ are $mT_R$-periodic. Since $z$ 
satisfies \eqref{eq:schrimp0}, we infer that if \eqref{eq:coco1} holds, then
$\partial_s \gamma$ is well-defined on $T^1_\ell,$ where 
$$
\ell := mT_R.
$$
At the end, we will also require that $\ell = 2\pi,$ but we found more convenient 
to let $\ell$ as a free parameter at this stage and to use scaling properties at the very last step.
In view of \eqref{eq:schrimp1}, our requirement \eqref{eq:coco1} translates into
\begin{equation}\label{eq:schrimp2}
\frac{2K(k)V}{(\alpha+\delta)^\frac12\Omega} + 2
\frac{2C-AV\Omega}{\alpha(\alpha+\delta)^\frac12\Omega^2}\Pi(\frac{\alpha-\beta}{\alpha},k) = \frac{2\pi}{m}.
\end{equation}

Equating the coefficients of order zero in $R$ in \eqref{eq:cestf} and \eqref{eq:cestf2}, we obtain
\begin{equation}\label{eq:mouche0}
C-\frac12 AV\Omega = \frac{\Omega^2}{2}(\alpha\beta\delta)^\frac12.
\end{equation}
Inserting \eqref{eq:mouche0} in \eqref{eq:schrimp2}, we obtain
\begin{equation}\label{eq:ratioVOmega}
\frac{V}{\Omega} = \left[ \frac{2\pi}{m} -
2\frac{\beta^\frac12\delta^\frac12}{\alpha^\frac12(\alpha+\delta)^\frac12}
\Pi\Big(\frac{\alpha-\beta}{\alpha},k\Big)\right] \frac{(\alpha+\delta)^\frac12}{2K(k)},
\end{equation}
which determines the ratio $V/\Omega$ in terms of $\alpha,\beta$ and $\delta$ only\footnote{Recall that $k$ is a
function of $\alpha,\beta,\delta.$}.

Equating the coefficients of order two in $R$ in \eqref{eq:cestf} and \eqref{eq:cestf2}, we obtain
\begin{equation}\label{eq:crayon0}
A = \frac{1}{2}\left(\alpha+\beta-\delta +\frac{V^2}{\Omega^2}\right),
\end{equation}
and therefore using \eqref{eq:ratioVOmega} we obtain
\begin{equation}\label{eq:valeurA}
A = \frac{1}{2}\left( \alpha+\beta-\delta +\left[ \frac{2\pi}{m} -
2\frac{\beta^\frac12\delta^\frac12}{\alpha^\frac12(\alpha+\delta)^\frac12}
\Pi\Big(\frac{\alpha-\beta}{\alpha},k\Big)\right]^2 \frac{\alpha+\delta}{4K^2(k)},
\right) 
\end{equation} 
so that $A$ too is determined in terms of $\alpha,\beta$ and $\delta$ only.

Finally, equating the coefficients of order one in $R$ in \eqref{eq:cestf} and \eqref{eq:cestf2}, we obtain
\begin{equation}\label{eq:montre0}
\Omega^2(\alpha\beta-\alpha\delta-\beta\delta) = 4V(C-\frac{1}{2}AV\Omega)/\Omega + \Omega^2A^2 - 4,
\end{equation}
and therefore using \eqref{eq:mouche0}, \eqref{eq:ratioVOmega} and \eqref{eq:valeurA} we obtain the equation
\begin{equation}\label{eq:cestg}
g(\alpha,\beta,\delta)\Omega^2 = 4,
\end{equation}
where
\begin{equation*}\begin{split}
g(\alpha,\beta,\delta)&:= \frac{1}{4}\left( \alpha+\beta-\delta +\left[ \frac{2\pi}{m} -
2\frac{\beta^\frac12\delta^\frac12}{\alpha^\frac12(\alpha+\delta)^\frac12}
\Pi\Big(\frac{\alpha-\beta}{\alpha},k\Big)\right]^2 \frac{\alpha+\delta}{4K^2(k)}
\right)^2 \\
&+ 2\alpha^\frac12\beta^\frac12\delta^\frac12 \left[ \frac{2\pi}{m} -
2\frac{\beta^\frac12\delta^\frac12}{\alpha^\frac12(\alpha+\delta)^\frac12}
\Pi\Big(\frac{\alpha-\beta}{\alpha},k\Big)\right] \frac{(\alpha+\delta)^\frac12}{2K(k)}\\
& -\alpha\beta + \alpha\delta + \beta\delta.
\end{split}\end{equation*}
Hence, $\Omega$ is determined in terms of $\alpha,\beta,\delta,$ provided $g\neq 0.$

We assume $m\geq 2$ (we are ultimately interested only in the case $m\to +\infty$) and we start from the 
 solution\footnote{Notice that $\Pi(0,0)=K(0)=\pi/2.$} of \eqref{eq:cestg} given by
$$
\alpha=\beta:=1, \quad \delta:=\frac{1}{m^2 - 1},\qquad \Omega :=  \sqrt{m^2-1},
$$ 
which corresponds  to
$$
A=1,\qquad V=1,\qquad C = \sqrt{m^2-1},
$$
and for which $\ell=mT_R=2\pi$ and $g(\alpha,\beta,\delta)=g(1,1,\tfrac{1}{m^2-1})=\tfrac{4}{m^2-1}.$ By continuity, $g$ does not
vanish for $(\alpha,\beta,\delta)$ in a neighborhood\footnote{A neighborhood in the sector
$\{\alpha\geq\beta>0\}\cap\{\delta>0\}$, that is the domain of $g$.} of $(1,1,\tfrac{1}{m^2-1})$, and as long as it remains
 so, $\Omega$ and then $V$, $A$ and $C$ successively defined by \eqref{eq:cestg}, \eqref{eq:ratioVOmega}, \eqref{eq:valeurA} and
 \eqref{eq:mouche0} are coefficients such that both \eqref{eq:cestf} and \eqref{eq:cestf2} are satisfied and
moreover \eqref{eq:coco1} holds. Since we wish to be able to reach solutions of the form $(\alpha-\beta)^2\simeq
m^{-3}$ for $m$ large, we need slightly more than just a continuity argument.  For that purpose, we restrict our
attention to the one-parameter family 
$$
\alpha := \alpha_{\epsilon,m} \equiv 1,\qquad \beta:=\beta_{\epsilon,m} \equiv 1-\epsilon,\qquad \delta :=
\delta_{\epsilon,m}\equiv \frac{1}{m^2-1}, 
$$ 
where $\epsilon>0$. We compute after elementary algebra 
\begin{equation}\label{eq:onepara}\begin{split}
g(\alpha_{\epsilon,m},\beta_{\epsilon,m},\delta_{\epsilon,m}) &= \frac{4}{m^2-1} + \frac{\epsilon^2}{4} -
\frac{\epsilon}{m^2-1}\\
&+ \frac{1}{4(m^2-1)^2}\left[ \left(
\frac{\pi - \sqrt{1-\epsilon}\Pi(\epsilon,\epsilon\sqrt{m^2/(m^2-1)})}{K(\epsilon\sqrt{m^2/(m^2-1)})}\right)^2 -1\right]^2\\
&+\frac{1}{2}(2-\epsilon)\frac{1}{m^2-1}\left[ \left( \frac{\pi -
\sqrt{1-\epsilon}\Pi(\epsilon,\epsilon\sqrt{m^2/(m^2-1)})}{K(\epsilon\sqrt{m^2/(m^2-1)})}\right)^2 -1  \right]\\
&+ \frac{2}{m^2-1}\left[\frac{\pi -
\sqrt{1-\epsilon}\Pi(\epsilon,\epsilon\sqrt{m^2/(m^2-1)})}{K(\epsilon\sqrt{m^2/(m^2-1)})}\sqrt{1-\epsilon}
-1\right].
\end{split}
\end{equation}
Each of the squared brackets in the previous expression vanishes for $\epsilon = 0$, and therefore we may estimate
them as $O(\epsilon)$ as $\epsilon \to 0,$ independently of $m\geq 2$ (indeed $m^2/(m^2-1)$ remains bounded as
$m\to +\infty$ and $\Pi$ and $K$ are continuously differentiable functions up to $(0,0)$ and $0$ respectively). Hence,
$$
\Big|g(\alpha_{\epsilon,m},\beta_{\epsilon,m},\delta_{\epsilon,m})- \frac{4}{m^2-1}\Big| = O(\epsilon^2) +
O\Big(\frac{\eps}{m^2}\Big),\qquad \text{as } \epsilon\to 0 \text{ and } m\to +\infty.
$$ 
In particular, if
\begin{equation}\label{eq:validrange}
\epsilon \equiv \epsilon(m) = o\Big(\frac{1}{m}\Big) \qquad\text{as } m\to +\infty, 
\end{equation}
then setting $\alpha_m := \alpha_{\epsilon(m),m},$ $\beta_m := \beta_{\epsilon(m),m},$ $\delta_m :=
\delta_{\epsilon(m),m}$, we have 
$$
g_m:= g(\alpha_m,\beta_m,\delta_m) = \frac{1}{m^2-1}(4+o(1)) \qquad
\text{as } m\to +\infty,
$$
which yields
$$
\Omega_m = \sqrt{m^2-1} (1+o(1)) \qquad
\text{as } m\to +\infty,
$$
and therefore from \eqref{eq:TR}
$$
\ell_m = mT_R = 2\pi + o(1) \qquad
\text{as } m\to +\infty.
$$
We are now in position to complete the proof of Theorem \ref{thm:illposed}. For $\tilde \sigma_0>0$ fixed and for
$m\geq 2$, we set $\epsilon(m):=\tilde \sigma_0 m^{-\frac32}.$ Provided $m$ is sufficiently large, $g_m \neq 0$ and 
the above construction yields an initial data $\gamma_m$ for \eqref{eq:strongbfbis} such that $\partial_s\gamma_m$ 
is periodic with period $2\pi+o(1)$ and such that the corresponding solution of \eqref{eq:strongbfbis} 
is a rigid motion consisting of a parallel translation at speed $C_m$ along $e_3$ and a rotation of angular speed
$\Omega_m$ around $e_3.$ By an homothety centered at zero and of factor $1+o(1)$, we transform $\gamma_m$ to 
$\tilde \gamma_m$ such that $\partial_s\tilde \gamma_m$ 
becomes exactly $2\pi$-periodic. In view of \eqref{eq:vitessedecomp} and the fact that $\partial_s\tilde \gamma_m =
e_3\times \tilde\gamma_m + o(1)$ in our example, we are particularly interested in the asymptotic of $\Omega_m -
C_m$, which will yield $\sigma_0$ in the statement of Theorem \ref{thm:illposed}. Using \eqref{eq:mouche0}, we write
$$
\Omega_m - C_m = \Omega_m - \frac{1}{2}A_m\frac{V_m}{\Omega_m}\Omega_m^2 - \frac{\Omega_m^2}{2}(\alpha_m\beta_m\delta_m)^\frac12.
$$  
We expand using \eqref{eq:ratioVOmega}, \eqref{eq:valeurA}, \eqref{eq:onepara} and \eqref{eq:cestg},
$$
A_m = 1 - \frac{1}{2}\tilde\sigma_0m^{-\frac32} + O(m^{-\frac52}),\qquad \frac{V_m}{\Omega_m}=\frac{1}{m} +
O(m^{-\frac32}),\qquad \Omega_m^2 = m^2 - \frac{\tilde\sigma_0^2}{16}m + O(1),
$$
$$
\Omega_m = m -\frac{\tilde\sigma_0^2}{32} + O(m^{-1}), \qquad\text{and}\qquad (\alpha_m\beta_m\delta_m)^\frac12 = 1/m +
O(m^{-5/2}),
$$ 
as $m\to +\infty,$ so that finally
$$
\Omega_m - C_m = \frac{\tilde\sigma_0^2}{4} + o(1) \qquad \text{ as } m\to +\infty. 
$$
Therefore, for $\sigma_0:={\tilde\sigma_0}^2/4$ the statement of Theorem \ref{thm:illposed} follows defining
 $u_{m,\sigma_0} := \partial_s\tilde\gamma_{m,\tilde\sigma_0}.$ For negative $\sigma_0$, 
it suffices to use the $\Omega \mapsto -\Omega$, $C\mapsto -C$ symmetry, and for the case $\sigma_0 = 0$  
it suffices to consider the trivial example $u_{m,0}:=u^*.$ \qed

\appendix

\numberwithin{lem}{section}
\numberwithin{cor}{section}
\numberwithin{prop}{section}
\numberwithin{equation}{section}

\section{On the flow map for initial data in $H^3$}\label{sect:cauchy}

In this Appendix, we briefly recall some properties of the flow map of \eqref{eq:schrodimap} for initial
data in $H^3(T^1_\ell,S^2)$ for some $\ell>0.$ Most if not all of them are presumably well-known, and we certainly do not claim originality nor 
optimality here. Nevertheless, since for some of them we couldn't always find the arguments which we wanted in the 
literature, we included sketch of proofs at the end of the section for the sake of completeness. 
We state them first for solutions that are a priori infinitely smooth; then the stability estimate of Corollary 
\ref{cor:controldiff} allows to extend some of then to solutions with initial data in $H^3.$ 

\smallskip
 
We denote by $C$, followed possibly by some arguments between round parentheses, a numerical constant whose actual value may
 change from line to line, but to which a (sometimes complicated) explicit value could be attributed a posteriori, depending only 
on its arguments.

\begin{lem}\label{lem:conserved}
For a smooth solution $u$ of \eqref{eq:schrodimap}, we have the following conserved quantities\footnote{As
mentioned in the Introduction, there are actually an infinite number of independent conserved quantities. The
additional ones, involving higher and higher order derivatives of the solution, seem however to lack some form 
of coerciveness in order to control these derivatives.}   
$$
E(u) := \int_{T^1} |\partial_s u|^2\, ds
\quad\text{and}\quad
I(u) := \int_{T^1} \big|\partial_{t} u\big|^2 + \big|\partial_{ss} u\big|^2 -
\frac{3}{4}\big|\partial_s u\big|^4\, ds.
$$
\end{lem}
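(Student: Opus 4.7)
The unifying starting observation is that the Schr\"odinger map equation can be recast pointwise as
\[
\partial_t u = u \times \partial_{ss} u,
\]
since $\partial_s(u \times \partial_s u) = \partial_s u \times \partial_s u + u \times \partial_{ss} u = u \times \partial_{ss} u$. In particular $\partial_t u$ is orthogonal to both $u$ and $\partial_{ss} u$, and (differentiating $|u|^2 \equiv 1$) we have $u \cdot \partial_s u = 0$ and $u \cdot \partial_{ss} u = -|\partial_s u|^2$.

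For the conservation of $E$, I would differentiate under the integral, integrate by parts once in $s$ using periodicity, and invoke the orthogonality $\partial_t u \perp \partial_{ss} u$:
\[
\tfrac{d}{dt} E(u) = 2 \int_{T^1} \partial_s u \cdot \partial_{st} u \, ds = -2 \int_{T^1} \partial_{ss} u \cdot \partial_t u \, ds = -2 \int_{T^1} \partial_{ss} u \cdot (u \times \partial_{ss} u) \, ds = 0.
\]

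For the conservation of $I$, the first step is to exploit the pointwise identity
\[
|\partial_t u|^2 = |u \times \partial_{ss} u|^2 = |\partial_{ss} u|^2 - (u \cdot \partial_{ss} u)^2 = |\partial_{ss} u|^2 - |\partial_s u|^4,
\]
which rewrites the integrand of $I$ as a polynomial in $|\partial_{ss} u|^2$ and $|\partial_s u|^4$, so the task reduces to checking the conservation of the resulting linear combination of $\int |\partial_{ss} u|^2 \, ds$ and $\int |\partial_s u|^4 \, ds$. I would carry out this check by differentiating $\partial_t u = u \times \partial_{ss} u$ in $s$ to obtain $\partial_{st} u = \partial_s u \times \partial_{ss} u + u \times \partial_{sss} u$, integrating by parts, and repeatedly using the cyclic property $a \cdot (b \times c) = b \cdot (c \times a) = c \cdot (a \times b)$ together with the orthogonality relations above and the further relation $u \cdot \partial_{sss} u = -3\,\partial_s u \cdot \partial_{ss} u$ obtained by differentiating $u \cdot \partial_{ss} u = -|\partial_s u|^2$ in $s$.

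A conceptually cleaner alternative, which I would present as a verification, is to introduce the moving orthonormal frame $(u, T, N) := (u, \partial_s u / \kappa, u \times \partial_s u / \kappa)$ with $\kappa := |\partial_s u|$, derive the Frenet-like formulas $\partial_s T = -\kappa u + \tau N$ and $\partial_s N = -\tau T$ for some geodesic torsion $\tau$, and so express
\[
\partial_{ss} u = -\kappa^2 u + \kappa' T + \kappa\tau N, \qquad \partial_t u = -\kappa\tau T + \kappa' N.
\]
The integrand of $I$ then becomes a polynomial in $\kappa$, $\kappa'$ and $\tau$, and the conservation law reduces to a standard conserved quantity for the cubic nonlinear Schr\"odinger equation satisfied by the Hasimoto variable $\psi = \kappa \exp(i\int \tau)$ recalled in Subsection \ref{sub:trium}, using $|\psi|^2 = \kappa^2$ and $|\psi'|^2 = (\kappa')^2 + \kappa^2\tau^2$. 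The main obstacle in the direct route is the bookkeeping of the several scalar triple products of $u$ and its first three derivatives produced by the integrations by parts, and the verification that they cancel at the correct numerical coefficient; the frame-based route sidesteps this at the cost of invoking the Hasimoto correspondence.
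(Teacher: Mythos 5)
Your argument for the conservation of $E$ is correct, and it is exactly what the paper means by ``direct differentiation.'' The pointwise identity $|\partial_t u|^2 = |u\times\partial_{ss}u|^2 = |\partial_{ss}u|^2 - |\partial_s u|^4$ is also right, and it reduces the integrand of $I$ to $2|\partial_{ss}u|^2 - \tfrac74|\partial_s u|^4$. For $I$, however, you only \emph{describe} the verification (either by triple-product bookkeeping or via the Hasimoto frame) and never carry it out; this matters, because the statement does not survive the check. Pairing \eqref{eq:degre2} with $\partial_t u$ in $L^2(T^1)$ and integrating by parts, using $u\cdot\partial_t u = 0$, $\partial_{ss}u\cdot\partial_t u = (u\times\partial_{ss}u)\cdot\partial_{ss}u = 0$, and hence $u\cdot\partial_{sst}u = -2\,\partial_s u\cdot\partial_{st}u = -\partial_t|\partial_s u|^2$, one finds
\begin{equation*}
\frac{d}{dt}\int_{T^1}\Bigl(\tfrac12|\partial_t u|^2 + \tfrac12|\partial_{ss}u|^2\Bigr)\,ds
= \tfrac34\,\frac{d}{dt}\int_{T^1}|\partial_s u|^4\,ds,
\end{equation*}
so what is conserved is $\int_{T^1}\tfrac12|\partial_t u|^2+\tfrac12|\partial_{ss}u|^2-\tfrac34|\partial_s u|^4\,ds$, i.e.\ $\int_{T^1}|\partial_{ss}u|^2-\tfrac54|\partial_s u|^4\,ds$, whereas the displayed $I$ equals $\int_{T^1} 2|\partial_{ss}u|^2-\tfrac74|\partial_s u|^4\,ds$; the two differ by $\tfrac34\int_{T^1}|\partial_s u|^4\,ds$, which is not itself conserved. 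Your Hasimoto route reaches the same conclusion, since the NLS energy is $\int|\Psi_s|^2-\tfrac14|\Psi|^4$, again $\int|\partial_{ss}u|^2-\tfrac54|\partial_s u|^4$ in the $u$ variables. In short, the plan is sound but unexecuted, and executing it is precisely what would have caught the missing factor $\tfrac12$ (equivalently, the $\tfrac34$ that should read $\tfrac32$) in the stated $I$; a proof proposal that stops at ``I would carry out the check'' cannot be accepted as establishing a coefficient-sensitive identity of this kind.
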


\begin{cor}\label{cor:controlH2}
For a smooth solution $u$ of \eqref{eq:schrodimap} with initial datum $u^0$ and for every $t\in \R,$
$$
\big\| \partial_{ss}u(t,\cdot) \big\|_{2}^2 \leq   4 \big\| \partial_{ss}u^0\big\|_{2}^2 + 
2 \big\| \partial_s u^0\big\|_{2}^6.
$$
\end{cor}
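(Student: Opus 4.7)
The plan is to exploit the sphere constraint $|u|=1$ to rewrite the conserved quantity $I(u)$ of Lemma \ref{lem:conserved} in a form that directly involves $\|\partial_{ss}u\|_2^2$, and then to extract the bound by absorption into the left-hand side.

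First I would use that $\partial_t u = u\times\partial_{ss}u$ (which follows from \eqref{eq:schrodimap} together with $u\cdot\partial_s u = 0$), combined with the pointwise identities $u\cdot\partial_s u = 0$ and $u\cdot\partial_{ss}u = -|\partial_s u|^2$ obtained by differentiating $|u|^2\equiv 1$ once and twice, to get
$$
|\partial_t u|^2 \; = \; |u|^2|\partial_{ss}u|^2 - (u\cdot\partial_{ss}u)^2 \; = \; |\partial_{ss}u|^2 - |\partial_s u|^4.
$$
Plugging this into the definition of $I$ in Lemma \ref{lem:conserved} collapses it to the symmetric form
$$
I(u) \; = \; 2\,\|\partial_{ss}u\|_2^2 - \tfrac{7}{4}\int_{T^1}|\partial_s u|^4 \, ds,
$$
and the conservation of $I$ together with the crude bound $I(u^0)\leq 2\|\partial_{ss}u^0\|_2^2$ yields
$$
\|\partial_{ss}u(t,\cdot)\|_2^2 \; = \; \tfrac{1}{2}I(u^0) + \tfrac{7}{8}\int_{T^1}|\partial_s u(t,\cdot)|^4 \, ds \; \leq \; \|\partial_{ss}u^0\|_2^2 + \tfrac{7}{8}\int_{T^1}|\partial_s u(t,\cdot)|^4 \, ds.
$$

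Next I would bound the $L^4$-norm of $\partial_s u(t,\cdot)$ by $\|\partial_s u^0\|_2$ and a controlled fraction of $\|\partial_{ss}u(t,\cdot)\|_2$. Since $u$ is periodic on $T^1$, $\partial_s u$ has zero mean, so the Gagliardo-Nirenberg inequality on the circle gives a numerical constant $C_0$ with
$$
\int_{T^1}|\partial_s u(t,\cdot)|^4 \, ds \; \leq \; C_0 \, \|\partial_s u(t,\cdot)\|_2^3 \, \|\partial_{ss}u(t,\cdot)\|_2 \; = \; C_0 \, \|\partial_s u^0\|_2^3 \, \|\partial_{ss}u(t,\cdot)\|_2,
$$
where the last equality uses conservation of energy $E(u)$. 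A Young inequality of the form $ab \leq \tfrac12 a^2 + \tfrac12 b^2$ then absorbs $\|\partial_{ss}u(t,\cdot)\|_2$ into the left-hand side with coefficient at most $1/2$, producing
$$
\|\partial_{ss}u(t,\cdot)\|_2^2 \; \leq \; 2\|\partial_{ss}u^0\|_2^2 + C_1\|\partial_s u^0\|_2^6
$$
for a numerical constant $C_1$ depending only on $C_0$.

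The main obstacle is a purely quantitative one: tuning the Gagliardo-Nirenberg constant $C_0$ and the splitting in Young's inequality to recover the stated coefficients $4$ and $2$ exactly. A conceptually cleaner but weaker alternative would be to use only the pointwise identity $|\partial_s u|^2 = -u\cdot\partial_{ss}u$, which by Cauchy-Schwarz gives $\|\partial_s u\|_4^4 \leq \|\partial_{ss}u\|_2^2$ and hence $\|\partial_{ss}u(t,\cdot)\|_2^2 \leq 8\|\partial_{ss}u^0\|_2^2$ without any $\|\partial_s u^0\|_2$ term; this avoids Gagliardo-Nirenberg but is wasteful in constants. The route above, with some care in the constants, is what produces a bound of the declared shape.
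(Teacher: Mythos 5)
Your argument is mathematically correct and takes a genuinely different route from the paper's, but it cannot actually recover the stated coefficients, and the final remark that this is ``a purely quantitative obstacle'' fixable by tuning is too optimistic.

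The difference with the paper lies in a single step. You use the sphere constraint to evaluate $|\partial_t u|^2$ exactly, obtaining $|\partial_t u|^2 = |\partial_{ss}u|^2 - |\partial_s u|^4$ and hence the clean identity $I(u) = 2\|\partial_{ss}u\|_2^2 - \tfrac{7}{4}\int_{T^1}|\partial_s u|^4$. The paper instead just throws away the nonnegative term $\|\partial_t u\|_2^2 \geq 0$ from $I$, obtaining the one-sided inequality $\|\partial_{ss}u\|_2^2 \leq I(u) + \tfrac{3}{4}\int_{T^1}|\partial_s u|^4$. Your version is more structural, and it even gives a sharper coefficient in front of $\|\partial_{ss}u^0\|_2^2$ ($2$ rather than $4$). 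But it pays for this by inflating the prefactor of the $L^4$ term from $\tfrac34$ to $\tfrac74$, and this is precisely what kills the other constant. Running Gagliardo--Nirenberg with the constant $\sqrt{3}$ (as in the paper, via Agueh) and then Young with an absorption parameter $\lambda\in(0,2)$, your route yields
$$
\big\|\partial_{ss}u(t,\cdot)\big\|_2^2 \;\leq\; \frac{1}{1-\lambda/2}\,\big\|\partial_{ss}u^0\big\|_2^2 \;+\; \frac{1}{1-\lambda/2}\cdot\frac{1}{2\lambda}\cdot\frac{147}{64}\,\big\|\partial_s u^0\big\|_2^6,
$$
and the coefficient of $\|\partial_s u^0\|_2^6$ is minimized at $\lambda=1$, giving $\tfrac{147}{64}\approx 2.30 > 2$. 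No choice of $\lambda$ makes it $\leq 2$. The paper's cruder inequality, where the $L^4$ prefactor is $\tfrac34$, yields $\tfrac{27}{16}\approx 1.69 < 2$ at $\lambda=1$ and this is what produces the stated bound. So the two bounds, yours with $(2,\tfrac{147}{64})$ and the paper's with $(4,\tfrac{27}{16})$, are incomparable, and neither implies the other; yours does not imply the statement as written.

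Two smaller remarks. First, your crude bound $I(u^0)\leq 2\|\partial_{ss}u^0\|_2^2$ is correct, and indeed is most transparently seen from your own collapsed identity $I(u^0)=2\|\partial_{ss}u^0\|_2^2-\tfrac74\int|\partial_s u^0|^4$; the paper proves it separately. Second, your alternative at the end, $\|\partial_s u\|_4^4 = \int (u\cdot\partial_{ss}u)^2 \leq \|\partial_{ss}u\|_2^2$ and hence $\|\partial_{ss}u(t,\cdot)\|_2^2 \leq 8\|\partial_{ss}u^0\|_2^2$, is correct and is a nice observation that requires no Gagliardo--Nirenberg at all, but it is not of the stated form either. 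Since the precise numerical constants in this corollary are not used later in the paper, any bound of this shape would serve the paper's purposes; nevertheless, if the goal is the literal statement, you should drop $\|\partial_t u\|_2^2$ rather than evaluate it.
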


\begin{lem}\label{lem:controlH3}
For a smooth solution $u$ of \eqref{eq:schrodimap} and for every $t\in \R,$
$$
\frac{d}{dt} \left(\|\partial_{sss}u\|^2_2 + \|\partial_{ts}u\|_2^2\right) \leq C(\ell) \|\partial_{ss} u\|_2^2
\left(\|\partial_{sss}u\|^2_2 + \|\partial_{ts}u\|_2^2\right). 
$$
\end{lem}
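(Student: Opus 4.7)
Set $P := \partial_{ts}u$ and $E_3 := \|\partial_{sss}u\|_2^2 + \|P\|_2^2$. The plan is to estimate $\tfrac{d}{dt}\|P\|_2^2$ and $\tfrac{d}{dt}\|\partial_{sss}u\|_2^2$ separately, and to bound each by $C(\ell)\|\partial_{ss}u\|_2^2\,E_3$. Two structural facts will drive the argument. Differentiating \eqref{eq:schrodimap} once in $s$ (using $\partial_t u = u\times\partial_{ss}u$) yields
$$
P \;=\; \partial_s u\times\partial_{ss}u \;+\; u\times\partial_{sss}u,
$$
so $P$ encodes $u\times\partial_{sss}u$ up to a quantity involving only first and second derivatives; differentiating $|u|^2\equiv 1$ repeatedly yields the algebraic identities $u\cdot\partial_s u=0$, $u\cdot\partial_{ss}u=-|\partial_s u|^2$, $u\cdot\partial_{sss}u=-3\,\partial_s u\cdot\partial_{ss}u$ and $u\cdot\partial_{ssss}u=-4\,\partial_s u\cdot\partial_{sss}u-3|\partial_{ss}u|^2$. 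Moreover, on $T^1_\ell$ each derivative $\partial_s^k u$ with $k\ge 1$ has vanishing mean, so Poincaré--Sobolev yields
$$
\|\partial_s u\|_\infty \le C(\ell)\,\|\partial_{ss}u\|_2,\qquad \|\partial_{ss}u\|_\infty \le C(\ell)\,\|\partial_{sss}u\|_2;
$$
these two bounds are what will produce the quadratic $\|\partial_{ss}u\|_2^2$ prefactor on the right-hand side.

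For $\tfrac12\tfrac{d}{dt}\|P\|_2^2 = \int P\cdot\partial_t P\,ds$, we expand $\partial_t P$ using $\partial_t u=u\times\partial_{ss}u$ and the identifications $\partial_{sst}u=\partial_s P$, $\partial_{ssst}u=\partial_{ss}P$ into four summands, one of which vanishes by antisymmetry. Exactly one integration by parts on the Schr\"odinger-type summand $\int P\cdot(u\times\partial_{ss}P)\,ds$, via $u\times\partial_{ss}P=\partial_s(u\times\partial_s P)-\partial_s u\times\partial_s P$, produces $-\int P\cdot(\partial_s u\times\partial_s P)\,ds$, exactly cancelling the convective summand. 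What is left is
$$
\tfrac12\tfrac{d}{dt}\|P\|_2^2 \;=\; \int P\cdot\bigl((u\times\partial_{ss}u)\times\partial_{sss}u\bigr)\,ds,
$$
and the triple product identity combined with the sphere identities above reduces the integrand to a linear combination of $(\partial_s u\cdot\partial_{ss}u)(P\cdot\partial_{ss}u)$ and $(\partial_{ss}u\cdot\partial_{sss}u)(P\cdot u)$. Using the pointwise bounds $|P\cdot u|\le|\partial_s u||\partial_{ss}u|$ and $|P\cdot\partial_{ss}u|\le|\partial_{ss}u||\partial_{sss}u|$, which follow from the explicit form of $P$, together with the Poincaré inequalities above, each integral is controlled by $C(\ell)\|\partial_{ss}u\|_2^2\|\partial_{sss}u\|_2^2\le C(\ell)\|\partial_{ss}u\|_2^2\,E_3$.

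The computation of $\tfrac12\tfrac{d}{dt}\|\partial_{sss}u\|_2^2$ is where the main obstacle arises. Expansion of $\partial_{ssst}u$ followed by the identity $v\times v''=\partial_s(v\times v')$ applied with $v=\partial_{sss}u$ (to integrate by parts the $u\times\partial_{sssss}u$ contribution) leaves
$$
\tfrac12\tfrac{d}{dt}\|\partial_{sss}u\|_2^2 \;=\; -2\int\partial_s u\cdot(\partial_{sss}u\times\partial_{ssss}u)\,ds,
$$
whose integrand still involves $\partial_{ssss}u$ and therefore cannot be controlled by $H^3$ quantities directly. The plan is to trade $\partial_{ssss}u$ for $\partial_s P$ plus lower-order terms: differentiating the formula for $P$ once in $s$ gives $u\times\partial_{ssss}u = \partial_s P - 2\,\partial_s u\times\partial_{sss}u$, and combining with the sphere identity for $u\cdot\partial_{ssss}u$ yields the closed expression
$$
\partial_{ssss}u \;=\; (u\cdot\partial_{ssss}u)\,u \;-\; u\times\partial_s P \;-\; 6(\partial_s u\cdot\partial_{ss}u)\,\partial_s u.
$$
Substituting this back and expanding via the triple product, the contribution from $u\times(\partial_s u\times\partial_{sss}u)$ collapses to a multiple of $\partial_s u$ (which is annihilated by the outer cross product with $\partial_s u$), and the $(u\cdot\partial_{ssss}u)u$ piece yields a purely cubic expression; only one ``dangerous'' integral remains, proportional to $\int(\partial_s u\cdot\partial_{ss}u)(\partial_s u\cdot\partial_s P)\,ds$. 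A final integration by parts transfers the last $\partial_s$ off $P$ onto $(\partial_s u\cdot\partial_{ss}u)\,\partial_s u$, producing only factors of order at most three multiplied by $P$.

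It then remains to estimate a finite list of cubic and quartic integrands, each a product of factors chosen among $\partial_s u$, $\partial_{ss}u$, $\partial_{sss}u$, $P$ and $u$, with at most two factors of differential order three. Placing in $L^\infty$ the first- and second-order derivatives via the Poincaré inequalities displayed in the first paragraph, and the remaining factors in $L^2$, each integrand is bounded by a constant multiple of $\|\partial_{ss}u\|_2^2\,\|\partial_{sss}u\|_2\,\|P\|_2$ or $\|\partial_{ss}u\|_2^2\,\|\partial_{sss}u\|_2^2$, and hence by $C(\ell)\|\partial_{ss}u\|_2^2\,E_3$ after Young's inequality. Summing the two estimates for $\tfrac{d}{dt}\|P\|_2^2$ and $\tfrac{d}{dt}\|\partial_{sss}u\|_2^2$ yields the announced differential inequality.
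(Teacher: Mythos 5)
Your proof is correct but follows a genuinely different path from the paper's. The paper first derives the second-order wave-type reformulation $\partial_{tt}u + \partial_{ssss}u = -2\partial_{ss}(|\partial_s u|^2 u) + \partial_s(|\partial_s u|^2\partial_s u)$ by differentiating \eqref{eq:schrodimap} in time, then differentiates once more in $s$, tests against $\partial_{ts}u$, and reads off the evolution of the coupled wave energy $\tfrac12\int(|\partial_{ts}u|^2 + |\partial_{sss}u|^2)$ in a single step: the top-order terms cancel automatically thanks to the wave structure, and the only term needing special treatment is $D_1 = -2\int\partial_{sss}(|\partial_s u|^2)\,u\,\partial_{ts}u$, which is handled by one integration by parts together with $u\cdot\partial_t u=0$. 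You instead stay at the first-order Schr\"odinger level and estimate $\tfrac{d}{dt}\|\partial_{ts}u\|_2^2$ and $\tfrac{d}{dt}\|\partial_{sss}u\|_2^2$ separately, manually reconstructing the coupling between the two: the problematic $\partial_{ssss}u$ that appears in the latter is traded for $\partial_s P$ via $u\times\partial_{ssss}u = \partial_s P - 2\,\partial_s u\times\partial_{sss}u$ and the constraint identities $u\cdot\partial_s^k u = \cdots$, after which a final integration by parts moves the residual $s$-derivative off $P$. Both arguments are sound and lean on the same Poincar\'e--Sobolev bounds to place the first- and second-order derivatives in $L^\infty$. The paper's route is shorter once the wave formulation is in hand; yours never invokes the second-order equation and makes explicit, step by step, which cancellations make $\|\partial_{ts}u\|_2^2 + \|\partial_{sss}u\|_2^2$ the right coupled quantity to close the Gronwall estimate.
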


\begin{cor}\label{cor:controlH3}
For a smooth solution $u$ of \eqref{eq:schrodimap} with initial data $u^0$, for every $T>0$ and for every $t\in [-T,T]$
\begin{equation*}
\begin{split}
\|\partial_{sss} u(t,\cdot)\|_{2}^2 &\leq C(\ell)\Big( \|\partial_{ss}u^0\|_2^4 + \|\partial_{sss} u^0\|_{2}^2 \Big) \exp\Big(
C(\ell)\Big[\|\partial_{s}u^0\|_2^6 + \|\partial_{ss} u^0\|_{2}^2\Big]|t|\Big)\\
&\leq C(\ell, \|\partial_{sss} u^0\|_{2},T).
\end{split}
\end{equation*}
\end{cor}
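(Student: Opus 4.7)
The plan is to combine Lemma \ref{lem:controlH3} with Corollary \ref{cor:controlH2} through a direct Gronwall argument, and then control the initial value of the auxiliary quantity using the equation itself.

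Set $Q(t):=\|\partial_{sss}u(t,\cdot)\|_2^2+\|\partial_{ts}u(t,\cdot)\|_2^2$. By Lemma \ref{lem:controlH3}, $Q$ is Lipschitz on bounded intervals and satisfies
$$Q'(t)\ \leq\ C(\ell)\,\|\partial_{ss}u(t,\cdot)\|_2^2\,Q(t)$$
almost everywhere. Corollary \ref{cor:controlH2} bounds the coefficient in front of $Q(t)$ by a time-independent quantity, namely
$$C(\ell)\,\|\partial_{ss}u(t,\cdot)\|_2^2\ \leq\ C(\ell)\bigl(\|\partial_{ss}u^0\|_2^2+\|\partial_s u^0\|_2^6\bigr).$$
The classical Gronwall inequality then yields, for every $t\in\R$,
$$Q(t)\ \leq\ Q(0)\,\exp\!\Bigl(C(\ell)\bigl[\|\partial_{ss}u^0\|_2^2+\|\partial_s u^0\|_2^6\bigr]|t|\Bigr),$$
which has exactly the exponential factor claimed in the statement.

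It remains to estimate $Q(0)$ in terms of the initial data in the correct form. For that I would use the equation at time zero: since $|u|=1$ implies $u\cdot\partial_s u=0$ and hence $\partial_s u\times\partial_s u=0$, we can rewrite $\partial_t u=\partial_s(u\times\partial_s u)=u\times\partial_{ss}u$. Differentiating in $s$ gives
$$\partial_{ts}u\ =\ \partial_s u\times\partial_{ss}u+u\times\partial_{sss}u,$$
so that pointwise $|\partial_{ts}u|^2\leq 2|\partial_s u|^2|\partial_{ss}u|^2+2|\partial_{sss}u|^2$ and therefore
$$\|\partial_{ts}u(0,\cdot)\|_2^2\ \leq\ 2\,\|\partial_s u^0\|_\infty^2\,\|\partial_{ss}u^0\|_2^2+2\,\|\partial_{sss}u^0\|_2^2.$$
Combining Sobolev's embedding $H^1(T^1_\ell)\hookrightarrow L^\infty(T^1_\ell)$ with the fact that $\partial_s u^0$ has zero mean on $T^1_\ell$ (hence a Poincaré inequality gives $\|\partial_s u^0\|_2\leq C(\ell)\|\partial_{ss}u^0\|_2$), one obtains $\|\partial_s u^0\|_\infty^2\leq C(\ell)\|\partial_{ss}u^0\|_2^2$, and consequently
$$Q(0)\ \leq\ C(\ell)\bigl(\|\partial_{ss}u^0\|_2^4+\|\partial_{sss}u^0\|_2^2\bigr),$$
which provides the prefactor required by the first line of the statement.

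For the second, qualitative inequality it suffices to invoke Poincaré once more: since $\partial_s u^0$ and $\partial_{ss}u^0$ both have zero mean on $T^1_\ell$, the quantities $\|\partial_{ss}u^0\|_2$, $\|\partial_s u^0\|_2$, and hence $\|\partial_{ss}u^0\|_2^4$, $\|\partial_s u^0\|_2^6$, are all dominated by a function of $\ell$ and $\|\partial_{sss}u^0\|_2$ alone, so the exponential factor is bounded by $C(\ell,\|\partial_{sss}u^0\|_2,T)$ uniformly on $[-T,T]$. The argument is essentially routine once Lemma \ref{lem:controlH3} is in hand; the only point requiring some care is the bookkeeping in Step 4 to land exactly on the prefactor $\|\partial_{ss}u^0\|_2^4+\|\partial_{sss}u^0\|_2^2$ rather than on a mixed expression involving $\|\partial_s u^0\|_2$, which is what forces the use of the zero-mean Poincaré estimate rather than a cruder Sobolev bound.
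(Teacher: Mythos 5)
Your argument is correct and coincides with the paper's: both apply Gronwall to the quantity $\|\partial_{sss}u\|_2^2+\|\partial_{ts}u\|_2^2$ using Lemma \ref{lem:controlH3} together with the time-uniform bound on $\|\partial_{ss}u\|_2^2$ from Corollary \ref{cor:controlH2}, and both then estimate the initial value by writing $\partial_{ts}u(0,\cdot)=\partial_s u^0\times\partial_{ss}u^0+u^0\times\partial_{sss}u^0$ and invoking Sobolev (plus the zero-mean Poincar\'e inequality). One tiny quibble: the identity $\partial_s u\times\partial_s u=0$ holds because any vector crossed with itself vanishes, not as a consequence of $u\cdot\partial_s u=0$; the conclusion is of course unaffected.
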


\begin{lem}\label{lem:difference}
Let $u\,,$ $\check u$ be two smooth solutions of \eqref{eq:schrodimap}. Set $w:=u-\check u.$ 
Then for every  $t\in \R$,
\begin{align*}
\frac{d}{dt} \big\|w\big\|_{2}^2 &\leq  C\big(\ell, \|\partial_{sss}u\|_2, \|\partial_{sss}\check u\|_2\big)
\Big(\big\|w\big\|_{2}^2 + \big\|\partial_s
w\big\|_{2}^2 \Big),\\
\frac{d}{dt} \big\|\partial_s w\big\|_{2}^2 &\leq  C\big(\ell, \|\partial_{sss}u\|_2, \|\partial_{sss}\check u\|_2\big)
 \Big(\big\|w \big\|_{2}^2 +  \big\|\partial_s
w\big\|_{2}^2 \Big),\\
\frac{d}{dt} \Big( \big\|\partial_{ss}w\big\|_{2}^2 + \big\|\partial_t w\big\|_2^2\Big)
&\leq C\big(\ell, \|\partial_{sss}u\|_2, \|\partial_{sss}\check u\|_2\big) \Big(
\big\|\partial_{ss}w \big\|_{2}^2 + \big\|\partial_t w\big\|_2^2\Big).
\end{align*}
\end{lem}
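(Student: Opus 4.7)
The plan is to run three energy estimates on $w = u - \check u$; the first two are direct, while the third requires a joint treatment of $\|\partial_{ss} w\|_2^2$ and $\|\partial_t w\|_2^2$ through an ``inversion'' of the equation.

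As a starting point I rewrite \eqref{eq:schrodimap} in the form $\partial_t u = u \times \partial_{ss} u$, valid since $\partial_s u \times \partial_s u = 0$. Subtracting the same identity for $\check u$ yields the bilinear equation
\begin{equation}\label{eq:wequation}
\partial_t w = w \times \partial_{ss} u + \check u \times \partial_{ss} w.
\end{equation}
All three estimates exploit the pointwise identity $a\cdot(b\times a)=0$ to eliminate dangerous top-order terms after integration by parts. Throughout, $L^\infty$-norms of $u,\check u,\partial_s u,\partial_s\check u,\partial_{ss}u,\partial_{ss}\check u$ (and their first time derivatives, which by the equation are of the same order) are controlled by $\|\partial_{sss}u\|_2,\|\partial_{sss}\check u\|_2$ and $\ell$ via the Sobolev embedding $H^1(T^1_\ell)\hookrightarrow L^\infty(T^1_\ell)$, Corollary \ref{cor:controlH2}, and Lemma \ref{lem:controlH3}.

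For the $L^2$ estimate I pair \eqref{eq:wequation} with $w$: the term $\int w\cdot(w\times\partial_{ss}u)\,ds$ vanishes pointwise, and one integration by parts on the other term gives
$$
\int w\cdot(\check u\times\partial_{ss}w)\,ds = -\int \partial_s w\cdot(\check u\times\partial_s w)\,ds - \int w\cdot(\partial_s\check u\times\partial_s w)\,ds,
$$
the first integral being zero and the second bounded by $\|\partial_s\check u\|_\infty(\|w\|_2^2+\|\partial_s w\|_2^2)$. For the $H^1$ estimate I differentiate \eqref{eq:wequation} in $s$ and pair with $\partial_s w$; the same cancellation scheme applies, and after one further integration by parts on the $\check u\times\partial_{sss}w$ term every contribution either vanishes or cancels in pairs, except $\int \partial_s w\cdot(w\times\partial_{sss}u)\,ds$, controlled by $\|\partial_{sss}u\|_2\|w\|_\infty\|\partial_s w\|_2$ and hence by $C(\ell,\|\partial_{sss}u\|_2)(\|w\|_2^2+\|\partial_s w\|_2^2)$ after Sobolev and Young.

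The $H^2$ estimate is the delicate step. The central observation is that \eqref{eq:wequation} can be ``inverted'' using $|\check u|=1$: crossing both sides on the left with $\check u$ and using $\check u\times(\check u\times v) = (\check u\cdot v)\check u - v$ together with the identity $\check u\cdot\partial_{ss}w = -w\cdot\partial_{ss}u - \partial_s w\cdot(\partial_s u+\partial_s\check u)$ (a consequence of $|u|^2 = |\check u|^2 = 1$ differentiated twice), one obtains
$$
\partial_{ss}w = -\check u\times\partial_t w + R(w,\partial_s w),
$$
where $R$ is linear in $w,\partial_s w$ with $L^\infty$ coefficients built from $u,\check u$ and their first two spatial derivatives. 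Hence $\|\partial_{ss}w\|_2^2$ and $\|\partial_t w\|_2^2$ are equivalent modulo lower-order quantities bounded by the first two estimates, and it suffices to control $\tfrac{d}{dt}\|\partial_t w\|_2^2$. This I do by differentiating \eqref{eq:wequation} in $t$ and pairing with $\partial_t w$: the $\int\partial_t w\cdot(\partial_t w\times\partial_{ss}u)$ term vanishes; the $\int\partial_t w\cdot(\partial_t\check u\times\partial_{ss}w)$ term is bounded directly; and the two top-order contributions $\int\partial_t w\cdot(\check u\times\partial_{tss}w)$ and $\int\partial_t w\cdot(w\times\partial_{tss}u)$ are treated by integration by parts. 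The $\check u\times\partial_{tss}w$ term reduces to $-\int\partial_t w\cdot(\partial_s\check u\times\partial_{ts}w)$; substituting the expression for $\partial_{ts}w$ from the $s$-derivative of \eqref{eq:wequation} and applying the BAC-CAB identity leaves, as the only potentially problematic piece, a term proportional to $(\partial_t w\cdot\check u)(\partial_s\check u\cdot\partial_{sss}w)$, but the scalar $\partial_t w\cdot\check u = -w\cdot(\check u\times\partial_{ss}u)$ is already of lower order and a further IBP converts $\partial_{sss}w$ into $\partial_{ss}w$. The $w\times\partial_{tss}u$ contribution is similarly tamed by one IBP in $s$ combined with the identity $\partial_{ts}u = \partial_s u\times\partial_{ss}u + u\times\partial_{sss}u$.

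The main obstacle is precisely this $H^2$ estimate: a naive energy estimate on either $\|\partial_{ss}w\|_2^2$ or $\|\partial_t w\|_2^2$ individually produces terms containing $\partial_{ssss}u$ or $\partial_{sss}w$ that fall outside our a priori $H^3$ control, and it is only the sum that admits a Gronwall bound. This joint behaviour mirrors, at the linearized level, the mechanism behind the conservation of the quantity $I(u)$ in Lemma \ref{lem:conserved}, which likewise combines $\|\partial_t u\|_2^2$ and $\|\partial_{ss}u\|_2^2$ into a coercive, a priori controlled functional.
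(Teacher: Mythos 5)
Your $L^2$ and $H^1$ estimates are sound and track the paper's closely; the only cosmetic difference is that you use the one-sided decomposition $\partial_t w = w\times\partial_{ss}u + \check u\times\partial_{ss}w$ while the paper uses the symmetric one $\partial_t w = w\times\partial_{ss}z + z\times\partial_{ss}w$ with $z=(u+\check u)/2$. Both cancellation schemes work at first order, and the surviving term in your $H^1$ computation (namely $\int \partial_s w\cdot(w\times\partial_{sss}u)$) is indeed harmless.

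The $H^2$ step, however, has a genuine gap, and it is instructive to compare it with the paper's route. The paper does \emph{not} differentiate the first-order equation \eqref{eq:diff01} in time and pair with $\partial_t w$; instead it subtracts the second-order equation \eqref{eq:degre2} for $u$ and $\check u$, yielding \eqref{eq:diff02} with left-hand side $\partial_{tt}w+\partial_{ssss}w$. The point is that $\partial_{ssss}u$ and $\partial_{ssss}\check u$, neither of which is controlled for $H^3$ data, combine into the single good term $\partial_{ssss}w$; pairing with $\partial_t w$ and integrating by parts twice then produces $\tfrac12\tfrac{d}{dt}(\|\partial_{ss}w\|_2^2+\|\partial_t w\|_2^2)$ on the nose, so the quantity whose derivative is estimated is exactly the one in the statement. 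Your approach gives up both of these features at once, and neither loss is repaired by the argument you outline.

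First, pairing $\partial_t$ of \eqref{eq:wequation} with $\partial_t w$ only produces $\tfrac{d}{dt}\|\partial_t w\|_2^2$. The subsequent claim that ``it suffices to control $\tfrac{d}{dt}\|\partial_t w\|_2^2$'' because $\|\partial_{ss}w\|_2^2$ and $\|\partial_t w\|_2^2$ are equivalent modulo lower order is a logical gap: two quantities being pointwise comparable in $t$ does not make their time derivatives comparable, and a Gronwall bound on $\|\partial_t w\|_2^2$ alone does not give the stated differential inequality on $\|\partial_{ss}w\|_2^2+\|\partial_t w\|_2^2$. At best the inversion gives an a posteriori bound on $\|\partial_{ss}w\|_2^2$ with an extra multiplicative constant, which is enough for Corollary~\ref{cor:controldiff} but is not what Lemma~\ref{lem:difference} asserts.

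Second, and more seriously, the term $\int\partial_t w\cdot(w\times\partial_{tss}u)$ is not ``tamed by one IBP in $s$.'' Writing $\partial_{tss}u=\partial_s(\partial_{ts}u)$ and integrating by parts produces
$$-\int \partial_{ts}w\cdot(w\times\partial_{ts}u)\,ds - \int\partial_t w\cdot(\partial_s w\times\partial_{ts}u)\,ds.$$
The second integral is benign, but the first contains $\partial_{ts}w=\check u\times\partial_{sss}w+\text{(lower order)}$, i.e. a full $\partial_{sss}w$ against the $L^\infty$ factor $w$ and the $L^2$ factor $\partial_{ts}u$; there is no small parameter in front that Gronwall can use, and further integrations by parts either reproduce $\partial_{sss}w$ or reintroduce $\partial_{tss}u\sim u\times\partial_{ssss}u$, which is not in $L^2$ for $u\in H^3$. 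The cancellation $\partial_{ssss}u-\partial_{ssss}\check u=\partial_{ssss}w$ is precisely what eliminates this obstruction, and it requires starting from the subtracted second-order equation (and, in the paper's normalization, the symmetric variables $z$ and $Q$). Your identification of $\partial_t w\cdot\check u$ as a lower-order scalar is the right observation and mirrors the paper's use of $z\cdot\partial_t w=-w\cdot\partial_t z$ in treating $E_2$, but it is not sufficient on its own to close the $H^2$ estimate without the second-order structure.
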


\begin{cor}\label{cor:controldiff}
Let $u\, ,$ $\check u$ be two smooth solutions of \eqref{eq:schrodimap} with initial data $u^0$ and $\check u^0$ respectively.
Then for every $T>0$ and for every $t\in [-T,T]$
\begin{align*}
\big\|u-\check u \big\|_{H^1}^2 &\leq C\big(\ell,\|\partial_{sss}u^0\|_2,\|\partial_{sss}\check u^0\|_2,T \big)
\big\|u^0-\check u^0\big\|_{H^1}^2,\\
\big\|u-\check u \big\|_{H^2}^2 &\leq C\big(\ell,\|\partial_{sss}u^0\|_2,\|\partial_{sss}\check u^0\|_2,T \big)
\big\|u^0-\check u^0\big\|_{H^2}^2.
\end{align*}
\end{cor}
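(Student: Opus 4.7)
The plan is to derive Corollary \ref{cor:controldiff} from Lemma \ref{lem:difference} via Gronwall's inequality, after replacing the ``running'' $H^3$ norms that appear in the constants of that lemma by $T$-uniform bounds coming from Corollary \ref{cor:controlH3}.

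First I would invoke Corollary \ref{cor:controlH3} applied separately to $u$ and $\check u$: it gives a quantity $M \equiv M(\ell,\|\partial_{sss}u^0\|_2,\|\partial_{sss}\check u^0\|_2,T)$ such that $\|\partial_{sss}u(t,\cdot)\|_2+\|\partial_{sss}\check u(t,\cdot)\|_2 \le M$ for every $t\in[-T,T]$. Substituting this bound into the constants of Lemma \ref{lem:difference}, the three differential inequalities there hold on $[-T,T]$ with one and the same fixed constant $K=K(\ell,\|\partial_{sss}u^0\|_2,\|\partial_{sss}\check u^0\|_2,T)$.

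For the $H^1$ estimate, adding the first two inequalities of Lemma \ref{lem:difference} yields
\[
\frac{d}{dt}\bigl(\|w\|_2^2+\|\partial_s w\|_2^2\bigr) \le 2K\bigl(\|w\|_2^2+\|\partial_s w\|_2^2\bigr)
\]
on $[-T,T]$, so Gronwall's inequality gives $\|w(t,\cdot)\|_{H^1}^2\le e^{2K|t|}\|w^0\|_{H^1}^2$, which is the first claim.

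For the $H^2$ estimate, the third inequality of Lemma \ref{lem:difference} combined with Gronwall gives
\[
\|\partial_{ss}w(t,\cdot)\|_2^2+\|\partial_t w(t,\cdot)\|_2^2 \le e^{K|t|}\bigl(\|\partial_{ss}w^0\|_2^2+\|\partial_t w(0,\cdot)\|_2^2\bigr).
\]
It remains to bound $\|\partial_t w(0,\cdot)\|_2$ in terms of $\|w^0\|_{H^2}$. Using the equation, $\partial_t w(0,\cdot)=\partial_s(u^0\times \partial_s u^0)-\partial_s(\check u^0\times\partial_s\check u^0)=u^0\times\partial_{ss}u^0-\check u^0\times\partial_{ss}\check u^0$; writing $u^0\times\partial_{ss}u^0-\check u^0\times\partial_{ss}\check u^0=w^0\times\partial_{ss}u^0+\check u^0\times\partial_{ss}w^0$ and invoking the Sobolev embedding $H^1(T^1_\ell)\hookrightarrow L^\infty(T^1_\ell)$ together with $|\check u^0|\le 1$, we obtain $\|\partial_t w(0,\cdot)\|_2\le C(\ell)\|w^0\|_{H^1}\|\partial_{ss}u^0\|_2+\|\partial_{ss}w^0\|_2$. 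Thanks to Corollary \ref{cor:controlH2}, $\|\partial_{ss}u^0\|_2$ is itself controlled by $\|\partial_{sss}u^0\|_2$ (which dominates $\|\partial_s u^0\|_2$ by Poincaré–Wirtinger up to a constant depending on $\ell$ and the mean). Combining with the previously established $H^1$ estimate then gives $\|w(t,\cdot)\|_{H^2}^2\le C(\ell,\|\partial_{sss}u^0\|_2,\|\partial_{sss}\check u^0\|_2,T)\|w^0\|_{H^2}^2$.

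The only mildly delicate step is the $L^\infty$-to-$H^1$ passage used to bound $\|\partial_t w(0,\cdot)\|_2$; everything else is the mechanical application of Gronwall's lemma once the $H^3$ norms along the flow are frozen via Corollary \ref{cor:controlH3}.
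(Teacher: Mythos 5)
Your argument is the paper's own: freeze the constants in Lemma~\ref{lem:difference} via Corollary~\ref{cor:controlH3}, apply Gronwall's inequality, and reduce the missing term $\|\partial_t w(0,\cdot)\|_2$ to $\|w^0\|_{H^2}$ by decomposing $u^0\times\partial_{ss}u^0 - \check u^0\times\partial_{ss}\check u^0$ (the paper uses the symmetrized form $\tfrac12\bigl[(u^0-\check u^0)\times \partial_{ss}(u^0+\check u^0) + (u^0+\check u^0)\times \partial_{ss}(u^0-\check u^0)\bigr]$, which is algebraically identical to your $w^0\times\partial_{ss}u^0+\check u^0\times\partial_{ss}w^0$). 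One small misattribution: the bound $\|\partial_{ss}u^0\|_2 \le C(\ell)\|\partial_{sss}u^0\|_2$ follows from Poincar\'e--Wirtinger applied to the zero-mean function $\partial_{ss}u^0$, not from Corollary~\ref{cor:controlH2}, but this does not affect the validity of your proof.
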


Corollary \ref{cor:controldiff} implies that the restriction of the flow map to smooth initial data is Lipschtizian 
both for the $H^1$ and the $H^2$ norm, with a Lispchitz constant depending on the $H^3$ norm\footnote{It follows
from McGahagan \cite{McG2} that the Lipschitz constant of the flow map for the $H^1$ norm depends only on a bound 
on the $H^2$ norm (instead of the $H^3$ norm).  In particular this allows to claim uniqueness and Lipschitz
continuity of the flow map in $H^2$ for the $H^1$ norm. The proof in \cite{McG2} is more involved than the direct proof
of Corollary \ref{cor:controldiff} below, and the latter is sufficient for our needs here.}. By approximation, 
the same remains true for the restriction of the flow map to initial data in $H^3.$ In turn, this implies that
$E$ and $I$ are conserved quantities for initial data in $H^3$, and that Corollary \ref{cor:controlH2} and
\ref{cor:controlH3} hold for $u$ and $\check u$ in $H^3.$    

\smallskip

\noindent{\bf Proof of Lemma \ref{lem:conserved}.} This is direct differentiation.\qed

\smallskip

\noindent{\bf Proof of Corollary \ref{cor:controlH2}.}
Having in mind the conservation law for $I$, we first write
$$
\int_{T^1} \big|\partial_s u\big|^4\, ds \leq \int_{T^1} \big|\partial_s u^1\big|^4 +\big|\partial_s
u^3\big|^4+\big|\partial_s u^3\big|^4  \, ds, 
$$
and since $\partial_su^j$ has zero mean for each $j\in\{1,2,3\},$ the Gagliardo-Nirenberg inequality yields (see
Agueh \cite{Agu} to compute the optimal\footnote{And, in particular, ``some'' !} constant)
$$
\int_{T^1} \big|\partial_s u^j\big|^4 \leq \frac{1}{\sqrt{3}}\Big(\int_{T^1} \big|\partial_s u^j\big|^2\Big)^\frac32 \Big(\int_{T^1} \big|\partial_{ss}
u^j\big|^2\Big)^\frac12,
$$
so that after summation
$$
\int_{T^1} \big|\partial_s u\big|^4\, ds \leq \sqrt{3}  \Big(\int_{T^1} \big|\partial_s u\big|^2\Big)^\frac32 \Big(\int_{T^1} \big|\partial_{ss}
u\big|^2\Big)^\frac12.
$$
Hence, using the conservation law for $E$,  
\begin{equation*}\begin{split}
\big\| \partial_{ss}u(t,\cdot)\big\|_{2}^2 &\leq I(u) + \frac{3\sqrt{3}}{4} \big\|
\partial_{s}u(t,\cdot)\big\|_{2}^3 \big\| \partial_{ss}u(t,\cdot)\big\|_{2} \\
&\leq  I(u) + \frac{3^3}{2^5} \big\|
\partial_{s}u(t,\cdot)\big\|_{2}^6 + \frac{1}{2} \big\| \partial_{ss}u(t,\cdot)\big\|_{2}^2\\
&= I(u) + \frac{3^3}{2^5} \big\|
\partial_{s}u(0,\cdot)\big\|_{2}^6 + \frac{1}{2} \big\| \partial_{ss}u(t,\cdot)\big\|_{2}^2,
\end{split}
\end{equation*}
and we may absorb the last term on the right-hand side by the left-hand side. Since conversely
$$
I(u) = I(u(0,\cdot)) \leq \int_{T^1} \big|u(0,\cdot) \times \partial_{ss}u(0,\cdot)\big|^2 + \big|\partial_{ss}
u(0,\cdot)\big|^2\, ds \leq 2 \big\| \partial_{ss}u(0,\cdot)\big\|_{2}^2,
$$
the conclusion follows.
\qed

\smallskip

\noindent{\bf Proof of Lemma \ref{lem:controlH3}.} Differentiating \eqref{eq:schrodimap} with respect to $t$ and
using the identity $|u|\equiv1$ yields the equation
\begin{equation}\label{eq:degre2}
\partial_{tt}u + \partial_{ssss} u = -2 \partial_{ss}\left(|\partial_s u|^2 u\right) + \partial_s\left(
|\partial_su|^2\partial_s u\right).
\end{equation}
Differentiating  \eqref{eq:degre2} with respect to $s$, taking the $L^2$-scalar product of the resulting equation
by $\partial_{ts}u$ and integrating by parts leads to
\begin{equation}\label{eq:evol3d}
\frac{d}{dt} \int_{T^1} \frac{|\partial_{ts}{u}|^2}{2} + \frac{|\partial_{sss}{u}|^2}{2} \, ds \leq -2\int_{T^1}
\partial_{sss}\left(|\partial_s u|^2 u\right)\partial_{ts}u\, ds + \int_{T^1}\partial_{ss}\left(
|\partial_su|^2\partial_s u\right)\partial_{ts}u\, ds.
\end{equation}
We estimate the right-hand side of \eqref{eq:evol3d} by applying the chain-rule to the $\partial_{sss}$ and
$\partial_{ss}$ operators involved. Among the resulting terms, the only one that cannot be straightforwardly
bounded in absolute value by $C(\ell) \|\partial_{ss} u\|_2^2 \left(\|\partial_{sss}u\|^2_2 + \|\partial_{ts}u\|_2^2\right)$
using H\"older's inequality and Sobolev's embedding is
$$
D_1 := -2\int_{T^1} \partial_{sss}\left(|\partial_s u|^2\right) u \partial_{ts}u\, ds.
$$  
We integrate by parts
$$
D_1 = 2\int_{T^1} \partial_{ss}\left(|\partial_s u|^2\right)\partial_s\left( u \partial_{ts}u\right)\, ds
$$
and, in view of the equality $0 = \partial_t|u|^2 = u\partial_t u$,  we develop
$$
\partial_s\left( u \partial_{ts}u\right)= \partial_s u \partial_{ts}u + u\partial_{tss}u  = \partial_s u
\partial_{ts}u + u\partial_{tss}u - \partial_{ss}(u\partial_t u) 
= -\partial_s u \partial_{ts} u - \partial_{ss}u \partial_t u
$$
so that
$$
D_1 = -2\int_{T^1} \partial_{ss}\left(|\partial_s u|^2\right)\left(  \partial_s u \partial_{ts} u + \partial_{ss}u \partial_t u
  \right)\, ds
$$
and here again H\"older's inequality and Sobolev's embedding yield the claim. \qed 

\smallskip
\noindent{\bf Proof of Corollary \ref{cor:controlH3}.} This follows from Lemma \ref{lem:controlH3}, Corollary
\ref{cor:controlH2} and the inequality
$$
\|\partial_{ts}u(0,\cdot)\|_2^2 = \|\partial_s u^0 \times \partial_{ss} u^0 + u^0\times \partial_{sss} u^0\|_2^2
\leq C(\ell) \|\partial_{ss}u^0\|^4_2  + 2 \|\partial_{sss}u^0\|^2_2,
$$
where we used Sobolev's embedding.\qed

\smallskip

\noindent{\bf Proof of Lemma \ref{lem:difference}.}
We subtract \eqref{eq:schrodimap} for $u$ and $\check u$, and then also \eqref{eq:degre2} for $u$ and $\check u.$
This yield
\begin{equation}\label{eq:diff01}
\partial_t w = w \times \partial_{ss}z + z\times \partial_{ss} w,
\end{equation}
and  
\begin{equation}\label{eq:diff02}
\partial_{tt} w + \partial_{ssss} w = -2\partial_{ss}\left( Q
w\right) -4 \partial_{ss}\left( (\partial_sz\cdot \partial_s w) z\right) + \partial_s\left( Q
 \partial_s w\right) + 2\partial_s\left( (\partial_s z\cdot\partial_s w) \partial_s z\right),
\end{equation}
where $z:= (u+\check u)/2$ and $Q:= \frac{|\partial_s u|^2+|\partial_s \check u|^2}{2}.$
Multiplying \eqref{eq:diff01} by $w$ and integrating by parts on $T^1$ we obtain
$$
\frac{d}{dt}\int_{T^1}\frac{|w|^2}{2} = \int_{T^1} (w \times z)\cdot \partial_{ss} w = - \int_{T^1} (w\times
\partial_{s} z)\cdot \partial_s w
$$
so that H\"older's inequality and Sobolev's embedding yield the desired claim at the $L^2$ level. Next, 
differentiating \eqref{eq:diff01} with respect to $s$, multiplying the resulting equation by $\partial_s w$ and integrating by
parts on $T^1$ we obtain, after some cancellation, 
$$
\frac{d}{dt}\int_{T^1}\frac{|\partial_sw|^2}{2} = \int_{T^1} (w \times \partial_{sss}z)\times \partial_s w,
$$ 
and the desired claim at the $H^1$ level follows. Finally, multiplying \eqref{eq:diff02} by $\partial_t w$ and
integrating by parts on $T^1$ yields
$$
\frac{d}{dt}\int_{T^1}\left( \frac{|\partial_{ss}w|^2}{2}+ \frac{|\partial_tw|^2}{2}\right) =-2 E_1 -4E_2 +E_3+2E_4,  
$$
where
$$
\begin{array}{ll}
E_1:= \int_{T^1}\partial_{ss}\left( Q
w\right)\partial_t w,   & E_2:= \int_{T^1}\partial_{ss}\left( (\partial_sz\cdot \partial_s w) z\right)  \partial_t
w,\\  
 E_3:=\int_{T^1}\partial_s\left( Q
 \partial_s w\right) \partial_t w,
& E_4:= \int_{T^1}\partial_s\left( (\partial_s z\cdot\partial_s w) \partial_s z\right)\partial_t w.
\end{array}
$$
The terms $E_1,$ $E_3$ and $E_4$  can straightforwardly be treated by H\"older's inequality and Sobolev's embedding.  We
expand $\partial_{ss}$ in  $E_2$ through the chain-rule as
$$
E_2 = \int_{T^1} (\partial_{sss}w \partial_s z)(z\partial_t w) + E_{2,1} + E_{2,2}
$$ 
and once more $E_{2,1}$ and $E_{2,2}$ are  treated by H\"older's inequality and Sobolev's embedding. Finally,
since $0=|u|^2-|\check u|^2$, we have $z \partial_t w = -w\partial_t z$ and therefore
$$
\int_{T^1} (\partial_{sss}w \partial_s z)(z\partial_t w) = - \int_{T^1} (\partial_{sss}w \partial_s z)(w\partial_t
z) = \int_{T^1} \partial_{ss}w\partial_s \left( \partial_s z (w \partial_tz)\right),
$$  
which is now in a suitable form to be estimated as above. This yields the desired claim at the $H^2$ level. 
\qed

\smallskip

\noindent{\bf Proof of Corollary \ref{cor:controldiff}.}
It follows directly from Lemma \ref{lem:difference}, Corollary \ref{cor:controlH3} and the fact that
\begin{equation*}\begin{split}
\|\partial_t u(0,\cdot) -\partial_t \check u(0,\cdot)\|_2^2 &= \frac{1}{4} \|(u^0-\check u^0)\times \partial_{ss}(u^0+\check
u^0) + (u^0+\check u^0)\times \partial_{ss}(u^0-\check u^0) \|_2^2\\ 
&\leq
C(\ell,\|\partial_{sss}u^0\|_2,\|\partial_{sss}\check u^0\|_2) \|u^0-\check u^0\|_{H^2}^2.
\end{split}\end{equation*}
\qed

\medskip

\noindent{\bf Addresses and E-mails: }

\smallskip
\noindent
Robert Jerrard. Department of Mathematics, University of Toronto, Toronto, Ontario M5S 2E4, Canada. E-mail:
{\tt rjerrard@math.utoronto.ca}

\smallskip

\noindent
Didier Smets. Laboratoire Jacques-Louis Lions, Universit\'e Pierre \& Marie Curie, 4 place Jussieu BC 187, 75252
Paris Cedex 05, France. E-mail: {\tt smets@ann.jussieu.fr}

\end{document}